\documentclass{article}
\usepackage[utf8]{inputenc}

\usepackage[left=1.25in, right=1.25in, top=1in, bottom=1in]{geometry}
\usepackage{amsmath}
\usepackage{amsfonts}
\usepackage{amssymb}
\usepackage{amsthm}
\usepackage{mathtools}
\usepackage[pdfencoding=auto]{hyperref}
\usepackage{xcolor}
\usepackage{graphicx}
\usepackage{caption}
\usepackage{subcaption}
\usepackage{float}
\usepackage{bbm}
\usepackage{bm}
\usepackage{caption}
\usepackage{subcaption}
\usepackage{flafter}
\usepackage{ifthen}
\usepackage{multirow}
\usepackage{tikz-cd}

\usepackage{etoolbox}
\tracingpatches
\makeatletter
\patchcmd{\@setref}{\bfseries ??}{\bfseries\color{red} label}{}{}
\makeatother

\hypersetup{colorlinks=true,
	linkcolor=blue,
	filecolor=magenta,      
	urlcolor=cyan,
	citecolor=teal}

\renewcommand{\qedsymbol}{${}_\blacksquare$}

\newtheorem{defi}{Definition}[section]
\theoremstyle{definition}
\newtheorem{theorem}[defi]{Theorem}
\newtheorem*{theorem*}{Theorem}
\newtheorem{lemma}[defi]{Lemma}
\newtheorem{conj}[defi]{Conjecture}
\newtheorem{coro}[defi]{Corollary}

\newtheorem{examples}[defi]{Examples}

\newcommand{\Ggraph}[2]{\mathcal{G}_{#1}\left(#2\right)}
\renewcommand{\hom}{\text{Hom}}
\renewcommand{\epsilon}{\varepsilon}
\newcommand{\R}{\mathbb{R}}
\newcommand{\Z}{\mathbb{Z}}
\renewcommand{\P}[1]{\mathbb{P}\left[#1\right]}
\newcommand{\diam}[1]{\text{diam}\left(#1\right)}

\newcommand{\pois}[1]{\text{Pois}\left(#1\right)}
\newcommand{\calB}{\mathcal{B}}
\newcommand{\calC}{\mathcal{C}}
\newcommand{\calO}{\mathcal{O}}
\renewcommand{\S}{\mathbb{S}}

\newcommand{\N}{\mathbb{N}}
\newcommand{\Q}{\mathbb{Q}}

\renewcommand{\phi}{\varphi}
\newcommand{\cov}[2]{\text{cov}_{#1}\left(#2\right)}
\newcommand{\ind}[2]{\text{ind}_{#1}\left(#2\right)}
\newcommand{\interior}[1]{\text{int}\left(#1\right)}
\newcommand{\1}{\mathbbm{1}}
\renewcommand{\bar}{\overline}

\newcommand{\dist}[2][]{%
	\ifthenelse{ \equal{#1}{} }
	{\mathbbm{d}\left(#2\right)}
	{\mathbbm{d}_{#1}\left(#2\right)}
}

\newcommand{\vol}[2][]{\text{Vol}_{#1}\!\left(#2\right)}

\pdfstringdefDisableCommands{\renewcommand*{\bm}[1]{#1}}

\title{Generalized Borsuk Graphs}
\author{Francisco Martinez-Figueroa}
\date{\today}

\begin{document}
	
	\maketitle
	
	\begin{abstract}
		Given a finite group $G$ acting freely on a compact metric space $M$, and $\epsilon>0$, we define the $G$-Borsuk graph on $M$ by drawing edges $x\sim y$ whenever there is a non-identity $g\in G$ such that $\dist{x,gy}\leq\epsilon$. We show that when $\epsilon$ is small, its chromatic number is determined by the topology of $M$ via its $G$-covering number, which is the minimum $k$ such that there is a closed cover $M=F_1\cup\dots\cup F_k$ with $F_i\cap g(F_i)=\emptyset$ for all $g\in G\setminus\{\1\}$. We are interested in bounding this number. We give lower bounds using $G$-actions on Hom-complexes, and upper bounds using a recursive formula on the dimension of $M$. We conjecture that the true chromatic number coincides with the lower bound, and give computational evidence. We also study random \mbox{$G$-Borsuk} graphs, which are random induced subgraphs. For these, we compute thresholds for $\epsilon$ that guarantee that the chromatic number is still that of the whole $G$-Borsuk graph. Our results are tight (up to a constant) when the $G$-index and dimension of $M$ coincide.  
	\end{abstract}
	
	\section{Introduction}
	
	Given $\epsilon>0$ and $d\geq 1$, the Borsuk graph $\text{Bor}^d(\epsilon)$ is the infinite graph with vertex set the unit $d$-sphere $\S^d$ and edges $x\sim y$ whenever $\dist[\S^d]{x,y}\geq\pi-\epsilon$, that is, if the two points are $\epsilon$-almost antipodal. It is well known that when $\epsilon$ is sufficiently small, its chromatic number is $d+2$, which follows from the topology of $\S^d$ via Borsuk-Ulam’s Theorem, and an explicit $(d+2)$ coloring given by projecting the facets of an inscribed regular $(d+1)$-simplex (see e.g. \cite{Matousek2008,Kahle-Martinez2020}). 
	
	The Borsuk graph has been studied in relation to Borsuk's conjecture and distance graphs (see e.g. \cite{Raigorodskii2012, Prosanov-Raigorodskii-Sagdeev2017,Sagdeev18,Barg2014}). It is also mentioned in Lov\'asz's proof of Kneser's conjecture \cite{Lovasz1978} where he initiated the study of topological obstructions to the chromatic number of graphs. This study has been continued, among others, by Babson's and Kozlov's work on the topology of Hom-complexes of graphs \cite{BabsonKozlov2003,BabsonKozlov2006,Kozlov2008}. In \cite{Kahle2007}, Kahle computed that these topological lower bounds are not efficient for the chromatic number of Erd\H{o}s--R\'enyi random graphs. In contrast, in \cite{Kahle-Martinez2020} we showed that the topology of the underlying $d$-sphere still dictates the chromatic number of random Borsuk graphs, and we described the threshold of $\epsilon$ for this to happen. 
	
	Regarding the antipodal map on a sphere as an action of the cyclic group of order two $\Z_2=\{\1,\nu\}$, we can equivalently define the Borsuk graph $\text{Bor}^d(\epsilon)$ as the graph with vertices $\S^d$, and edges $x\sim y$ whenever $\dist[\S^d]{x,\nu(y)}\leq\epsilon$.  This formulation naturally generalizes the Borsuk graphs to any metric space where a group $G$ acts. Thus, if $G$ is a group acting on a metric space $M$, and $\epsilon>0$ is given, the $G$-Borsuk graph on $M$, denoted $\Ggraph{G}{M,\epsilon}$, is the graph with vertex set $M$, and edges $x\sim y$ whenever there exist $g\in G, g\neq\1$ s.t.\ $\dist[M]{x,gy}\leq\epsilon$. In this paper, we study the chromatic number of such $G$-Borsuk graphs, when $G$ is a finite group and the underlying space $M$ is finitely triangulable.  
	
	Just as with regular Borsuk graphs, the chromatic number of $G$-Borsuk graphs is determined by the topology of the underlying space, however accurately bounding these requires more work. To get lower bounds, we study the corresponding hom-complexes as $G$-spaces, applying standard tools for $G$-spaces as described in \cite[Ch. 6]{Matousek2008}. We also establish some upper bounds, however this are far away from the lower bounds.  Computer-aided examples suggest that the topological lower bounds might actually tight. 
	
	We also generalize the random Borsuk graphs studied in \cite{Kahle-Martinez2020}: a random $G$-Borsuk graph on $M$, is the induced subgraph by taking $n$ uniform i.i.d. random points on $M$. For these, we exhibit threshold for $\epsilon$ such that asymptotically almost surely the chromatic number of the random subgraph coincides with that of the whole $G$-Borsuk graph.
	
	This paper is organized as follows. In section 2, we summarize the necessary background on $G$-spaces and $G$-simplicial complexes needed for the following sections. We also recall the definition of graph homomorphisms, and describe some conventions that we use throughout this paper. In section 3, we summarize our definitions and theorems about $G$-Borsuk graphs. In section 4, we prove the topological lower bound for the chromatic number. In section 5, we prove our upper bound. In section 6, we study the random $G$-Borsuk graphs. Finally, we include an appendix where we describe our approach to produce computer-aided examples.

	\section{Background}
	
	In this section we recall the standard definitions and properties of $G$-spaces and $G$-simplicial complexes, including $E_dG$ spaces and the $G$-index. Our exposition follows very closely that of Matou\v{s}ek in \cite[Ch. 6]{Matousek2008}. We also include the definition of graph homomorphisms and their relation to graph colorings (see more at \cite[Ch. 1]{Hell-Nesetril2004}). In addition, we make some useful conventions for the remainder of this paper. 
	
	\begin{defi}[$G$-spaces]
		Given a finite group $G$, a topological space $M$, is a \textbf{$\bm{G}$-space} if $G$ acts on $M$ via continuous maps. We say it is a \textbf{free $\bm{G}$-space} if the $G$-action is \textbf{free}, that is, if for all $g\in G$, $g\neq\1$ the map $g:M\to M$ has no fixed points. 
	\end{defi}
	
	\begin{defi}[Simplicial $G$-spaces] Let $G$ be a finite group.
		\begin{itemize}
			\item A simplicial complex $K$ is a \textbf{$\bm{G}$-simplicial complex} if $G$ acts on $K$ via simplicial maps. We say the action is \textbf{free}, if for every face $\sigma\in K$ and every $g\in G$ with $g\neq\1$, we have $\sigma\cap g\sigma=\emptyset$. 
			\item Moreover, a geometric realization of $K$, is a \textbf{geometric $\bm{G}$-simplicial complex}, if $G$ acts on it via linear extensions of simplicial maps. In this case, we consider $\|K\|$ as a metric space with the shortest path distance. Note that the $G$-action is indeed continuous with this metric. (Here, and throughout this paper, $\|\cdot\|$ denotes a fixed geometric realization). 
		\end{itemize}
	\end{defi} 
	
	\begin{center}
		\textbf{In this paper: we will only consider \textbf{free} $\bm{G}$-spaces and \textbf{free} $\bm{G}$-simplicial complexes, even if we don't say this explicitly.} 
	\end{center}	
	Note that $K$ being a free $G$-simplicial complex is a stronger condition than just asking it to be free on the geometric realization. However, if $K$ is a compact triangulable space and $G$ acts on it freely, we can always find a small enough triangulation such that the simplicial approximation theorem produces a free $G$-simplicial complex.
	
	We will mostly focus on compact metric $G$-spaces where the actions are via Lipschitz maps or isometries. Note that when $K$ is a geometric $G$-simplicial complex,  $G$ acts on it via piece-wise linear maps. Thus, if $K$ has a finite number of faces, the $G$-action is automatically Lipschitz. 
	
	\begin{defi}[$G$-maps]
		Let $G$ be a finite group. 
		\begin{enumerate}
			\item Let $X$ and $Y$ be $G$-spaces. A map $\phi:X\to Y$ is a \textbf{$\bm{G}$-map} if $\phi(g(x))=g(\phi(x))$ for all $x\in X$, and all $g\in G$. 
			\item Let $K$ and $M$ be $G$-simplicial complexes. A simplicial map $\phi:K\to M$ is a $G$-map if $\phi(gv)=g\phi(v)$ for all $v\in V(K)$ and all $g\in G$.  
		\end{enumerate} 
	\end{defi}
	
	\noindent\textbf{Notation:} In both cases we denote that $\phi$ is a $G$-map by writing $\phi:X\xrightarrow{G}Y$. We will also write just $X\xrightarrow{G} Y$, to mean that there exists some $G$-map from $X$ to $Y$. It is straightforward to check that composition of $G$-maps produces a $G$-map and that the identity is trivially a $G$-map. More generally, $G$-spaces with $G$-maps form a category, and (geometric) $G$-simplicial complexes with $G$-maps form a category as well.\\
	
	When $G=\Z_2$, the canonical example of a $\Z_2$-space is the $d$-sphere $\S^d$ with the antipodal map. For examples of $\Z_2$-simplicial complexes, we can take any triangulation of $\S^d$ that is symmetrical with respect to the origin. In particular, the cross-polytopes $\diamond^d$ provide a family of such examples. Having one canonical $\Z_2$-space for each dimension, makes the spheres the perfect benchmark for comparing with other $\Z_2$-spaces. The idea of \textit{benchmark spaces} can be extended to other groups via the finite classifying spaces $E_dG$, here again we follow the notation of \cite{Matousek2008}. 
	
	\begin{defi}[Finite Classifying Spaces]\label{defi:E_d_G}
		Let $G$ be a finite group, and $d\geq0$. A \textbf{finite classifying space of dimension} $\bm{d}$, denoted $E_dG$, is 
		\begin{enumerate}
			\item a finite free $G$-simplicial complex,
			\item $d$-dimensional, and
			\item $(d-1)$-connected
		\end{enumerate}
	\end{defi}
	
	This definition doesn't rule out the existence of different $E_dG$ classifying spaces for the same group $G$ and dimension $d$. However, they are always \textit{$G$-equivalent} in the sense that if both $X$ and $Y$ are $E_dG$ spaces, then we always have $\|X\|\xrightarrow{G}Y$ and $\|Y\|\xrightarrow{G}X$ \cite[Lemma 6.2.2 ]{Matousek2008}. Let us give some useful examples of $E_dG$ spaces. 
	
	\begin{examples}\label{ex:EdG_spaces}\leavevmode
		\begin{enumerate}
			\item For $G=\Z_2$, any symmetrical triangulation of $\S^d$ is a $E_d\Z_2$ space. In particular, the $d$-dimensional cross-polytope $\diamond^d$, which can be described via simplicial joins as $$\diamond^d=(\S^0)^{*(d+1)}=\underbrace{\S^0*\cdots*\S^0}_{d+1\text{ times}}.$$ 
			\item By some abuse of notation, we can regard a finite group $G$ as a $0$-dimensional simplicial complex with its vertices labeled by the elements of the group. Then, for any $d\geq 0$, $G$ acts freely on the space $G^{*(d+1)}$ via left-multiplication on each coordinate of the simplicial join. By elementary properties of the simplicial join operation, $G^{*(d+1)}$ is $(d-1)$-connected and $d$-dimensional, so it is an $E_dG$ classifying space. Also, by the symmetries of this construction, it can be seen that it can be realized by a geometric simplicial complex where $G$ acts via isometries. 
			\item Every cyclic group $\Z_m$ acts freely on $\S^1$ via $(2\pi)/m$ rotations. Since $\S^1$ is 1-dimensional and connected, it gives a $E_1\Z_m$ space. 
			\item Following the previous item, each cyclic group $\Z_m$ acts freely on the space $\Z_m*\S^1$, which is simply-connected and 2-dimensional, so it is a $E_d\Z_m$ space. We can understand this space as a collection of $m$ cones over the same $\S^1$. Pictorially, we represent these spaces by showing each $v*\S^1$ as an independent disk, where the boundaries must be identified. We use this depiction in Figure \ref{fig:upper_bounds}, where we draw the disks at the vertices of a regular $m$-gon, so that the $\Z_m$ action can be observed by rotating the picture by multiples of $2\pi/m$. 
		\end{enumerate}
	\end{examples}
	
	Comparing any $G$-space with the finite classifying spaces, the $G$-index is defined as follows. 
	
	\begin{defi}[$G$-index]
		For a $G$-space $M$, its \textbf{$\bm{G}$-index} is defined as		
		$$\ind{G}{M}:=\min\{d: M \xrightarrow[]{G} E_dG\}$$
	\end{defi}
	
	We summarize some properties of the $G$-index. 
	
	\begin{theorem}[{\cite[Prop. 6.2.4]{Matousek2008}}]\label{thm:properties-G-index}\leavevmode
		\begin{enumerate}
			\item If $X\xrightarrow{G} Y$ then $\ind{G}{X}\leq\ind{G}{Y}$.
			\item $\ind{G}{E_dG}=d$ for all $d\geq 0$. 
			\item $\ind{G}{X*Y}\leq \ind{G}{X}+\ind{G}{Y}+1$.
			\item If $X$ is $k$-connected, then $\ind{G}{X}\geq k+1$. 
			\item If $K$ is a free simplicial $G$-complex of dimension $d$, then $\ind{G}{X}\leq d$. 
		\end{enumerate}
	\end{theorem}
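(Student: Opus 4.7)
The plan is to prove the five parts in the order $(1), (5), (4), (2), (3)$, so that later parts may invoke earlier ones. Part (1) is immediate: if $X \xrightarrow{G} Y \xrightarrow{G} E_d G$, then composing the two $G$-maps gives $X \xrightarrow{G} E_d G$, so any $d$ witnessing $\ind{G}{Y} \leq d$ also witnesses $\ind{G}{X} \leq d$. For (5), I would build a $G$-map $K \to E_d G$ by equivariant induction on skeleta. On vertices, pick one representative from each $G$-orbit of $V(K)$, send it to an arbitrary vertex of $E_d G$, and extend equivariantly; freeness of the $G$-action on $K$ ensures no conflict arises. Inductively, given the map on $K^{(j)}$ with $j < d$, choose a representative $(j+1)$-simplex $\sigma$ per orbit; its boundary is already mapped to a $j$-sphere in $\|E_d G\|$, which bounds because $E_d G$ is $(d-1)$-connected and $j \leq d-1$. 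A simplicial approximation of a chosen nullhomotopy extends the map over $\sigma$, and the $G$-action transports this extension across the orbit.

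Part (4), which I expect to be the main obstacle, is Dold's theorem: if $X$ is $k$-connected and $Y$ is a free $G$-CW complex of dimension at most $k$, then no $G$-map $X \to Y$ exists. Applied with $Y = E_k G$, this yields $X \not\xrightarrow{G} E_k G$, hence $\ind{G}{X} \geq k+1$. The standard proof uses equivariant obstruction theory: the primary obstruction to building a $G$-map $X \to E_k G$ lives in an equivariant cohomology group of degree $k+1$ with coefficients in $\pi_k(E_k G)$, and $k$-connectedness of $X$ combined with $E_k G$ having dimension exactly $k$ makes this obstruction unkillable. Granting (4), part (2) is immediate: the identity gives $\ind{G}{E_d G} \leq d$, while $(d-1)$-connectedness of $E_d G$ combined with (4) gives $\ind{G}{E_d G} \geq d$.

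Finally, for (3), set $a = \ind{G}{X}$ and $b = \ind{G}{Y}$, pick $G$-maps $X \xrightarrow{G} E_a G$ and $Y \xrightarrow{G} E_b G$, and take their join to obtain $X * Y \xrightarrow{G} E_a G * E_b G$. With the diagonal $G$-action, the target is a free $G$-simplicial complex of dimension $a + b + 1$, so by (5) it admits a $G$-map to $E_{a+b+1} G$; composing with (1) yields $\ind{G}{X * Y} \leq a + b + 1$. The real work is concentrated in (4); the other parts are either formal, routine equivariant extension, or an application of the join construction, while the lower bound on the index of a highly connected space genuinely requires equivariant cohomology.
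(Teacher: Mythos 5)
The paper does not prove this theorem; it cites it directly as~\cite[Prop.~6.2.4]{Matousek2008}. So the right comparison is against the standard textbook argument, which is essentially what you give, up to a permutation of which part carries the hard content. Matou\v{s}ek's route is to first establish (2) (the generalized Borsuk--Ulam statement ``no $G$-map $E_{n}G\to E_{n-1}G$'') by a cohomological argument, and then deduce (4) from (2) together with the fact that a $k$-connected free $G$-space receives a $G$-map from $E_{k+1}G$ by equivariant skeletal extension. You instead take Dold's theorem (4) as the primitive and deduce (2) from it; these are logically interchangeable, since Dold's theorem applied to $X=E_{n}G$, $Y=E_{n-1}G$ is exactly the nonexistence statement in (2). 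Parts (1), (3), and (5) are routine and your arguments are correct: (1) is composition; (5) is the equivariant skeletal induction you describe, where freeness lets you define on orbit representatives and the needed extension over a $(j{+}1)$-cell uses $\pi_j(E_dG)=0$ for $j\le d-1$; and (3) correctly joins $G$-maps to land in $E_aG*E_bG$, a free $G$-complex of dimension $a+b+1$ (freeness of a join of free complexes is immediate on the vertex level), then invokes (5) and (1).

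The one place to be careful is the gloss on Dold's theorem. Citing Dold is exactly the right move, but the phrase ``the primary obstruction \ldots makes this obstruction unkillable'' misdescribes the logic: obstruction theory governs whether an extension exists, and to rule out \emph{all} $G$-maps one must show the obstruction cocycle is a nonzero cohomology class for \emph{every} choice of map on the $k$-skeleton, which is a genuine computation, not an automatic consequence of $E_kG$ having dimension $k$. The cleaner standard proof passes to quotients: a $G$-map $X\to E_kG$ descends to $X/G\to (E_kG)/G$; $k$-connectedness of $X$ makes the classifying map $X/G\to BG$ a $(k{+}1)$-equivalence, so $H^{k+1}(BG;M)\hookrightarrow H^{k+1}(X/G;M)$ for any coefficients, yet this map factors through $H^{k+1}\bigl((E_kG)/G;M\bigr)=0$ because $(E_kG)/G$ is a $k$-dimensional complex, contradicting the nonvanishing of $H^{k+1}(BG;M)$ for a suitable $M$ (which holds for every $k\ge 0$ whenever $G$ is a nontrivial finite group). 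This is only an expository point, not a gap in your plan: you correctly isolate (4) as the place where a Borsuk--Ulam-type input is indispensable, and everything else you write is sound.
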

	
	Note item 2 is a generalization of Borsuk--Ulam's Theorem, and guarantees the $G$-index is well defined.
	
	Throughout this paper whenever we say $H$ is a graph, we mean it is a finite, simple, loop-less graph. That is, its vertex set $V(H)$ is a finite set, and its edges $E(H)$ are unordered pairs of vertices. We will also use the notation $u\sim v$ between vertices of $H$, to represent the edge $\{u,v\}\in E(H)$. We will denote the complete graph on $m$ vertices by $K_m$. The next couple of definitions are meant as a quick review of graph homomorphisms and colorings, for a more detailed exposition see \cite[Ch. 1]{Hell-Nesetril2004}. 
	
	\begin{defi} Given two graphs $H_1$ and $H_2$, a \textbf{graph homomorphism} from $H_1$ to $H_2$ is a function $f:V(H_1)\to V(H_2)$ defined on their vertices such that whenever $u\sim v$ in $H_1$ then also $f(u)\sim f(v)$ in $H_2$. 
	\end{defi}
	
	We can thus understand graph homomorphisms as functions that send vertices to vertices and edges to edges. We will denote them as $f:H_1\to H_2$. Again, we may just write $H_1\to H_2$ to mean that there exists some graph homomorphism from $H_1$ to $H_2$. It is known that the collection of graphs with graph homomorphisms form a category (see \cite{Kozlov2008,Hell-Nesetril2004}). 
	
	Given two graphs $H_1$ and $H_2$, we will denote the set of graph homomorphisms from $H_1$ to $H_2$ as $\hom(H_1,H_2)$. Babson and Kozlov \cite{BabsonKozlov2003,BabsonKozlov2006} introduced a canonical way of endowing $\hom(H_1,H_2)$ with a topological structure. We will discuss more about it in section \ref{section:lower_bound}.
	
	Graph colorings and chromatic number can also be described via graph homomorphisms. 	
	\begin{defi}
		Given a graph $H$, an \textbf{m-coloring} is a graph homomorphism $\phi:H\to K_m$. The \textbf{chromatic number} of $H$, is the least $m$ such that an $m$-coloring exits, i.e. $$\chi(H):=\min\{m:\hom(H,K_m)\neq\emptyset\}.$$
	\end{defi}
	
	\section{Definitions and Results}
	
	We start by giving a definition that generalizes Borsuk graphs on spheres to any metric $G$-space. 
	
	\begin{defi}[$G$-Borsuk Graphs]
		Let $G$ be a finite group. For a metric $G$-space $M$, $X\subset M$, and $\epsilon>0$ we define the \textbf{$\bm{G}$-Borsuk Graph} $\Ggraph{G}{X,\epsilon}$ as the graph with vertices $V(\Ggraph{G}{X,\epsilon})=X$ and edges $x\sim y$ whenever there is a non-identity element $g\in G$ such that $\dist[M]{x,gy}\leq\epsilon$.
	\end{defi}
	
	Thus the $\Z_2$-Borsuk Graph $\Ggraph{\Z_2}{\S^d,\epsilon}$, where $\Z_2$ acts on $\S^d$ via the antipodal map, is \textit{the} Borsuk Graph on $\S^d$. It is well known that the chromatic number of the $\Z_2$-Borsuk Graph on the $d$-Sphere is $d+2$, which follows from Borsuk--Ulam's Theorem (see e.g. \cite{Matousek2008}). More specifically, it depends on Lyusternik--Schnirerlman's Theorem, which is equivalent to Borsuk--Ulam's, concerning the minimum number of open sets covering $\S^d$ s.t.\ they don't contain pairs of antipodal points. In this spirit, we define the $G$-covering number of a $G$-metric space, and study its relation with $G$-Borsuk graphs. 
	
	\begin{defi}[$G$-covering number]
		Let $G$ be a finite group. Let $M$ be a free $G$-space. Then a (open)\textbf{ $\bm{G}$-cover} of $M$ is an open cover $M=U_1\cup\dots\cup U_k$, such that $U_i\cap gU_i=\emptyset$ for all $i=1, \dots, k$ and all $g\in G, g\neq\1$. Then the (open) \textbf{$\bm{G}$-covering number} of $M$ is:%
		$$\cov{G}{M}=\min\{k:\text{ there exists a } G\text{-cover } M=U_1\cup\dots\cup U_k\} $$
		Similarly, we define closed $G$-covers and the \textbf{closed $\bm{G}$-covering number} of $M$, $\overline{\cov{G}{M}}$, using closed covers instead.
	\end{defi}
	
	\paragraph{Notation:} Theorem \ref{thm:properties_cov} will show that the open and closed covering numbers coincide for compact spaces, and that they are invariant under $G$-equivalence. Since all $E_dG$ spaces are $G$-equivalent, the $G$-covering number $\cov{G}{E_dG}$ is independent of the specific classifying space, so we will denote it simply as $\cov{G}{d}$. 
	
	The $G$-covering number the chromatic number of $G$-Borsuk graphs on the same space are related via the next result. 
	
	\begin{theorem}\label{thm:rel_graph_cov_intro} If $M$ is a compact metric space and $G$ acts via Lipschitz maps. Then there exist constants $D$ and $\epsilon_0$ such that for any $0<\epsilon<\epsilon_0$ and any $X\subset M$ a $(\epsilon/D)$-net of $M$, we have $$\chi\left(\Ggraph{G}{X,\epsilon}\right)=\cov{G}{M}.$$
	\end{theorem}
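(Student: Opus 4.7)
The plan is to prove the two inequalities $\chi(\Ggraph{G}{X,\epsilon}) \leq \cov{G}{M}$ and $\chi(\Ggraph{G}{X,\epsilon}) \geq \cov{G}{M}$ separately, invoking Theorem~\ref{thm:properties_cov} to identify the open and closed $G$-covering numbers so that it suffices to work with closed covers. Two positive geometric quantities dictate the constants: $\epsilon_1 := \min_{i,\, g\neq\1}\dist{F_i, gF_i}$ for a fixed optimal closed $G$-cover $\{F_i\}_{i=1}^k$ with $k=\overline{\cov{G}{M}}$, and $\delta := \min_{x\in M,\, g\neq\1}\dist{x, gx}$. Both are strictly positive because $M$ is compact, $G$ is finite, the $F_i$ and $gF_i$ are disjoint closed sets, and the action is free. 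Let $L$ be a uniform Lipschitz constant for all maps $g \colon M \to M$, $g \in G$; I would set $\epsilon_0 := \min(\epsilon_1, \delta)$ and $D := L+2$.

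For the upper bound, given any $\epsilon < \epsilon_1$ and any $X \subset M$, assign each $x \in X$ a color $c(x) \in \{1,\dots,k\}$ with $x \in F_{c(x)}$. If $x \sim y$ in $\Ggraph{G}{X,\epsilon}$ there is $g \neq \1$ with $\dist{x,gy}\leq\epsilon<\epsilon_1$; if also $c(x)=c(y)=i$, then $x\in F_i$ and $gy\in gF_i$ would lie closer than $\dist{F_i,gF_i}\geq\epsilon_1$, impossible. Hence $c$ is a proper $k$-coloring.

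For the lower bound, given $\epsilon<\delta$, an $(\epsilon/D)$-net $X\subset M$, and a proper $m$-coloring $c\colon X\to\{1,\dots,m\}$ of $\Ggraph{G}{X,\epsilon}$, define the closed sets
$$F_i := \bigl\{ x \in M : \dist{x, c^{-1}(i)} \leq \epsilon/D \bigr\},$$
which cover $M$ by the net condition. If $x\in F_i\cap gF_i$ for some $g\neq\1$, then using $D>L+1$ choose $\eta>0$ with $(1+L)(\epsilon/D+\eta)\leq\epsilon$, and pick $y_1,y_2\in c^{-1}(i)$ with $\dist{x,y_1}, \dist{g^{-1}x,y_2}\leq\epsilon/D+\eta$. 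Then
$$\dist{y_1, gy_2} \leq \dist{y_1, x} + L\dist{g^{-1}x, y_2} \leq (1+L)(\epsilon/D+\eta) \leq \epsilon.$$
If $y_1=y_2$ this contradicts $\epsilon<\delta$; if $y_1\neq y_2$ it forces the edge $y_1\sim y_2$ between same-colored vertices. Either way $\{F_i\}$ is a closed $G$-cover of size $m$, so $m\geq\overline{\cov{G}{M}}=\cov{G}{M}$.

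The main subtlety I anticipate is pushing the Lipschitz thickening argument through uniformly: choosing $D$ strictly greater than $L+1$ leaves the small margin $\eta$ needed to realize the distance-to-color-class by actual net points even when $X$ is infinite, whereas the borderline choice $D=L+1$ would require an additional reduction to a finite subnet. The remaining work is the standard extraction of the two positive constants $\epsilon_1$ and $\delta$ from compactness of $M$ together with freeness of an action by a finite group.
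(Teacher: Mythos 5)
Your proof is correct and uses essentially the same thickening idea as the paper's Theorem~\ref{thm:rel_graph_cov}: a closed $G$-cover yields a proper coloring (upper bound), and a proper coloring of a net yields a $G$-cover by taking $\epsilon/D$-neighborhoods of color classes (lower bound). The only differences are cosmetic: the paper thickens color classes with \emph{open} balls $\text{int}(B(v,\epsilon/D))$ and reaches a contradiction with $\cov{G}{M}=k$ using strict inequalities, whereas you thicken with closed $\epsilon/D$-neighborhoods and insert an $\eta$-slack to realize the infimum by actual net points, forcing you to take $D=L+2$ instead of the paper's $D=L+1$. You also introduce the additional constant $\delta=\min_{x,\,g\neq\1}\dist{x,gx}$ to explicitly rule out the degenerate case $y_1=y_2$ (a would-be loop), which the paper leaves implicit; this is a small but genuine bit of added care, since the paper's contradiction ``$v\sim v'$ with the same color'' is only an honest contradiction when $v\neq v'$.
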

	
	This relationship is what motivates us to find bounds for the $G$-covering number, and, at the same time this duality is one of our main tools finding these bounds. 
	
	\begin{theorem}\label{thm:inequalieties_cov_zm}
		Let $G$ be a finite group and $K$ a geometric $G$-simplicial complex. Let $k=\ind{G}{K}$. Then
		$$k+|G|\leq \cov{G}{K}\leq k(|G|-1)+2.$$	
	\end{theorem}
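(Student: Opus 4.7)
The lower bound $k+|G|\le \cov{G}{K}$ is a statement about forbidden $G$-covers: no $G$-cover of size $c<k+|G|$ can exist on a $G$-space of index $k$. Equivalently, a $G$-cover $U_1,\dots,U_c$ forces $\ind{G}{K}\le c-|G|$, which amounts to exhibiting a $G$-equivariant map $\Phi\colon K\to E_{c-|G|}G$. My plan is to build $\Phi$ from a partition of unity $\{\psi_i\}_{i=1}^c$ subordinate to the $G$-cover. The associated equivariant bumps $\psi_{i,g}(x):=\psi_i(g^{-1}x)$ satisfy a key pointwise property: for each $x\in K$ and each $i$, the $G$-cover condition $U_i\cap gU_i=\emptyset$ makes at most one of the $\psi_{i,g}(x)$ positive. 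The naive assembly of these coefficients delivers a $G$-map to $G^{*c}=E_{c-1}G$; the real work is to compress this map down to $G^{*(c-|G|+1)}=E_{c-|G|}G$ by using the $|G|$-fold redundancy of the free $G$-action on each orbit $\{gU_i\}_{g\in G}$ to collapse $|G|-1$ join factors in a $G$-equivariant way.

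For the upper bound, monotonicity of $\cov{G}$ along $G$-maps is immediate: if $f\colon X\xrightarrow{G}Y$ and $Y=\bigcup V_i$ is a $G$-cover, then $f^{-1}(V_i)\cap g f^{-1}(V_i)=f^{-1}(V_i\cap gV_i)=\emptyset$, so $\{f^{-1}(V_i)\}$ is a $G$-cover of $X$. Combined with $K\xrightarrow{G}E_kG$ (from $k=\ind{G}{K}$), this reduces the task to showing $\cov{G}{E_kG}\le k(|G|-1)+2$ for $k\ge 1$. I would prove this by induction on $k$ using the iterated-join presentation $E_kG=E_{k-1}G*G$. The base case $k=1$ uses Example \ref{ex:EdG_spaces}(3): when $G$ is cyclic, $\S^1$ is covered by $|G|+1$ open arcs of angular width slightly below $2\pi/|G|$, and the general case follows by $G$-equivalence of any $E_1G$ model. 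For the inductive step, given a $G$-cover $V_1,\dots,V_N$ of $E_{k-1}G$, lift each $V_j$ to an open set $\widetilde V_j\subset E_{k-1}G*G$ consisting of points whose equatorial component lies in $V_j$ and whose radial parameter is bounded away from the $G$-apex; the $G$-cover disjointness then carries over. The $|G|$ apices are then handled by only $|G|-1$ additional open sets, by merging a neighborhood of one chosen apex into $\widetilde V_1$ with a radial buffer ensuring that the enlarged $\widetilde V_1$ remains disjoint from its $G$-translates.

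The main obstacle is the dimensional compression in the lower bound: the naive partition-of-unity argument only gives $\ind{G}{K}\le c-1$, hence the weaker $\cov{G}{K}\ge k+1$. Gaining the extra $|G|-1$ units of slack requires genuinely using the free $G$-action on the orbits $\{gU_i\}_{g\in G}$ — beyond the pointwise disjointness — to produce a $G$-equivariant identification of cells in $G^{*c}$ that factors the target down to $G^{*(c-|G|+1)}$. A secondary difficulty is the apex-merging step of the upper-bound induction, where the radial buffer must be chosen so that $\widetilde V_1$ simultaneously stays open, absorbs exactly one apex, and remains disjoint from every nontrivial $G$-translate of itself.
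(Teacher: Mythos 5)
Your proposal splits naturally into two halves, and they fare differently.

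For the lower bound you propose a partition-of-unity construction, which is a genuinely different route from the paper. However, you yourself identify the crux — compressing the naive $G$-map $K\to G^{*c}=E_{c-1}G$ down to $E_{c-|G|}G$ to gain the extra $|G|-1$ — and leave it entirely unresolved. This is not a technical loose end but the whole content of the bound: the naive argument yields only $\cov{G}{K}\geq k+1$, which is the standard Lyusternik--Schnirelmann statement and carries no information about $|G|$. I do not see how the pointwise disjointness of the bumps $\psi_{i,g}$ produces a $G$-equivariant collapse of $|G|-1$ join factors; the orbits $\{gU_i\}_g$ give you a free $G$-action on the index set $\{1,\dots,c\}\times G$ but no obvious identification of cells in $G^{*c}$. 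The paper instead works with the Hom-complex $\hom(K_m,\cdot)$ (with $m=|G|$), where the $G$-action on $K_m$ by right multiplication makes the construction functorial and free, and the key numerical gain comes from Lemma~\ref{lemma:dim_hom_km_kd}: $\dim\hom(K_m,K_t)=t-m$, which is exactly the $|G|-1$ units better than $\dim G^{*t}=t-1$. Without an analogue of that dimension count your proposal stalls at $k+1$.

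For the upper bound the overall strategy (reduce to $\cov{G}{k}$ via $K\xrightarrow{G}E_kG$, then induct on $k$ through $E_kG=E_{k-1}G*G$, paying $|G|-1$ per join) matches the paper's Theorem~\ref{thm:recursive_upper_bound}, and your sketch of the inductive step — radial truncation of the lifted cover plus extra sets near the apices, with one apex absorbed into an existing class — is essentially what the paper does with the sets $F_{i,g}$, $\bar F_{i,g}$, and $F_{i,\1}$. The problem is your base case. You invoke Example~\ref{ex:EdG_spaces}(3) and claim $\S^1$ covered by $|G|+1$ arcs handles $k=1$, "and the general case follows by $G$-equivalence of any $E_1G$ model." This is false for non-cyclic $G$: only cyclic groups act freely on $\S^1$, so $\S^1$ is not an $E_1G$ space when $G$ is non-cyclic, and $G$-equivalence of $E_1G$ models is a statement within a fixed group $G$, not a device for importing cyclic-group covers. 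The paper's proof of $\cov{G}{1}\leq|G|+1$ (Theorem~\ref{thm:upper_bound_1dim}) is substantially more involved precisely because of this: it works on $K=G*G$, decomposes each edge-orbit via the cycle structure of the permutation $\sigma(ga)=gb$, subdivides edges into $r$ intervals, and applies Lemma~\ref{lemma:cover_cyclic} cycle by cycle with a shared extra color. You would need to replace your base case with an argument of this kind (or a different one) for non-cyclic groups; the circle picture does not do it.
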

	
	Thus we get linear upper and lower bounds for $\cov{G}{K}$, depending only on the size of $G$ and the $G$-index of $K$. In the case $G=\Z_2$ we get the tight result $\cov{G}{K}= \ind{\Z_2}{K}+2$, where the lower bound is a generalization of Lyusternik--Schnirelmann--Borsuk Theorem, and the upper bound is obtained by ``coloring'' the facets of a regular $(k+1)$-dimensional simplex inscribed in $\S^k$ (see e.g.\cite{Kahle-Martinez2020, Matousek2002}). 
	
	In general, we conjecture that the lower bound gives the true chromatic number. If $K$ has \mbox{$G$-index}~$k$, then $K\xrightarrow{G}E_kG$, and so by Theorem \ref{thm:properties_cov}, $\cov{G}{K}\leq\cov{G}{E_kG}=\cov{G}{k}$, so we only need to upper bound the $G$-covering number for $E_dG$ spaces. Thus, we state our conjecture as follows. 
	
	\begin{conj}\label{conj:cov_d}
		Let $G$ be a finite group. Then $\cov{G}{d}=|G|+d$. 
	\end{conj}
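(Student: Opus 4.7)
The inequality $\cov{G}{d} \geq |G|+d$ is immediate from Theorem \ref{thm:inequalieties_cov_zm} applied to any $E_dG$ space, since $\ind{G}{E_dG}=d$, so the task is to construct, for each $d \geq 0$ and each finite group $G$, an explicit $G$-cover of some $E_dG$ by exactly $|G|+d$ members. The most natural plan is induction on $d$ using the join model $E_dG = G^{*(d+1)}$. The base case $d=0$ is immediate: $E_0G=G$ as a $0$-dimensional simplicial complex is covered by its $|G|$ singletons. For the inductive step, one would want to upgrade a $G$-cover $\{F_1,\ldots,F_{|G|+d}\}$ of $E_dG$ to a $G$-cover of $E_{d+1}G=E_dG * G$ by adding exactly one new set.

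The natural attempt is to fix a surjection $i \mapsto g_i$ from $\{1,\ldots,|G|+d\}$ onto $G$ and to replace each $F_i$ by a thickening $\tilde F_i$ that contains $F_i$ (covering the bottom copy of $E_dG$) together with a small open neighborhood of the vertex $g_i$ in the new join factor (covering one vertex of the top $G$). Because open stars of distinct vertices in the top factor are disjoint, one can arrange each $\tilde F_i$ to be $G$-avoiding, and since $\{g_i\}=G$, the union $\bigcup \tilde F_i$ hits every point of $E_dG$ and every vertex of the new $G$. The main obstacle is the ``equator'' of the join, i.e.\ the points $(1-t)x+tg$ with $x \in F_i$, $g \neq g_i$, and $t$ bounded away from $0$ and $1$. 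A direct calculation shows that for $|G|\geq 3$ the uncovered set, viewed inside $E_dG \times G$, meets each of its nontrivial $G$-translates; hence it cannot be swallowed by any single $G$-avoiding extra set. This is precisely the obstruction responsible for the weaker Theorem \ref{thm:inequalieties_cov_zm} bound $d(|G|-1)+2$.

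Closing the gap would presumably require one of: (a) letting the choice $g_i = g_i(x)$ vary continuously in $x$, i.e.\ using a section of an auxiliary principal $G$-bundle over a thickened equator, so that the uncovered strip becomes coverable by a single extra $G$-avoiding set; (b) reformulating the problem as the computation of the sectional category of the projection $E_dG \to E_dG/G$ and deriving the upper bound from a small cellular model of the quotient together with cup-length calculations in $H^*(G;\mathbb{F})$, in the spirit of the Lyusternik--Schnirelmann argument for real projective spaces that handles $G=\Z_2$; or (c) exhibiting an explicit $G$-invariant cell structure on some $E_dG$ with exactly $|G|+d$ top-dimensional cells, generalizing the regular $(d+1)$-simplex inscribed in $\S^d$ that drives the $G=\Z_2$ case. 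Given that the author presents only computational evidence for the conjecture, I expect the crux of any proof to lie in constructing or proving the existence of such a refined combinatorial or geometric object.
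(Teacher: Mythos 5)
The statement you address is Conjecture~\ref{conj:cov_d}, which the paper explicitly leaves \emph{open}: the paper supplies only the lower bound $\cov{G}{d}\geq|G|+d$ (Theorem~\ref{thm:lower_bound_cov}), the non-tight recursive upper bound $\cov{G}{d}\leq\cov{G}{d-1}+|G|-1$ (Theorem~\ref{thm:recursive_upper_bound}) together with the base case $\cov{G}{1}\leq|G|+1$ (Theorem~\ref{thm:upper_bound_1dim}), and computer verification of the cases in Table~\ref{table:covering_numbers}. There is therefore no proof in the paper against which to check you, and you correctly do not overclaim: you recognize the lower bound as settled, identify the missing piece as the upper bound $\cov{G}{d}\leq|G|+d$, and stop at a discussion of obstacles.

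Your analysis of those obstacles is accurate and aligns with what the paper actually achieves. The one-new-color induction on $E_{d+1}G=E_dG*G$ that you sketch does fail for $|G|\geq3$: for a fixed surjection $i\mapsto g_i$, the uncovered equatorial strip
$U=\{(1-t)x\oplus tg : x\in F_i,\ g\neq g_i,\ \delta\leq t\leq 1-\delta\}$
does meet all of its nontrivial $G$-translates (given $h\neq\1$ and $x\in F_i$, the point $h^{-1}x$ lies in some other $F_{i'}$, and since $|G|\geq3$ one can choose $g$ avoiding both $g_i$ and $hg_{i'}$), so a single extra $G$-avoiding set cannot absorb $U$. This is exactly why the paper's Theorem~\ref{thm:recursive_upper_bound} introduces $|G|-1$ new classes per dimension: it splits each ray $g*F_i$ at $t=1/2$ and colors the outer halves $\bar F_{i,g}$ by $g\neq\1$ rather than by $i$. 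Your three proposed routes to close the gap are sensible; route (c), an explicit equivariant cell structure with $|G|+d$ facets, is the one closest in spirit to the classical $\Z_2$ argument the paper cites and, in effect, to how the paper produces its computer-found covers via Theorem~\ref{thm:general_upper_bound}. None of these routes is developed in the paper, and the conjecture remains open.
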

	
	We also state the following less general conjectures, which may hold even if Conjecture \ref{conj:cov_d} results to be false. 
	
	\begin{conj}\label{conj:smaller_conjectures}\leavevmode
		\begin{enumerate}
			\item For $m\geq 2$ and $d\geq 0$, $$\cov{\Z_m}{d}=m+d.$$
			\item Let $G$ be a finite group. Then $\cov{G}{d}<\cov{G}{d+1}$ for all $d\geq0$. 
		\end{enumerate}
	\end{conj}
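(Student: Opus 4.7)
The lower bound $\cov{\Z_m}{d}\geq m+d$ in Part (1) is immediate from Theorem \ref{thm:inequalieties_cov_zm} since $\ind{\Z_m}{E_d\Z_m}=d$. For the upper bound I would induct on $d$, with base cases $d=0$ (where $E_0\Z_m=\Z_m$ is covered by $m$ singletons) and $d=1$ (already tight in Theorem \ref{thm:inequalieties_cov_zm}, realized by $m+1$ arcs of length just under $2\pi/m$ on $\S^1$ under the rotation action). For the inductive step I would fix the model $E_d\Z_m\simeq \Z_m*E_{d-1}\Z_m$, take a $\Z_m$-cover $F_1,\ldots,F_{m+d-1}$ of $E_{d-1}\Z_m$ by induction, and try to extend it to a cover of the join using exactly one additional set. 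Since $m+d-1\geq m$, the idea is to pick $m$ of the $F_i$'s and extend each to a cone $\tilde F_{i_k}$ reaching a distinct apex $g_k\in\Z_m$, leaving the remaining $d-1$ sets as subsets of $E_{d-1}\Z_m$, and adding one final set to catch whatever is still missed. Each cone stays $\Z_m$-disjoint because the apexes $g\cdot g_k$ for $g\ne\1$ miss $g_k$, and the base $F_{i_k}$ was $\Z_m$-disjoint to begin with.

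The main obstacle for Part (1) is ensuring the extended cones actually cover everything, not only the apexes: a join point $(1-t)p+tg_k$ lies in $\tilde F_{i_k}$ only when $p\in F_{i_k}$, whereas $p$ might come from any $F_j$. Overcoming this likely requires a more sophisticated fanning construction that exploits the cyclic structure of $\Z_m$ to reshuffle fiberwise covers along each cone --- for instance, allowing each cone to ``rotate'' its fiber as $t$ varies, so that the full $\Z_m$-orbit of base fibers gets swept out on the way to $g_k$. This is precisely the step where the cyclic assumption should matter: the $m$ apex points form a single $\Z_m$-orbit parameterized by rotation, providing a canonical interpolation between corresponding fibers that is not available for a general finite group, which would explain why one expects Part (1) even if the full Conjecture \ref{conj:cov_d} were false.

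For Part (2), weak monotonicity $\cov{G}{d}\leq\cov{G}{d+1}$ is immediate from the $G$-equivariant inclusion $E_dG\hookrightarrow G*E_dG$ combined with the $G$-equivalence $G*E_dG\xrightarrow{G}E_{d+1}G$: restricting any $G$-cover of the join to $E_dG$ keeps the sets $G$-disjoint and covering. For strict inequality I would argue by contradiction, assuming $\cov{G}{d+1}=\cov{G}{d}=N$. By Theorem \ref{thm:rel_graph_cov_intro} this assumption yields an $N$-coloring of $\Ggraph{G}{E_{d+1}G,\epsilon}$, and hence, via the Babson--Kozlov framework of Section \ref{section:lower_bound}, a $G$-map from $E_{d+1}G$ into a $G$-space built from the relevant $\hom(\cdot,K_N)$ whose $G$-index I would aim to bound above by $d$, contradicting $\ind{G}{E_{d+1}G}=d+1$. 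The principal obstacle here is pinning down the precise $G$-index of that Hom-complex target: the usual topological lower bounds on chromatic number only give one-sided estimates, so forcing a strict chromatic jump between consecutive dimensions likely requires a refined $G$-equivariant analysis --- possibly iterative, cascading the assumed equality $\cov{G}{d+1}=\cov{G}{d}$ into a sequence of $G$-maps that must eventually collapse to a map violating $\ind{G}{E_kG}=k$ for some $k$.
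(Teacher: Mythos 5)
This statement is a \emph{conjecture} in the paper --- there is no proof to compare against, and the paper explicitly treats both parts as open. Your write-up correctly recognizes this: you nail down what is already known, and you flag the remaining gaps rather than papering over them. A few specific remarks.

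For Part (1), your lower bound via Theorem \ref{thm:inequalieties_cov_zm} is correct, and your base cases $d=0,1$ match what the paper establishes (Theorem \ref{thm:upper_bound_1dim} gives $\cov{G}{1}\leq |G|+1$, hence equality for all $G$, not just cyclic). Your proposed inductive step is essentially the same cone-over-the-base construction the paper uses in Theorem \ref{thm:recursive_upper_bound}, and you correctly identify exactly why it falls short: a point $(1-t)p\oplus tg_k$ of the join belongs to the extended cone $\tilde F_{i_k}$ only when $p\in F_{i_k}$, so the cones over the $m$ chosen base sets leave uncovered all join points whose base lies in one of the other sets. The paper's workaround (splitting each $g*F_i$ at $t=1/2$ and regrouping the outer halves by apex) pays for this with $|G|-1$ extra colors, which is how one lands at the proved bound $\cov{G}{d}\leq\cov{G}{d-1}+|G|-1$ rather than the conjectured $+1$. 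Your ``rotating fiber'' heuristic for exploiting the cyclic structure is exactly the kind of idea one would need, but as you say yourself it is not yet a construction; this is precisely where the problem is open.

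For Part (2), your weak monotonicity argument is the same as the paper's remark immediately after the conjecture. Your strict-inequality sketch --- assume $\cov{G}{d+1}=\cov{G}{d}=N$, produce an $N$-coloring of a $G$-Borsuk graph on $E_{d+1}G$, and try to feed it into the Hom-complex machinery to contradict $\ind{G}{E_{d+1}G}=d+1$ --- does not close, and the reason is worth naming: the lower bound from Section \ref{section:lower_bound} only gives $\ind{G}{E_{d+1}G}\leq\dim\hom(K_{|G|},K_N)=N-|G|$, i.e.\ $N\geq |G|+d+1$, which is the \emph{same} inequality you already have from Theorem \ref{thm:inequalieties_cov_zm} and yields no contradiction unless the conjectured equality $N=|G|+d$ holds for dimension $d$, which is itself Part (1). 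So the circularity is real, not just an obstacle of bookkeeping. In short: your proposal is a faithful and honest account of the state of the problem, not a proof, and no proof exists in the paper to compare it to.
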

	
	Notice that item 2 in the conjecture asks if $\cov{G}{\cdot}$ is strictly increasing, since the fact that it is weakly monotone follows trivially from the $G$-map $E_dG=G^{*(d+1)}\xhookrightarrow{G}G^{*(d+2)}=E_{d+1}G$, and Theorem \ref{thm:properties_cov}.
	
	Table \ref{table:covering_numbers} summarizes the cases for which we have been able to compute $\cov{G}{d}$ exactly. Notice that for all of these, Conjecture \ref{conj:cov_d} holds. Specific $\Z_m$-coverings for $E_2\Z_m$ when $m=3,4,5$ and 6 are shown in Figure \ref{fig:upper_bounds}. Here we use $\Z_m*\S^1$ as $E_2\Z_m$ space, and is depicted as described in Examples~\ref{ex:EdG_spaces}. These $G$-covers were found solving integer linear programming problems with the free software \cite{sagemath} and a student license of \cite{gurobi}. We further explain our approach in the appendix.  
	
	\begin{table}
		\centering
		\begin{tabular}{|c|c||c|}
			\hline
			\textbf{\textbf{$\bm{G}$-Index}} & \textbf{\textbf{Group}} & \textbf{\textbf{Covering Number}} \\ 
			$\bm{d}$                         & $\bm{G}$                & \textbf{cov}$\bm{{}_{G}(d)}$      \\ \hline\hline
			any $d\geq0$                     & $\Z_2$                  & $d+2$                             \\ \hline
			0                                & any $G$                 & $|G|$                             \\ \hline
			1                                & any $G$                 & $|G|+1$                           \\ \hline
			\multirow{5}{*}{2}               & $\Z_3$                  & 5                                 \\ \cline{2-3} 
			&   $\Z_4$                      & 6                                 \\ \cline{2-3} 
			&   $\Z_2\times\Z_2$                & 6                                 \\ \cline{2-3} 
			&   $\Z_5$                & 7                                 \\ \cline{2-3} 
			&   $\Z_6$                 & 8                                 \\ \hline
			\multirow{1}{*}{3}               & $\Z_3$                  & 6                                 \\ 
			\hline
		\end{tabular}\caption{Known Covering Numbers}\label{table:covering_numbers}
	\end{table}
	
	\begin{figure}
		\centering
		\begin{subfigure}[b]{0.45\textwidth}
			\centering
			\includegraphics[width=\textwidth]{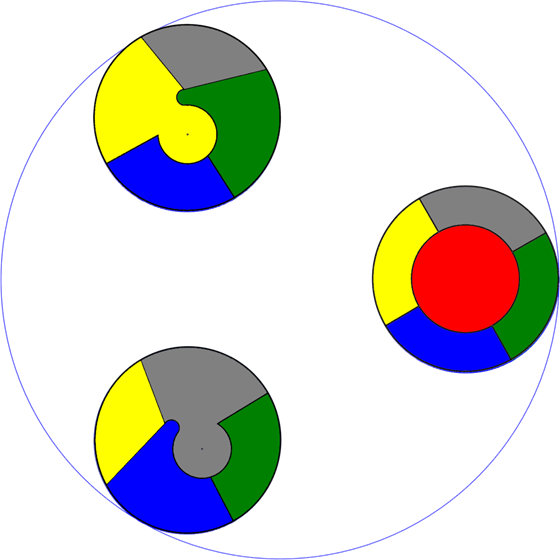}
			\caption{A cover of $E_2\Z_3$ with 5 colors.}
		\end{subfigure}
		\hfill
		\begin{subfigure}[b]{0.45\textwidth}
			\centering
			\includegraphics[width=\textwidth]{{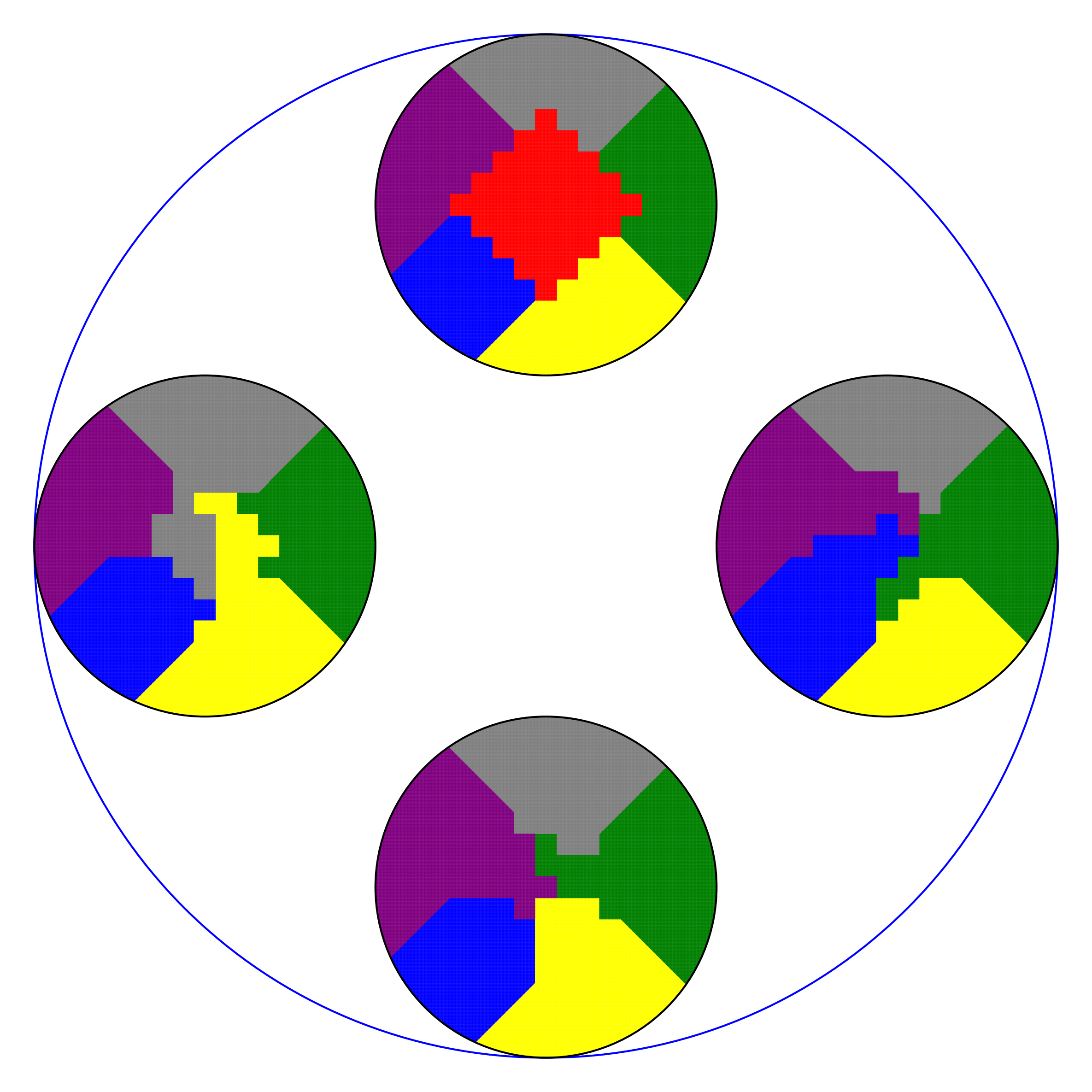}}
			\caption{A cover of $E_2\Z_4$ with 6 colors.}
		\end{subfigure}
		\hfill
		\begin{subfigure}[b]{0.45\textwidth}
			\centering
			\includegraphics[width=\textwidth]{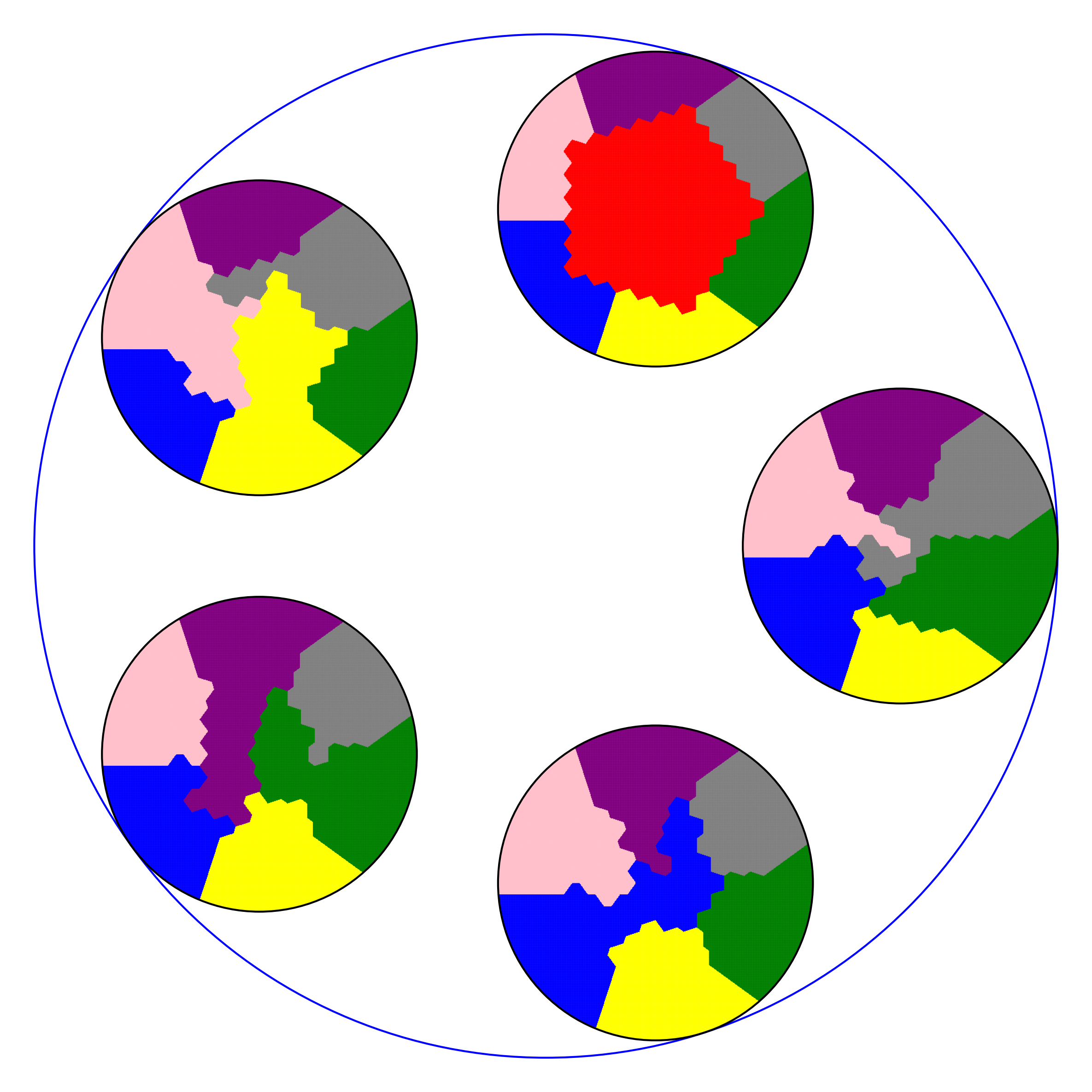}
			\caption{A cover of $E_2\Z_5$ with 7 colors. }
		\end{subfigure}
		\begin{subfigure}[b]{0.45\textwidth}
			\centering
			\includegraphics[width=\textwidth]{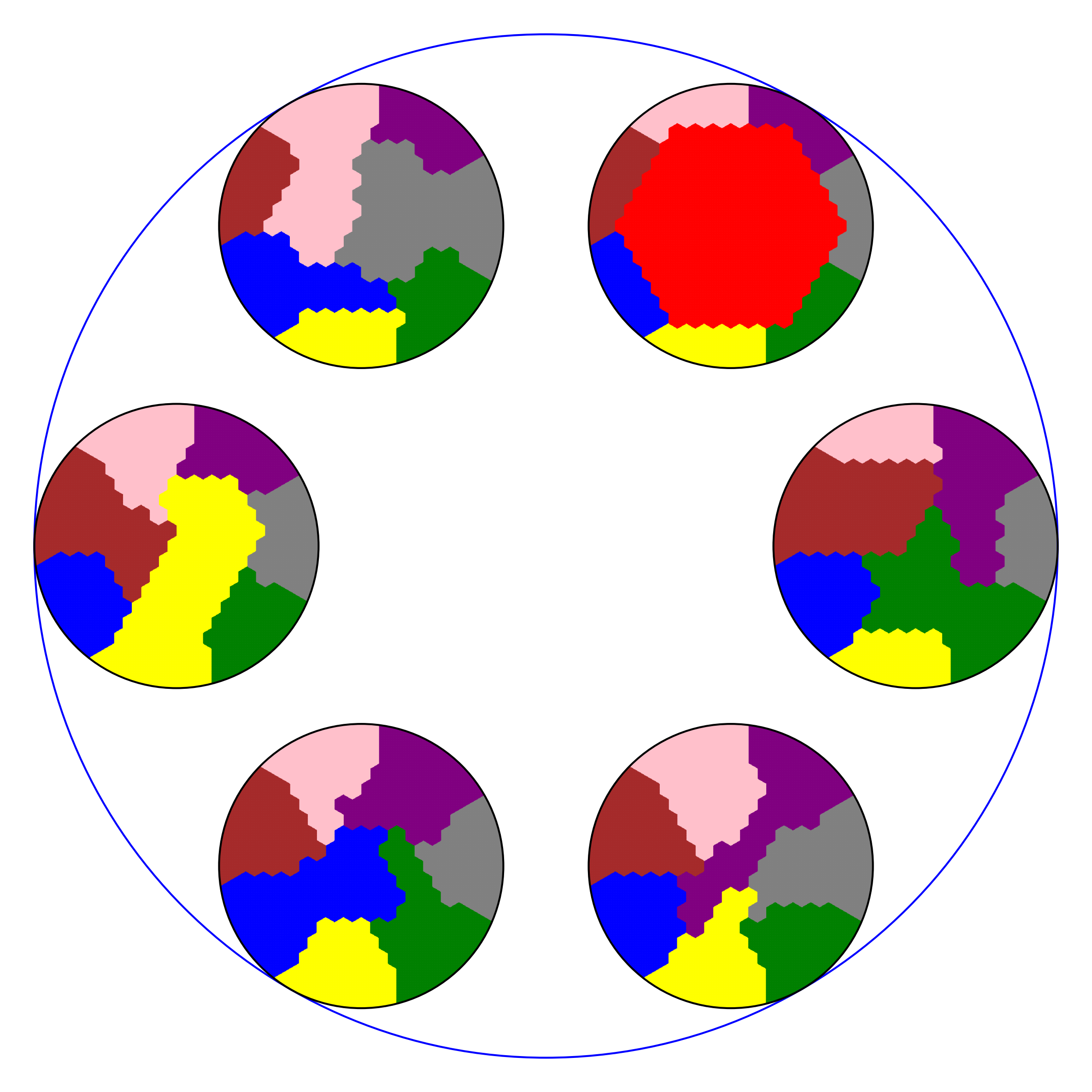}
			\caption{A cover of $E_2\Z_6$ with 8 colors. }
		\end{subfigure}
		\caption{Optimal $G$-coverings for 2-dimensional classifying spaces of $G=\Z_3,\Z_4, \Z_5$ and $\Z_6$}.
		\label{fig:upper_bounds}
	\end{figure}
	
	The lower bound in Theorem \ref{thm:inequalieties_cov_zm}, provides a generalization of the Lyusternik--Schnirelmann--Borsuk Theorem to $G$-spaces. 
	
	\begin{coro}[Generalization of LSB-Theorem]
		Let $G$ be a finite group and $K$ a finite $G$-simplicial complex. Suppose $M$ is $d$-connected and $|G|=m$. If $M=U_1\cup\dots\cup U_{d+m}$ is an open (or closed) cover of $K$, then there exist $x\in K$, an index $1\leq i\leq d+m$ and an element $g\in G\setminus\{\1\}$ such that $x,g(x)\in U_i$.  
	\end{coro}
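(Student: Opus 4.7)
The plan is to prove this by contrapositive, and it should follow immediately from Theorem \ref{thm:inequalieties_cov_zm} together with the connectivity bound on the $G$-index recorded in Theorem \ref{thm:properties-G-index}(4). So the main work is just unpacking the definition of a $G$-cover in light of the hypothesis. There is no real obstacle here: everything has been packaged into the preceding theorems.

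First I would assume, toward a contradiction, that $K = U_1 \cup \dots \cup U_{d+m}$ is an open (resp.\ closed) cover such that no $U_i$ contains any pair $x, g(x)$ with $g \neq \1$. This is precisely the statement that $U_i \cap g(U_i) = \emptyset$ for every $g \in G \setminus \{\1\}$ and every $i$, so the given cover is a $G$-cover of $K$ in the sense of the definition. In particular $\cov{G}{K} \leq d+m$ (and the same bound holds in the closed case since, by Theorem \ref{thm:properties_cov} applied to the compact space $K$, the open and closed $G$-covering numbers coincide).

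Next I would bound $\cov{G}{K}$ from below. Because $K$ is $d$-connected, Theorem \ref{thm:properties-G-index}(4) gives $\ind{G}{K} \geq d+1$. Combining this with the lower bound in Theorem \ref{thm:inequalieties_cov_zm} yields
\[
\cov{G}{K} \;\geq\; \ind{G}{K} + |G| \;\geq\; (d+1) + m \;=\; d+m+1,
\]
which directly contradicts $\cov{G}{K} \leq d+m$. Hence no such cover exists, and any cover of size $d+m$ must contain some set $U_i$ and some pair $x, g(x) \in U_i$ with $g \in G \setminus \{\1\}$, as claimed.
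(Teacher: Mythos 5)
Your proof is correct and follows essentially the same route as the paper: both derive $\ind{G}{K} \geq d+1$ from $d$-connectedness via Theorem \ref{thm:properties-G-index}, apply the lower bound $\cov{G}{K} \geq \ind{G}{K} + |G|$ from Theorem \ref{thm:inequalieties_cov_zm}, and conclude (using the open/closed equivalence of Theorem \ref{thm:properties_cov}) that no $G$-cover of size $d+m$ can exist. The only difference is that you frame the conclusion explicitly as a contradiction, whereas the paper states it directly.
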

	\begin{proof}
		By Theorem \ref{thm:properties-G-index}, $\ind{G}{K}\geq d+1$, and by Theorem \ref{thm:inequalieties_cov_zm}, $\cov{G}{K}\geq\ind{G}{K}+|G|\geq m+d+1$. Thus there are not open (nor closed, by Theorem \ref{thm:properties_cov}) $G$-covers of $K$. 
	\end{proof}
	
	We also generalize the random Borsuk graphs studied in \cite{Kahle-Martinez2020} to random induced subgraphs of $G$-Borsuk graphs. 
	
	\begin{defi}[Random $G$-Borsuk graphs]
		Let $G$ be a finite group, and $M$ a compact metric $G$-space. We define the \textbf{random $G$-Graph on $M$}, denoted by $\Ggraph{G}{n,\epsilon}$, as the $G$-Borsuk graph $\Ggraph{G}{X(n),\epsilon}$, where $X(n)=\{x_1,\dots, x_n\}$ is a set of $n$ i.i.d. uniform random points on $M$.
	\end{defi}
	
	In the same spirit as Theorems 1.2 and 1.3 of \cite{Kahle-Martinez2020}, the next result provides a tight threshold (up to a constant) for $\epsilon$, such that the random $G$-Borsuk graphs have the chromatic number equal to the $G$-covering number of its dimension. In this theorem, and throughout this paper, we will say that a random event $A_n$ happens \textit{asymptotically almost surely (a.a.s.)} if $\P{A_n}\to 1$ as $n\to\infty$. 
	
	\begin{theorem}\label{thm:chromatic_random_intro}
		Let $G$ be a finite group and $K$ a geometric $E_dG$ space. Then, there exist constants $C_1$ and $C_2$ (independent from $n$), such that:
		\begin{enumerate}
			\item If $\epsilon\geq C_1\left(\frac{\log n}{n}\right)^{1/d}$, then a.a.s. 
			$\chi(\Ggraph{G}{n,\epsilon})=\cov{G}{d}.$
			
			\item If $\epsilon\leq C_2\left(\frac{\log n}{n}\right)^{1/d}$, then a.a.s.
			$\chi(\Ggraph{G}{n,\epsilon})\leq\cov{G}{d-1}.$
		\end{enumerate}
	\end{theorem}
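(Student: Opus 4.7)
The plan is to handle the two bounds by distinct techniques: Part 1 combines a standard $\epsilon$-net argument with Theorem \ref{thm:rel_graph_cov_intro}, while Part 2 uses an ``empty $G$-ball'' argument to reduce the problem to a space of smaller $G$-index.

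For Part 1, the upper bound $\chi(\Ggraph{G}{n,\epsilon}) \leq \cov{G}{d}$ is deterministic once $n$ is large enough that $\epsilon$ is below the constant $\epsilon_0$ of Theorem \ref{thm:rel_graph_cov_intro}: any fixed closed $G$-cover of $K$ with $\cov{G}{K}=\cov{G}{d}$ sets has a positive minimum translate distance and therefore induces a proper coloring of $\Ggraph{G}{K,\epsilon} \supset \Ggraph{G}{X(n),\epsilon}$. For the matching lower bound I would show that when $\epsilon \geq C_1(\log n/n)^{1/d}$, the sample $X(n)$ is a.a.s.\ an $(\epsilon/D)$-net of $K$, at which point Theorem \ref{thm:rel_graph_cov_intro} yields equality. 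The net claim is a textbook union-bound computation: cover $K$ by $N=O(\epsilon^{-d})$ balls of radius $\epsilon/(2D)$; each is missed by all of $X(n)$ with probability $\lesssim e^{-cn\epsilon^d}$, so $N e^{-cn\epsilon^d}=O(\epsilon^{-d}n^{-cC_1^d})=o(1)$ once $C_1$ is sufficiently large.

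For Part 2, the idea is to locate a $G$-invariant hole in the random sample and use it to drop the topological dimension of the ambient space. Choosing $C_2$ small, the expected number of pairwise-disjoint $G$-orbits of $\epsilon$-balls in $K$ entirely avoided by $X(n)$ is of order $\left(n/(|G|\log n)\right)\cdot n^{-c|G|C_2^d}$, which diverges as soon as $c|G|C_2^d<1$; since distinct orbits behave nearly independently under the uniform measure, a second-moment computation upgrades this to the a.a.s.\ existence of a point $p \in K$ with $X(n) \cap \bigsqcup_{g \in G} B_\epsilon(gp) = \emptyset$. Setting $K' := K \setminus \bigsqcup_{g \in G} B_\epsilon(gp)$ and $G$-equivariantly deformation-retracting each of the $|G|$ punctured top-dimensional cells onto its boundary gives $\ind{G}{K'}\leq d-1$, so $K'$ admits a closed $G$-cover with $\cov{G}{d-1}$ sets. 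Since $X(n)\subset K'$, this cover colors $\Ggraph{G}{X(n),\epsilon}$ with $\cov{G}{d-1}$ colors provided its minimum translate distance exceeds $\epsilon$.

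The hard part is exactly that last proviso: one must construct the cover of $K'$ so that its minimum translate distance is bounded below by a constant $\beta_0$ independent of $\epsilon$, guaranteeing $\epsilon<\beta_0$ throughout the range $\epsilon\leq C_2(\log n/n)^{1/d}$. Pulling back a fixed cover of $E_{d-1}G$ along a retraction $K'\to E_{d-1}G$ is tempting but fails, since the retraction is only Lipschitz with constant of order $1/\epsilon$ near the removed balls and this collapses the translate distance of any pulled-back cover. My proposed remedy is to build the cover on $K'$ directly: start from a fixed $G$-cover of a lower-dimensional subcomplex of $K$ with some constant translate distance, and extend into each top cell only along collar neighborhoods that remain at macroscopic distance from the removed $\epsilon$-balls, so that $\beta_0$ depends only on $K$ and $G$ and the conclusion follows for all sufficiently large $n$.
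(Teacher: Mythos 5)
Your Part 1 matches the paper's dense-case argument essentially step for step: a union bound over a fixed net of balls of radius $\epsilon/(2D)$, combined with Theorem \ref{thm:rel_graph_cov}, gives both the deterministic upper bound and the a.a.s.\ lower bound. That half is fine.

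Part 2 has two genuine gaps. First, you remove only a single $G$-orbit $\{gp : g\in G\}$ of $\epsilon$-balls and assert that retracting the $|G|$ punctured $d$-cells onto their boundaries gives $\ind{G}{K'}\le d-1$. But a typical $E_dG$ complex (e.g.\ $G^{*(d+1)}$ has $|G|^{d+1}$ facets) has many more $d$-cells than $|G|$, and after deleting only one orbit the complex $K''$ obtained is still $d$-dimensional with most $d$-cells untouched. That $K''$ is $(d-2)$-connected, so $\ind{G}{K''}\ge d-1$, but whether it equals $d-1$ rather than $d$ is exactly what needs proof, and nothing in the proposal supplies it. The paper avoids the question entirely by removing one ball from \emph{every} $d$-face (the $G$-invariant set $X_0$ in Theorem \ref{thm:homom_boundary} has one point per facet), so that the radial retraction lands cleanly on the $(d-1)$-skeleton $L=K^{(d-1)}$, which is visibly an $E_{d-1}G$ space; the Poisson computation still closes since the total volume of these $m|G|$ balls is still $\Theta(\epsilon^k)$.

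Second, you diagnose the radial retraction pullback as ``failing'' because of the $1/\epsilon$ Lipschitz blow-up, and propose to replace it with a direct construction via collar neighborhoods that is never actually built. But the retraction argument does not fail; it is precisely what the paper executes, via the quantitative estimate of Lemma \ref{lemma:rays_plane} and Lemma \ref{lemma:phi_is_lips}. The point is that the Lipschitz constant of radial projection from the removed center $O$, restricted to points at distance $\ge\kappa\epsilon$ from $O$, is of order $1/(\kappa\epsilon)$, and you only ever apply it to pairs at distance $\le\epsilon$, so the image distance is $O(1/\kappa)$ --- a quantity you control by choosing $\kappa$ (the ball-radius multiplier) large. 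Theorem \ref{thm:homom_boundary} packages this as a graph homomorphism into $\Ggraph{G}{L,\epsilon_0}$ for a \emph{fixed} $\epsilon_0$, which is then colored by Theorem \ref{thm:rel_graph_cov}. So you have correctly identified that there is a subtlety, but misattributed its resolution; the vague ``collar'' remedy would need to be replaced by the scale-$\kappa$ Lipschitz estimate (or an equivalent), and the single-orbit removal would need to be upgraded to a one-ball-per-facet removal for the topological step to go through cleanly.
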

	
	Instead of proving Theorem \ref{thm:chromatic_random_intro}, we will prove the more general Theorem \ref{thm:chromatic_random}, which can be applied to any geometric $G$-simplicial complex, even when the $G$-index and dimension don't coincide. However, in such general cases, the thresholds found don't coincide. 
	
	A nice application of $G$-Borsuk graphs, is that they can produce graphs with large chromatic number and prescribed clique number. A similar construction using $G$-actions can also be found in \cite{Danesh2018}.
	
	\begin{coro}
		Given $m\leq t$ integers, there is a graph $H$ such that $\omega(H)=m$ and $\chi(H)\geq t$. 
	\end{coro}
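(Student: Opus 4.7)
The plan is to realize $H$ as a $\Z_m$-Borsuk graph on a fine net inside a classifying space. Setting $G=\Z_m$ and $d=\max(t-m,0)$, take $K$ to be a geometric $E_dG$ space (for instance $G^{*(d+1)}$). Then pick $\epsilon>0$ small enough to satisfy the hypotheses of Theorem \ref{thm:rel_graph_cov_intro}, let $X\subset K$ be a $\Z_m$-invariant $(\epsilon/D)$-net obtained by closing any $(\epsilon/D)$-net under the $\Z_m$-action, and set $H=\Ggraph{\Z_m}{X,\epsilon}$.

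The chromatic lower bound is essentially immediate. Theorem \ref{thm:rel_graph_cov_intro} gives $\chi(H)=\cov{\Z_m}{K}$, and the lower bound half of Theorem \ref{thm:inequalieties_cov_zm} gives $\cov{\Z_m}{K}\geq\ind{\Z_m}{K}+|G|=d+m\geq t$.

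The main content is the clique number. The bound $\omega(H)\geq m$ is easy: for any $x\in X$, its $\Z_m$-orbit $\{x,\nu x,\dots,\nu^{m-1}x\}$ lies in $X$ by invariance and forms a clique, since the pair $\nu^ix,\nu^jx$ is joined by the edge witnessed by $\nu^{i-j}\neq\1$ at distance zero. For the harder direction $\omega(H)\leq m$, I would use freeness and compactness of the action to get a uniform orbit-separation constant $\delta:=\min_{x\in K,\ g\neq\1}\dist{x,gx}>0$, together with a Lipschitz constant $L$ for the action, and shrink $\epsilon$ so that $(L+2)\epsilon<\delta$. Assuming $\{y_0,y_1,\dots,y_m\}$ were a clique, for each $i\geq 1$ pick $g_i\in\Z_m\setminus\{\1\}$ witnessing $y_0\sim y_i$, so $\dist{y_i,g_iy_0}\leq\epsilon$. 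If $h\in\Z_m\setminus\{\1\}$ witnesses $y_i\sim y_j$, a routine triangle-inequality plus Lipschitz estimate yields $\dist{g_iy_0,hg_jy_0}\leq(L+2)\epsilon<\delta$, so by the definition of $\delta$ we get $g_i=hg_j$, hence $h=g_ig_j^{-1}$, and since $h\neq\1$ we conclude $g_i\neq g_j$. The elements $g_1,\dots,g_m$ are therefore $m$ pairwise distinct elements of $\Z_m\setminus\{\1\}$, contradicting $|\Z_m\setminus\{\1\}|=m-1$.

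The main obstacle is purely bookkeeping: coordinating the two thresholds on $\epsilon$. Theorem \ref{thm:rel_graph_cov_intro} provides one bound $\epsilon_0$ for the chromatic identity, while the clique argument needs $\epsilon<\delta/(L+2)$; taking $\epsilon$ below the minimum of both, and $X$ as a $\Z_m$-invariant net at scale $\epsilon/D$, yields a finite graph $H$ with $\omega(H)=m$ and $\chi(H)\geq t$ as required.
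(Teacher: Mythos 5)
Your proposal is correct and follows essentially the same route as the paper: build $H$ as a $G$-Borsuk graph on an $(\epsilon/D)$-net inside a $d$-dimensional classifying space, obtain $\chi(H)=\cov{G}{d}\geq |G|+d=t$ from Theorems~\ref{thm:rel_graph_cov_intro} and~\ref{thm:inequalieties_cov_zm}, and show $\omega(H)=|G|$. The one substantive difference is that the paper simply invokes Lemma~\ref{lemma:clique_number} for the clique number, whereas you re-derive it inline; your derivation is sound, and the minor variations (specializing to $G=\Z_m$, using a $G$-invariant net so that orbits give $m$-cliques directly rather than approximating orbit points by nearby net points, and reorganizing the pigeonhole so that all $g_i$ are first shown distinct) are all harmless and match the spirit of the paper's lemma.
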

	
	\begin{proof}
		Let $G$ be any group of order $m$, and let $d=t-m$. Consider $K$ a geometric $E_dG$ space. Let $\epsilon_1$ and $D_1$ be the corresponding constants given by Theorem \ref{thm:rel_graph_cov}, and $\epsilon_2$, $D_2$, the ones given by Lemma \ref{lemma:clique_number}. Let $\epsilon_0=\min\{\epsilon_1,\epsilon_2\}$ and $D=\max\{D_1,D_2\}$. Take any $X\subset E_dG$, an $(\epsilon/D)$-net, for some $\epsilon<\epsilon_0$. Take $H=\Ggraph{G}{X,\epsilon}$. Then, by Lemma \ref{lemma:clique_number}, $\omega(H)=m$. And by Theorems \ref{thm:rel_graph_cov} and \ref{thm:inequalieties_cov_zm}, 
		$$\chi(H)=\cov{G}{d}\geq |G|+d=t.$$
	\end{proof}
	
	\section[Properties of $\text{cov}_G$]{Properties of $\bm{\text{cov}_G}$}
	
	In this section we state and prove the main properties of $\cov{G}{\cdot}$ and its relationship with the chromatic number of $G$-Borsuk graphs. The following Theorem summarizes a few of these properties. 
	
	\begin{theorem}\label{thm:properties_cov}
		Let $G$ be a finite group, and $M$ and $K$ be $G$-spaces. Then:
		\begin{enumerate}
			\item If $M\xrightarrow[]{G}K$, then $\cov{G}{M}\leq \cov{G}{K}$. In particular, if $M$ and $K$ are $G$-equivalent, $\cov{G}{M}=\cov{G}{K}$. 
			
			\item If $\phi:M\xrightarrow{G}K$ is a Lipschitz map with constant $\lambda$, then for any $X\subset M$, and any $\epsilon>0$, it induces a graph homomorphism $$\phi:\Ggraph{G}{X,\epsilon}\to\Ggraph{G}{\phi(X),\lambda\epsilon}.$$
			
			\item If $M$ is a compact metric space and $G$-acts via Lipschitz maps, then $\cov{G}{M}=\overline{\cov{G}{M}}.$
			So the covering number can be computed using either open or closed covers. 
			
			\item If $G'\leq G$ is a subgroup, then $\cov{G'}{M}\leq\cov{G}{M}$
			
		\end{enumerate}
	\end{theorem}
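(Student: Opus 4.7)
My plan is to dispatch the four items separately. Items (1), (2), and (4) reduce to direct verifications, while item (3) carries the real content and is where I would spend most of the effort.

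For item (1), given a $G$-map $\phi : M \to K$ and an open $G$-cover $K = U_1 \cup \cdots \cup U_k$, I would pull back to $V_i := \phi^{-1}(U_i)$. These sets are open by continuity of $\phi$ and cover $M$. The $G$-disjointness check is immediate: if $x \in V_i \cap g V_i$ for some $g \neq \1$, then $\phi(x) \in U_i$, while writing $x = g y$ with $y \in V_i$ gives $\phi(x) = g \phi(y) \in g U_i$, contradicting $U_i \cap g U_i = \emptyset$. Applying this in both directions yields the $G$-equivalence statement. For item (2), an edge $x \sim y$ in $\Ggraph{G}{X, \epsilon}$ gives some $g \neq \1$ with $\dist[M]{x, g y} \leq \epsilon$, and combining $G$-equivariance with the Lipschitz bound produces $\dist[K]{\phi(x), g \phi(y)} = \dist[K]{\phi(x), \phi(g y)} \leq \lambda \epsilon$, exactly the edge condition for $\Ggraph{G}{\phi(X), \lambda \epsilon}$. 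For item (4), any $G$-cover $\{U_i\}$ is automatically a $G'$-cover since the disjointness condition for $g$ ranging over $G \setminus \{\1\}$ already covers $G' \setminus \{\1\}$.

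The work is in item (3), where I would prove both inequalities $\cov{G}{M} \leq \overline{\cov{G}{M}}$ and $\overline{\cov{G}{M}} \leq \cov{G}{M}$. For the open-from-closed direction, start with a closed $G$-cover $F_1, \ldots, F_k$. For each pair $(i, g)$ with $g \neq \1$, the compact sets $F_i$ and $g F_i$ are disjoint, hence separated by a positive distance; let $\eta > 0$ be the minimum of these separations over the finitely many $(i, g)$ pairs, and let $\lambda$ be a uniform Lipschitz bound for the finitely many maps $g : M \to M$. Choosing $\delta$ small compared to $\eta / (1 + \lambda)$ and setting $U_i := \{x \in M : \dist[M]{x, F_i} < \delta\}$, I would check that the $U_i$ form an open $G$-cover: they cover since $F_i \subset U_i$, and if some $x$ lay in $U_i \cap g U_i$, then applying the Lipschitz bound to the point realizing $x \in g U_i$ and using the triangle inequality would produce a point of $F_i$ within distance less than $\eta$ of $g F_i$, a contradiction. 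For the reverse direction, I would invoke the Lebesgue number of the open cover $\{U_i\}$: there is $\lambda_0 > 0$ such that any set of diameter less than $\lambda_0$ lies in some $U_j$. Then the closed sets $F_i := \{x : \dist[M]{x, M \setminus U_i} \geq \lambda_0 / 2\}$ lie inside $U_i$ (hence inherit the $G$-disjointness), and cover $M$ by the Lebesgue property applied to small balls around each point.

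The main obstacle I anticipate is the thickening step in item (3): one must choose $\delta$ small enough to preserve $G$-disjointness, which requires juggling both the compactness of $M$ (which gives positive minimum separations between $F_i$ and its translates) and the Lipschitz constants of the action (which control how those separations behave under thickening). Everything else should go through cleanly by direct verification.
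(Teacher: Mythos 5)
Your proposal is correct and, for items (1), (2), (4), and the open-from-closed direction of (3), follows essentially the same route as the paper: pull back covers through the $G$-map, verify the Lipschitz edge condition directly, thicken a closed $G$-cover by a radius small compared to the minimal separation $\min_{i,g}\dist{F_i, gF_i}$ divided through by the Lipschitz constants of the action, and observe that a $G$-cover is automatically a $G'$-cover. The only genuine difference is in the closed-from-open direction of (3): you invoke the Lebesgue number lemma to shrink the $U_i$'s uniformly, whereas the paper argues more explicitly — for each $x$, pick a closed neighborhood $F_x$ inside some containing $U_i$, extract a finite subcover of the interiors, and group the $F_x$'s by which $U_i$ houses them. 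Your Lebesgue-number route is cleaner and equally valid (just make sure to use the version of the lemma that guarantees each ball $B(x,\lambda_0)$ lies in some $U_j$, or adjust the radius so the cover property actually follows); the paper's version avoids citing the named lemma at the cost of a bit more bookkeeping.
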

	
	\begin{proof}\leavevmode
		\begin{enumerate}
			\item Let $k=\cov{G}{K}$, we may assume $k<\infty$ otherwise there's nothing to prove. Thus there is an open $G$-cover $K=U_1\cup\dots\cup U_k$ such that $U_i\cap g (U_j)=\emptyset$ for all $i$ and all $g\in G\setminus\{\1\}$. Let $f:M\xrightarrow[]{G} K$, then $M=f^{-1}(U_1)\cup\dots\cup f^{-1}(U_k)$ gives an open cover for $M$. Moreover, for each $i$, and $g\in G\setminus\{\1\}$, we have $$f(\left( f^{-1}(U_i)\cap g(f^{-1}(U_i))\right)\subset U_i\cap g(U_i)=\emptyset$$ so the sets $f^{-1}(U_i)$ are an open $G$-cover for $M$. Therefore, $\cov{G}{M}\leq k=\cov{G}{K}$. 
			
			\item We just need to check that $\phi$ \textit{maps edges to edges}. Take $x\sim y$ in $\Ggraph{G}{X,\epsilon}$, so $x,y\in X$ and there is some $g\in G, g\neq\1$ s.t.\ $\dist[M]{x,gy}\leq\epsilon$. Since $\phi$ is Lipschitz with constant $\lambda$, then $$\dist[K]{\phi(x),g\phi(y)}=\dist[K]{x,\phi(gy)}\leq\lambda\dist[M]{x,gy}\leq\lambda\epsilon,$$ and so $\phi(x)\sim\phi(y)$ in $\Ggraph{G}{\phi(X),\lambda\epsilon}$. Thus $\phi$ induces a graph homomorphism.  
			
			\item We may assume both $\cov{G}{M}$ and $\overline{\cov{G}{M}}$ are finite. If one of them is finite, the following proof gives that the other must be finite as well. Otherwise, they are both infinite.  
			
			\begin{itemize}
				\item Let $k=\overline{\cov{G}{M}}$. Write $M=F_1\cup\dots\cup F_k$, where the $F_i$'s are a closed $G$-cover. Note $F_i\times F_i$ is compact, and for each $g\neq\1$, the map $\dist{\cdot,g(\cdot)}: F_i\times F_i \to \R^+$ is continuous (it is indeed positive, because $F_i\cap g(F_i)=\emptyset$). Since $b_{i,g}:=\min\{\dist{x,g(y)}: x,y\in F_i\}$ is attained, it must be that $b_{i,g}>0$. Let $b=\min\{b_{i,g}: 1\leq i\leq k, g\in G, g\neq\1\}$, note $b>0$. 
				
				Since $G$ acts via Lipschitz maps, let $\lambda_g$ be the corresponding Lipschitz constant for the map $g\in G\setminus\{\1\}$. Let $D=1+\max\{\lambda_g: g\in G, g\neq\1\}$. For each $i=1,\dots,k$ define $U_i=\bigcup_{x\in F_i} \text{int}\left(B(x,b/D)\right)$. Thus the $U_i$'s are open, and since $F_i\subset U_i$, they also cover $M$. Now we prove the $U_i$'s are a $G$-cover.
				
				Suppose by way of contradiction that $y\in U_i\cap g(U_i)$. So there is $y'\in U_i$ with $y=g(y')$. By definition of $U_i$, there are $x,x'\in F_i$ such that $\dist{x,y}<b/D$ and $\dist{x',y'}<b/D$. Then%
				\begin{align*}
					b\leq b_{i,g}\leq \dist{x,g(x')} &\leq \dist{x,y}+\dist{y,g(x')} = \dist{x,y}+\dist{g(y'),g(x')}\\
					&\leq \dist{x,y} + \lambda_g\dist{y',x'} < \frac{b}{D}+\lambda_g\frac{b}{D}=\frac{1+\lambda_g}{D}b\leq b
				\end{align*}		
				But $b<b$ is a contradiction, so $U_i\cap g(U_i)=\emptyset$ for all $i$, $g\neq\1$. Therefore, \mbox{$\cov{G}{M}\leq\overline{\cov{G}{M}}$}. 
				
				\item Let $k=\cov{G}{M}$. Write $M=U_1\cup\dots\cup U_k$, where the $U_i$'s are an open $G$-cover. For each $x\in M$, choose a superset $x\in U_i$, and let $F_x$ be a closed neighborhood of $x$ contained in $U_i$. The collection $\{\text{interior}(F_x):x\in M\}$ is an open cover of $M$, by compactness, there is a finite subcover $\mathcal{C}$.
				
				For each $i=1,\dots, k$, let $F_i=\bigcup F_x$ taking the union over the closed sets such that $\text{interior}(F_x)\in\mathcal{C}$ and $F_x\subset U_i$. Thus $F_1, \dots, F_k$ is a closed cover of $M$. Since $F_i\subset U_i$, trivially $F_i\cap g(F_i)=\emptyset$ for all $i$, $g\in G$, $g\neq\1$. Therefore $\overline{\cov{G}{M}}\leq \cov{G}{M}$.
			\end{itemize}
			\item Note any $G$-cover of $M$, will also be a $G'$-cover of $M$, so trivially $\cov{G'}{M}\leq\cov{G}{M}$.  \qedhere
		\end{enumerate}	
	\end{proof}
	
	In the remainder of this paper we will only work on compact $G$-spaces. Theorem \ref{thm:properties_cov} establishes that the open and closed $G$-covering numbers coincide for compact spaces, so we will always denote it simply by $\cov{G}{M}$, even though most of the times we will produce \textbf{closed} $G$-covers. 
	
	The next Theorem explains the relation between the $G$-covering of a space and the chromatic number of a $G$-Borsuk graph on the same space. We choose to state this relation via two separate inequalities, since for some of the later results, we will only require one or the other. Theorem~\ref{thm:rel_graph_cov_intro} will then follow immediately. 
	\begin{theorem}\label{thm:rel_graph_cov}
		Let $G$ be a finite group, and $M$ a compact metric $G$-space s.t.\ $G$ acts on $M$ via Lipschitz maps. Then the following hold. 
		\begin{enumerate} 
			\item There exists a constant $D$ s.t.\ if $X\subset M$ is an $(\epsilon/D)$-net, we have $$\cov{G}{M}\leq\chi(\Ggraph{G}{X,\epsilon}).$$
			
			\item There exists a constant $\epsilon_0>0$ s.t.\ for any $X\subset M$, and any $0<\epsilon<\epsilon_0$ we have $$\chi(\Ggraph{G}{X,\epsilon})\leq\cov{G}{M}.$$
		\end{enumerate}
	\end{theorem}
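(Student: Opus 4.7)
The plan is to prove the two inequalities by the same thickening-and-restriction idea that already appears in Theorem \ref{thm:properties_cov}(3): Part 2 converts a fixed closed $G$-cover into a coloring of the graph, and Part 1 thickens the color classes of a coloring back into an open $G$-cover. Both directions reduce to triangle-inequality estimates whose only nontrivial ingredient is the Lipschitz property of the $G$-action on $M$.

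For Part 2, I would fix a minimum closed $G$-cover $M=F_1\cup\dots\cup F_k$ with $k=\cov{G}{M}$, available by Theorem \ref{thm:properties_cov}(3). Since each $F_i$ is compact and $F_i\cap gF_i=\emptyset$ for every $g\neq\1$, the continuous function $(x,y)\mapsto\dist{x,gy}$ attains a strictly positive minimum $b_{i,g}>0$ on $F_i\times F_i$. Set $\epsilon_0:=\min_{i,\,g\neq\1}b_{i,g}$. For any $X\subset M$ and $\epsilon<\epsilon_0$, color each $x\in X$ by some $c(x)=i$ with $x\in F_i$. If $x\sim y$ were a monochromatic edge, both points would lie in a common $F_i$ with $\dist{x,gy}\leq\epsilon<b_{i,g}$ for some $g\neq\1$, directly contradicting the minimality of $b_{i,g}$. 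Hence $c$ is a proper $k$-coloring.

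For Part 1, let $\lambda_g$ be the Lipschitz constant of $g\colon M\to M$ and set $D:=1+\max_{g\neq\1}\lambda_g$, exactly as in Theorem \ref{thm:properties_cov}(3). Given a proper $k$-coloring $c$ of $\Ggraph{G}{X,\epsilon}$ on an $(\epsilon/D)$-net $X$, put $F_i:=c^{-1}(i)$ and $U_i:=\bigcup_{x\in F_i}B(x,\epsilon/D)$. The net property forces $\bigcup_i U_i=M$, so everything reduces to verifying $U_i\cap gU_i=\emptyset$. If $y$ lay in this intersection, there would be $x,x'\in F_i$ with $\dist{x,y}<\epsilon/D$, and (using that $g$ is $\lambda_g$-Lipschitz, which pushes $B(x',\epsilon/D)$ inside $B(gx',\lambda_g\epsilon/D)$) with $\dist{gx',y}<\lambda_g\epsilon/D$. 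The triangle inequality then yields $\dist{x,gx'}<(1+\lambda_g)\epsilon/D\leq\epsilon$, so $x\sim x'$ in $\Ggraph{G}{X,\epsilon}$, which is impossible as $c(x)=c(x')$ whenever $x\neq x'$.

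The one delicate case, which I expect to be the main obstacle, is the degenerate possibility $x=x'$ in that last step: we would conclude $\dist{x,gx}<\epsilon$, i.e.\ a loop at $x$ in the $G$-Borsuk graph. Such a loop is ruled out precisely by the assumption that the graph admits a proper coloring (a looped vertex cannot be colored), so the case cannot arise when $\chi(\Ggraph{G}{X,\epsilon})<\infty$; when $\chi=\infty$ the claimed inequality is vacuous. Combining Parts 1 and 2 with the same $D$ and any $\epsilon<\epsilon_0$ immediately yields Theorem \ref{thm:rel_graph_cov_intro}.
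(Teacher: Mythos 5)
Your proposal is correct and follows essentially the same argument as the paper: Part~2 colors a point by (the index of) a closed set in a minimum closed $G$-cover and uses compactness to get a uniform lower bound on separation; Part~1 thickens color classes into open balls of radius $\epsilon/D$ and shows the resulting open sets form a $G$-cover via the triangle inequality plus the Lipschitz bound. The only cosmetic difference is framing: the paper runs Part~1 as a contradiction (``suppose a $(k-1)$-coloring existed, build a $(k-1)$-element $G$-cover, contradiction''), while you directly build a $\chi$-element $G$-cover from a proper coloring. These are logically equivalent.

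Your discussion of the degenerate case $x=x'$ is a genuine subtlety that the paper glosses over: as stated, Part~1 has no upper bound on $\epsilon$, and if $\epsilon$ is allowed to be huge a singleton $(\epsilon/D)$-net gives a loop-free graph with $\chi=1$ while $\cov{G}{M}\geq|G|\geq 2$, so the inequality would fail. Your fix (reading $\dist{x,gx}\leq\epsilon$ as producing a loop, which makes $\chi=\infty$ and renders the claim vacuous) is the right repair, and it is worth noting that the paper's own sentence ``$v\sim v'$ are adjacent and have the same color, which is absurd'' implicitly covers the case $v=v'$ only under that same reading of loops. One could equivalently observe that when $\epsilon$ is small relative to $b=\min_{x\in M,\,g\neq\1}\dist{x,gx}$ (which is positive since the action is free and $M$ compact), the case $v=v'$ simply cannot arise because it would force $\dist{x,g^{-1}x}<2\epsilon/D$; this is the regime where the theorem is actually applied in the rest of the paper.
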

	
	\begin{proof}
		Let $k=\cov{G}{M}$, so we can write $M=F_1\cup\dots\cup F_k$, where the $F_i$'s are a closed $G$-cover. Using the same notation as in the proof of Theorem \ref{thm:properties_cov} item 3, there exists $b,D>0$ such that $$b\leq \dist{x,g(y)}\; \forall x,y\in F_i,\; i=1,\dots, k,\; g\in G\setminus\{\1\},\text{ and}$$
		$$\dist{g(x),g(y)}\leq (D-1)\dist{x,y}\; \forall x,y\in M,\; g\in G\setminus\{\1\}.$$
		
		\begin{enumerate}
			\item Let $X\subset M$ be a $(\epsilon/D)$-net of $M$. Suppose by way of contradiction that there is a proper $(k-1)$-coloring for $\Ggraph{G}{X,\epsilon}$, and fix such a coloring. For each color $i=1,\dots, k-1$ define%
			$$U_i=\bigcup_{\substack{v\in X \\ v \text{ has color }i}}\text{int}\left(B\left(v,\frac{\epsilon}{D}\right)\right).$$
			Since $X$ is an $(\epsilon/D)$-net of $M$, the $U_i$'s are an open cover of $M$. Suppose there is an index $i$ and a non-identity $g\in G$ such that $x\in U_i\cap g(U_i)$. Thus $x=g(x')$ for some $x'\in U_i$. By definition, there are vertices $v,v'\in X$ of color $i$ such that $\dist{x,v}\leq\epsilon/D$ and $\dist{x',v'}\leq\epsilon/D$. But then%
			\begin{align*}
				\dist{v,g(v')} &\leq \dist{v,x}+\dist{x,g(v')}=\dist{v,x}+\dist{g(x'),g(v')}\\
				&\leq \dist{v,x}+ (D-1)\dist{x',v'}\leq \frac{\epsilon}{D}+(D-1)\frac{\epsilon}{D} =\epsilon
			\end{align*}
			So $v\sim v'$ are adjacent in $\Ggraph{G}{X,\epsilon}$, and they both have the same color $i$, which is absurd. Hence $U_i\cap g(U_i)=\emptyset$ for all $i$ and all $g\in G\setminus\{\1\}$. But this is a $(k-1)$ open cover of $M$, which is a contradiction since $\cov{G}{M}=k$. Therefore $k\leq\chi(\Ggraph{G}{X,\epsilon})$.
			
			\item Define $\epsilon_0=b$, and let $0<\epsilon<\epsilon_0$. Let $X\subset M$. We can $k$-color the graph $\Ggraph{G}{X,\epsilon}$ via the coloring $c:X\to\{1,\dots, k\}$ given by $c(v)=\min\{i: v\in F_i\}$. Indeed, if $v,w\in X$ have $c(v)=c(w)=i$, then $v,w\in F_i$, so $\dist{v,g(w)}\geq b>\epsilon$ for all $g\in G\setminus\{\1\}$. Hence $v\not\sim w$, and $c$ is a proper $k$-coloring. Therefore $\chi(\Ggraph{G}{X,\epsilon})\leq k$.\qedhere
		\end{enumerate}		
	\end{proof}
	
	We finish this section on general properties, by computing the clique number of $G$-Borsuk graphs. In a sense, this can be seen as a generalization of Lemma 2.2 in \cite{Kahle-Martinez2020}, regarding the odd-girth of $\Z_2$-Borsuk graphs. 
	
	\begin{lemma}\label{lemma:clique_number}
		Let $M$ be a compact $G$-space such that $G$ acts via Lipschitz maps. Then, there exist constants $\epsilon_0$ and $D$ such that, whenever $X\subset M$ is a $(\epsilon/D)$-net of $X$ and $\epsilon<\epsilon_0$, then $\omega\left(\Ggraph{G}{X,\epsilon}\right)=|G|$. 
	\end{lemma}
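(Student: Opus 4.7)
I would prove the two inequalities $\omega(\Ggraph{G}{X,\epsilon}) \geq |G|$ and $\omega(\Ggraph{G}{X,\epsilon}) \leq |G|$ separately, reusing the constants introduced in the proof of Theorem \ref{thm:properties_cov}(3) and Theorem \ref{thm:rel_graph_cov}. Let $\lambda_g$ denote the Lipschitz constant of $g:M\to M$ and put $D=1+\max_{g\neq\1}\lambda_g$. Since the action is free and $M$ is compact, each function $x\mapsto\dist{x,gx}$ is continuous and strictly positive, so $b':=\min_{g\neq\1}\min_{x\in M}\dist{x,gx}>0$; I would then take $\epsilon_0=b'/(2D)$ (any smaller choice suffices).

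\paragraph{Lower bound via an approximate orbit.} Fix an arbitrary $y\in M$. Using that $X$ is an $(\epsilon/D)$-net, for each $g\in G$ choose $v_g\in X$ with $\dist{gy,v_g}\leq\epsilon/D$. For $g_1\neq g_2$, the orbit separation bound $\dist{g_1y,g_2y}\geq b'$ together with $\epsilon<\epsilon_0$ forces $v_{g_1}\neq v_{g_2}$, so $\{v_g:g\in G\}$ has $|G|$ distinct elements. Setting $h=g_1g_2^{-1}\in G\setminus\{\1\}$, a triangle inequality plus the Lipschitz bound
\[
\dist{v_{g_1},hv_{g_2}}\leq\dist{v_{g_1},g_1y}+\lambda_h\dist{v_{g_2},g_2y}\leq\tfrac{\epsilon}{D}+(D-1)\tfrac{\epsilon}{D}=\epsilon
\]
shows $v_{g_1}\sim v_{g_2}$, so the orbit approximation is a clique of size $|G|$.

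\paragraph{Upper bound via pigeonhole.} Suppose, for contradiction, that $v_0,v_1,\dots,v_{|G|}\in X$ form a clique. For each $i\geq 1$ pick $g_i\in G\setminus\{\1\}$ with $\dist{v_0,g_iv_i}\leq\epsilon$. There are $|G|$ indices $i$ but only $|G|-1$ non-identity group elements, so pigeonhole yields $i\neq j$ with $g_i=g_j=:g$. Then $\dist{gv_i,gv_j}\leq 2\epsilon$, and applying $g^{-1}$ (Lipschitz with constant $\leq D-1$) gives $\dist{v_i,v_j}\leq 2(D-1)\epsilon$. Since $v_i\sim v_j$, there exists $h\in G\setminus\{\1\}$ with $\dist{v_i,hv_j}\leq\epsilon$, whence
\[
\dist{v_j,hv_j}\leq\dist{v_j,v_i}+\dist{v_i,hv_j}\leq(2D-1)\epsilon<b',
\]
contradicting the minimum orbit separation. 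Hence no clique of size $|G|+1$ exists.

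\paragraph{Main obstacle.} The delicate part is the pigeonhole step in the upper bound: the argument only works because the distance distortion produced by a single application of a group element is controlled by $D-1$, and because the compactness/freeness of the action produces a uniform lower bound $b'$ for orbit separation. Balancing $\epsilon_0$ against both $b'$ and $D$ so that the two inequalities hold simultaneously, and checking that no net hypothesis is secretly needed for the upper bound, is where one has to be careful; once these constants are set up correctly the rest is a routine triangle-inequality computation.
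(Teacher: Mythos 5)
Your proof is correct and takes essentially the same approach as the paper's: you build an $|G|$-clique by selecting net points that approximate a single $G$-orbit, and you exclude an $(|G|+1)$-clique by pigeonholing the group element witnessing each edge from $v_0$ and then deriving a contradiction with the uniform orbit-separation lower bound $b'$. Your constants are marginally tighter (the paper uses $D=2\max\{\lambda,\lambda^2\}+2$ and routes the upper-bound triangle inequality through the point $g_{k_i}^{-1}x_0$, whereas you use $D=1+\max_{g\neq\1}\lambda_g$ and bound $\dist{v_i,v_j}$ directly via the Lipschitz constant of $g^{-1}$), but the underlying argument is the same.
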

	
	\begin{proof}
		Let $b=\min\{\dist{x,g(x)}: x\in M, g\in G, g\neq\1\}$. Note $b>0$ since the $G$-action is free and $M$ is compact. Let $m=|G|$ and enumerate $G=\{g_1=\1, g_2, \dots, g_m\}$. Let $\lambda$ be the largest Lipschitz constant of the maps $g_i$. Then set $D=2\max\{\lambda,\lambda^2\}+2$ and $\epsilon_0=b/D$. Suppose now that $X\subset M$ is an $(\epsilon/D)$-net with $0<\epsilon<\epsilon_0$. Let $z\in M$ be any point, thus for each $i=1,\dots, m$, there is a point $x_i\in X$ such that $\dist{x_i,g_iz}\leq \epsilon/D$. Then, for any $i\neq j$ we have%
		\begin{align*}
			\dist{x_i, (g_ig_j^{-1})x_j}& \leq \lambda\dist{g_i^{-1}x_i,g_j^{-1}x_j}\leq \lambda\left(\dist{g_i^{-1}x_i,z}+\dist{g_j^{-1}x_j,z}\right)\\
			&\leq \lambda^2\left(\dist{x_i,g_ix_i}+\dist{x_j,g_jz}\right)\leq \frac{\lambda^2\epsilon}{D}<\epsilon.			
		\end{align*}
		Since $g_ig_j^{-1}\neq\1$ whenever $i\neq j$, $x_i\sim x_j$, and so $\{x_1,\dots, x_m\}$ is a $m$-clique in $\Ggraph{G}{X,\epsilon}$.
		
		On the other hand, suppose by way of contradiction that $x_0, \dots, x_m\in X$ is a $(m+1)$-clique. So, in particular,  for each $1\leq i\leq m$, there is an index $k_i\neq 1$ s.t. $\dist{x_0,g_{k_i}x_i}\leq \epsilon$. Hence there must be two different values $i\neq j$, such that $g_{k_i}=g_{k_j}$; fix these $i$ and $j$. Since $x_i\sim x_j$, there must also be an element $g\in G, g\neq\1$ such that $\dist{x_j, gx_i}\leq\epsilon$.  Then, setting $y=g_{k_i}^{-1}x_0=g_{k_j}^{-1}x_0$, we get%
		\begin{align*}
			\dist{x_i,gx_i} &\leq \dist{x_i,y}+\dist{x_j,y}+\dist{x_j,gx_i}\\
			&=\dist{x_i,g_{k_i}^{-1}x_0}+\dist{x_j,g_{k_j}^{-1}x_0}+\dist{x_j+gx_i}\\
			&\leq \lambda\dist{g_{k_i}x_i,x_0}+\lambda\dist{g_{k_j}x_j,x_0}+\epsilon\leq (2\lambda+1)\epsilon\leq (2\lambda+1)\epsilon_0<b
		\end{align*}
		But this is a contradiction, so there are not $(m+1)$-cliques, and $\omega(\chi(\Ggraph{G}{X,\epsilon}))=m=|G|$. 
	\end{proof}

	\section{Lower Bound}\label{section:lower_bound}
	In this section we focus on proving the lower bound of Theorem \ref{thm:inequalieties_cov_zm}. This result is topological, by bounding the $G$-index of the cellular structure of $\hom(K_m,H)$, following ideas of \cite{Lovasz1978}, \cite{BabsonKozlov2003} and \cite{BabsonKozlov2006}. It is worth noting, that similar $G$-actions on Hom-Posets have been independently studied in \cite{Danesh2018}. We start by restating the result we will prove. 
	
	\begin{theorem}\label{thm:lower_bound_cov}
		Let $G$ be a finite group and $K$ a geometric $G$-simplicial complex. Let $k=\ind{G}{K}$. Then $k+|G|\leq\cov{G}{K}$. 
	\end{theorem}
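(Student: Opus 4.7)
The plan is to generalize Lov\'asz's Hom-complex argument from $G=\Z_2$ to arbitrary finite $G$. Write $n = \cov{G}{K}$; since $\ind{G}{K} = k$, it suffices to produce a $G$-map from $K$ into a free $G$-simplicial complex of dimension at most $n - |G|$ and invoke Theorem~\ref{thm:properties-G-index}(5). Label the vertices of $K_{|G|}$ by the elements of $G$ and let $G$ act on $K_{|G|}$ by left-multiplication; this yields a functorial precomposition $G$-action on $\hom(K_{|G|},\cdot)$, defined by $(h\cdot\phi)(v)=\phi(h^{-1}v)$.

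For the combinatorial side, a cell of $\hom(K_{|G|},K_n)$ is a tuple $(S_g)_{g\in G}$ of pairwise disjoint nonempty subsets of $[n]$, on which $h\in G$ acts by $(S_g)_g\mapsto(S_{h^{-1}g})_g$. An $h$-fixed cell would force $S_g=S_{h^{-1}g}$ for some $g\neq h^{-1}g$, contradicting disjointness; hence the action is free. Its dimension is $\sum_g(|S_g|-1)\leq n-|G|$, so (after a barycentric subdivision if one insists on a strict simplicial structure) $\ind{G}{\hom(K_{|G|},K_n)}\leq n-|G|$. For the geometric side, pick $X\subset K$ a sufficiently fine net and $\epsilon>0$ sufficiently small so that Theorem~\ref{thm:rel_graph_cov} gives $\chi(\Ggraph{G}{X,\epsilon})=n$, and fix a proper $n$-coloring $c:\Ggraph{G}{X,\epsilon}\to K_n$; this induces a $G$-simplicial map $c_* : \hom(K_{|G|},\Ggraph{G}{X,\epsilon})\to\hom(K_{|G|},K_n)$. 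The heart of the proof is then constructing a continuous $G$-equivariant map $\Phi : K\to\hom(K_{|G|},\Ggraph{G}{X,\epsilon})$; post-composing with $c_*$ yields the required $G$-map and $k\leq n-|G|$ follows at once.

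To build $\Phi$, fix $\delta>0$ with $\delta(1+L)\leq\epsilon$ (where $L$ is a uniform Lipschitz constant for the $G$-action) and $2\delta < b := \min\{\dist{z,hz}: z\in K,\ h\in G\setminus\{\1\}\}$, which is positive by compactness and freeness. For $z\in K$ and $g\in G$, set $S_g(z)=\{v\in X : \dist{v,g^{-1}z}<\delta\}$. These sets are nonempty (since $X$ is $\delta$-dense in $K$), pairwise disjoint (since $\delta<b/2$), and for $v\in S_g(z)$, $v'\in S_{g'}(z)$ with $g\neq g'$, the triangle inequality applied at $g^{-1}z=(g^{-1}g')(g')^{-1}z$ gives $\dist{v,(g^{-1}g')v'}<\delta(1+L)\leq\epsilon$, so $v\sim v'$ in the Borsuk graph via the element $g^{-1}g'\neq\1$. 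Hence $(S_g(z))_g$ is a cell of $\hom(K_{|G|},\Ggraph{G}{X,\epsilon})$, and we let $\Phi(z)$ be the point of that cell whose $g$-th barycentric coordinate at $v$ is proportional to $\max(0,\delta-\dist{v,g^{-1}z})$. Continuity is immediate from the continuity of these weights, and a direct substitution shows $\Phi(hz)=h\cdot\Phi(z)$.

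The main obstacle is this last step: defining $\Phi$ so that it is simultaneously continuous, $G$-equivariant, and genuinely valued in the Hom-complex. Once the $G$-actions on $K_{|G|}$ (left-multiplication) and on Hom-complexes (precomposition) are fixed consistently and the parameters $\delta,\epsilon,L,b$ are balanced as above, the remainder of the argument, namely freeness and the dimension count for $\hom(K_{|G|},K_n)$ and the functoriality of $c_*$, is routine, and the lower bound $\cov{G}{K}\geq\ind{G}{K}+|G|$ follows.
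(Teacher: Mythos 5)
Your proposal is correct and reaches the same conclusion through the same high-level strategy as the paper (map $K$ equivariantly into $\hom(K_{|G|},\cdot)$, push forward along a proper coloring, and read off the dimension bound), but the construction of the central $G$-map $\Phi$ is genuinely different. The paper first replaces $K$ by a fine $G$-triangulation $T$, replaces the $G$-Borsuk graph by a combinatorial graph $H$ on $V(T)$, observes that $\Psi_\sigma := \prod_{g\in G} g(\sigma)$ is a contractible subcomplex of $\hom(K_{|G|},H)$, and builds $\Phi$ by induction on the skeleta of $T$, extending over one $G$-orbit of $(d+1)$-cells at a time using the $(d-1)$-connectivity of $\Psi_\sigma$. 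You instead give a single closed-form formula: the $g$-component of $\Phi(z)$ is a probability distribution on $S_g(z)=\{v\in X:\dist{v,g^{-1}z}<\delta\}$ with weights $\propto\max(0,\delta-\dist{v,g^{-1}z})$, a nerve-type partition-of-unity construction applied directly on $K$ and the $G$-Borsuk graph $\Ggraph{G}{X,\epsilon}$, bypassing the need for a $G$-triangulation and the skeleton induction entirely. The trade-off: your formula makes continuity and equivariance transparent by direct substitution, and is closer in spirit to Lov\'asz's original construction; the paper's skeleton-extension argument is more modular and doesn't require juggling the parameter constraints $\delta(1+L)\leq\epsilon$, $\delta<b/2$, $\delta\leq\epsilon/D$, $\epsilon<\epsilon_0$ that your balancing needs (all of which can indeed be met simultaneously). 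Two small remarks on rigor, neither fatal: your freeness argument as stated shows only that no cell of $\hom(K_{|G|},K_n)$ is $h$-fixed, but the disjointness of the $S_g$ yields the stronger $\sigma\cap h\sigma=\emptyset$ directly (the same observation the paper makes), which is what Theorem~\ref{thm:properties-G-index}(5) actually requires after the barycentric subdivision you invoke; and you use left multiplication on $K_{|G|}$ with $(h\cdot\phi)(v)=\phi(h^{-1}v)$ where the paper uses right multiplication with $(g\sigma)_a=\sigma_{ag}$ — both are left $G$-actions, and the choice is immaterial.
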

	
	Given two graphs $H_1$ and $H_2$, the set of graph homomorphisms $\hom(H_1,H_2)$ can be endowed with a topology as a prodsimplicial complex, as introduced by Babson and Kozlov in \cite{BabsonKozlov2003}. Let us explain this structure. 
	
	\begin{defi}
		A cell complex is called \textbf{prodsimplicial} if all its cells are products of simplices. I.e. if all of its cells are of the form $\sigma=\sigma_1\times\dots\times\sigma_k$, where $\sigma_i$ is a $d_i$-simplex. \textbf{Notation} instead of using the $``\times''$ symbol, we will write $\sigma=(\sigma_1,\dots, \sigma_k)$. 
	\end{defi}
	
	\begin{defi}
		Given $H_1$ and $H_2$, $\hom(H_1,H_2)$ is a prodsimplicial complex with cells $\sigma$ \mbox{satisfying}: 
		\begin{enumerate}
			\item $\sigma=\prod_{x\in V(H_1)}\sigma_x$, where $\sigma_x\subset V(H_2)$. That is, $\sigma$ is the direct product of simplices on the vertices of $H_2$, indexed by the vertices of $H_1$, and
			\item If $\phi:V(H_1)\to V(H_2)$ is such that $\phi(x)\in\sigma_x$ for all $x\in V(H_2)$, then it extends to a graph homomorphism $\phi:H_1\to H_2$. 
		\end{enumerate}
	\end{defi}
	
	We highlight the following points:
	\begin{itemize}
		\item The cells of $\hom(H_1,H_2)$ can be though of as indexed by functions $\eta:V(H_1)\to\left(2^{V(H_2)}\setminus\emptyset\right)$, such that if $\{x,y\}\in E(H_1)$ then $\eta(x)\times\eta(y)\subset E(H_2)$. 
		\item The dimension of the cell $\sigma$ is given by $\sum_{x\in V(G)}\text{dim}(\sigma_x)$. Thus the vertices (0-cells) of $\hom(H_1,H_2)$ are precisely the graph homomorphisms $\eta:G\to H$. 
		\item The 1-skeleton can be seen as a graph, with vertices the graph homomorphisms and edges $\phi\sim\psi$ if and only if they differ at exactly one value. 
	\end{itemize}
	
	\begin{lemma}\label{lemma:dim_hom_km_kd}
		Let $K_m$ denote the complete graph with $m$ vertices. If $m\leq t$, then%
		$$\dim\left(\hom(K_m,K_t)\right)=t-m$$
	\end{lemma}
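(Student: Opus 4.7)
The plan is to unpack the prodsimplicial cell structure of $\hom(K_m,K_t)$ and translate the edge condition into a disjointness condition on the simplex supports.

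First, I would identify the cells explicitly. A cell $\sigma$ of $\hom(K_m,K_t)$ is indexed by a function $\eta: V(K_m)\to 2^{V(K_t)}\setminus\{\emptyset\}$ such that for every edge $\{x,y\}\in E(K_m)$ we have $\eta(x)\times\eta(y)\subset E(K_t)$. Since $K_m$ is complete, this condition applies to every pair $x\neq y$ in $V(K_m)$. Since the edges of $K_t$ are exactly the pairs of distinct vertices, the condition $\eta(x)\times\eta(y)\subset E(K_t)$ is equivalent to $u\neq v$ for all $u\in\eta(x)$, $v\in\eta(y)$, i.e.\ $\eta(x)\cap\eta(y)=\emptyset$. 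Therefore the cells of $\hom(K_m,K_t)$ are in bijection with assignments $\{\eta(x)\}_{x\in V(K_m)}$ of pairwise disjoint nonempty subsets of $V(K_t)$.

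Next, I would compute the dimension. By the definition of the prodsimplicial structure, the dimension of the cell indexed by $\eta$ is
\[
\dim(\sigma)=\sum_{x\in V(K_m)}\bigl(|\eta(x)|-1\bigr)=\Bigl(\sum_{x\in V(K_m)}|\eta(x)|\Bigr)-m.
\]
Since the sets $\eta(x)$ are pairwise disjoint subsets of $V(K_t)$, their sizes sum to at most $t$, with equality exactly when they partition $V(K_t)$. Hence $\dim(\sigma)\leq t-m$.

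Finally, I would exhibit a cell attaining this bound. Because $m\leq t$, we may partition the $t$ vertices of $K_t$ into $m$ nonempty blocks and assign one block to each vertex of $K_m$; this yields a valid cell of dimension exactly $t-m$. Combining the upper bound with this construction gives $\dim(\hom(K_m,K_t))=t-m$. No step here looks like a genuine obstacle; the only thing one has to be careful about is the translation from the abstract cell-indexing condition to the disjointness statement, which hinges on the fact that $K_t$ contains no loops.
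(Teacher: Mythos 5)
Your proof is correct and follows essentially the same route as the paper: translating the prodsimplicial cell condition into pairwise disjointness of the supports $\eta(x)\subset V(K_t)$, summing $|\eta(x)|-1$ to bound the dimension by $t-m$, and exhibiting a partition of $V(K_t)$ into $m$ nonempty blocks to achieve equality.
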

	
	\begin{proof}
		Let $\sigma$ be a cell of $\hom(K_m,K_t)$. Thus $\sigma=\prod_{i=1}^m\sigma_i=(\sigma_1,\dots, \sigma_m)$ where each $\sigma_i\subset\{1,\dots, t\}$. By definition, when $i\neq j$, for all possible choices $v_i\in\sigma_i$ and $v_j\in\sigma_j$, we must have $v_i\sim v_j$ in $K_t$. This is true iff $v_i\neq v_j$. Hence, $\sigma_i\cap \sigma_j=\emptyset$ for $i\neq j$. Then%
		$$\dim(\sigma)=\sum_{i=1}^m\dim(\sigma_i)=\sum_{i=1}^m(|\sigma_i|-1)=\left(\sum_{i=1}^m|\sigma_i|\right) -m =\left|\bigcup_{i=1}^m\sigma_i\right| -m \leq |\{1,\dots, t\}|-m=t-m.$$
		
		Moreover, the dimension is exactly $t-m$, since $\sigma=(1,2,3,\dots,m-1,\{m,m+1,\dots,t\})$ is a $(t-m)$-dimensional cell. 
	\end{proof}
	
	\subsection[Hom$(K_m,\cdot)$ as G-space]{$\bm{\hom(K_m,\cdot)}$ as $\bm{G}$-space.}
	Let $G$ be a finite group with $|G|=m$. Enumerate $G=\{g_1=\1,g_2,g_3,\dots,g_m\}$. By identifying the elements of $G$ with the vertices $\{1,\dots, m\}$ of the complete graph $K_m$, $G$ acts on $K_m$ via right multiplication. For a finite graph $H$, $G$ acts on $\hom(K_m,H)$ as follows:	If $g\in G$, and $\sigma=(\sigma_1,\dots, \sigma_m)$, then $g\sigma=(\sigma_{\pi(1)},\dots, \sigma_{\pi(m)})$, where $\pi(i)$ is the unique index such that $g_ig=g_{\pi(i)}$. Note that if $g\neq\1$, then $\pi(i)\neq i$ for all $i$. As we pointed out in the proof of Lemma \ref{lemma:dim_hom_km_kd}, if $\sigma$ is a cell in $\hom(K_m,H)$, it must be that $\sigma_i\cap\sigma_j=\emptyset$ for all $i\neq j$, in particular, 
	$$\sigma\cap g(\sigma)\subset(\sigma_{1}\cap \sigma_{\pi(1)},\dots,\sigma_m\cap\sigma_{\pi(m)})=\emptyset.$$
	
	So $G$ acts freely on $\hom(G,H)$ via cellular maps. Moreover, it is clear that this construction is functorial, so if $f:H\to H'$ is a graph homomorphism, then it induces a $G$-map $$f:\hom(K_m,H)\xrightarrow{G}\hom(K_m,H').$$
	
	\subsection{Proof of Theorem \ref{thm:lower_bound_cov}}
	Let $K$ be a free $G$-simplicial complex with $k=\ind{G}{K}$ and $m=|G|$. Fix $\epsilon>0$. Let $T$ be a finer triangulation of $K$, such that $T$ is also a $G$-simplicial complex and $\diam{\sigma}\leq\epsilon$ for all faces $\sigma\in T$. Let $H$ be the graph with vertex set $V(T)$ and edges $x\sim y$ whenever there exists $g\in G\setminus\{\1\}$ such that $\{x,g(y)\}\in T^{(1)}$. Note then $H$ is an induced subgraph of the $G$-Borsuk graph $\Ggraph{G}{V(T),\epsilon}$.
	
	Thus if $\epsilon<\epsilon_0$, for $\epsilon_0$ the constant given by Theorem \ref{thm:rel_graph_cov} item 2, we get 
	$$\chi(H)\leq\chi(\Ggraph{G}{V(T),\epsilon})\leq \cov{G}{K}.$$
	
	We will construct a $G$-map $\Phi:T\xrightarrow{G}\hom(K_m,H)$. Since $H\to K_{\chi(H)}$, the functoriality of the $G$-action on $\hom(K_m,\cdot)$ produces%
	$$ T\xrightarrow{G}\hom(K_m,H)\xrightarrow{G}\hom(K_m,K_{\chi(H)}).$$
	Finally, applying items 1 and 5 of Theorem \ref{thm:properties-G-index} and the dimension computed in Lemma \ref{lemma:dim_hom_km_kd}, we get
	\begin{align*}
		k&=\ind{G}{K}=\ind{G}{T}\leq\ind{G}{\hom(K_m,H)}\\
		&\leq\ind{G}{\hom(K_m,K_{\chi(H)}}\leq \dim(\hom(K_m,K_{\chi(H)}))\leq \chi(H)-m,
	\end{align*}
	so $k+m\leq\chi(H)\leq\cov{G}{K}$ as desired. From now on, we focus on defining the map $\Phi$. 
	
	Enumerate $G=\{g_1=\1,g_2,g_3,\dots, g_m\}$. Let $\sigma$ be a $d$-dimensional face of $K$, for any $0\leq d\leq\dim(K)$. Let $\Psi_\sigma=\prod_{g\in G} g(\sigma)=(\sigma,g_2(\sigma),g_3(\sigma),\dots, g_m(\sigma))$. Note $\Psi_\sigma$ is isomorphic to the contractible prodsimplicial complex $(\Delta_d)^m$, where $\Delta_d$ is the $d$-dimensional simplex, and the power $m$ refers to the Cartesian Product. 
	
	We claim $\Psi_\sigma$ is a subcomplex of $\hom(K_m,H)$. Indeed $g_i(\sigma)\subset V(H)$, so $\Psi_\sigma$ is the product of simplices in $V(H)$ indexed by the vertices of $K_m$. To prove our claim, we need that whenever $i\sim j$ in $K_m$, i.e. $i\neq j$, any choice $x\in g_i(\sigma)$ and $y\in g_j(\sigma)$ produces an edge $x\sim y$ in $H$. Pick any such $x$ and $y$, so $x=g_i(v)$ and $y=g_j(w)$ for some vertices $v,w\in \sigma$. Put $g=g_ig_j^{-1}$, then $$\{x,g(y)\}=\{g_i(v),g_i(w)\}=g_i(\{v,w\})$$ 
	Since $\{v,w\}\subset\sigma$ is an edge in $T$, so is $g_i(\{v,w\})$, and thus $x\sim y$ in $H$ as needed. 
	Moreover, note that for any $g\in G$,
	\begin{align*}
		g(\Psi_\sigma) 	&= g(g_1(\sigma),g_2(\sigma),\dots,g_m(\sigma))\\
		&=(g_{\pi(1)}(\sigma),g_{\pi(2)}(\sigma),\dots,g_{\pi(m)}(\sigma))\\
		&=(g_1g(\sigma),g_2g(\sigma),\dots,g_mg(\sigma))=\Psi_{g(\sigma)}.	
	\end{align*}
	Note also, if $\tau$ is a face of $\sigma$, $\Psi_\tau$ is a subcomplex of $\Psi_\sigma$, since for any $g\in G$, $g(\tau)\subset g(\sigma)$.\\
	
	Recall we say that a topological space $X$ is $n$-connected if for every $\ell=-1,0,\dots, n$, each continuous map $f:\S^\ell\to X$ can be extended to a continuous map $f:B^{\ell+1}\to X$. It is well known that any contractible space is $n$-connected for all $n\geq 0$. Recall the $d$-simplex $\Delta_d$ is homeomorphic to the closed ball $B^{d+1}$ and its boundary $\partial\Delta_d$ is homeomorphic to the $(d-1)$-sphere $\S^{d-1}$. Putting these together with the fact $\Psi_\sigma$ is contractible, we get the following tool:
	\begin{center}
		\textit{If $f:\partial \sigma \to \|\Psi_\sigma\|$ is a continuous map, then it can be extended into a map $f:\sigma\to\|\Psi_\sigma\|$. }
	\end{center}
	
	We now construct $\Phi$ by induction on the $d$-skeleton of $T$. 
	
	\begin{itemize}
		\item \textbf{Vertices: } For each vertex $v\in V(T)$, define $\Phi(v)=(v,g_2(v),\dots,g_m(v))=\Psi_{\{v\}}$.
		Clearly $\Phi(g(v))=\Psi_{\{g(v)\}}=g(\Psi_{\{v\}})=g(\Phi(v))$ as proved above. 
		\item \textbf{Induction Hypothesis: } Suppose that we have defined $\Phi: T^{(d)}\xrightarrow{G}\hom(K_m,H)$ for the $d$-skeleton of $T$, in such a way that $\Phi(\sigma)\subset\|\Psi_{\sigma}\|$. 
		\item \textbf{Inductive Step: } 
		Separate the $(d+1)$-cells of $T$ into $G$-orbits (this is possible because $G$ acts freely). Let $\mathcal{O}=G\sigma$ be one of such orbits, with $\sigma\in T^{(d+1)}$ a generator. Let $\tau_0,\tau_1,\dots,\tau_d$ be the $d$-faces of $\sigma$. By the induction hypothesis, $\Phi$ is defined such that $\Phi(\tau_i)\subset\|\Psi_{\tau_i}\|\subset \|\Psi_\sigma\|$. Thus $\Phi$ is defined as a map $$\Phi:\partial\sigma=\bigcup_{i=0}^d\tau_i\to\bigcup_{i=0}^d\|\Psi_{\tau_i}\|\subset\|\Psi_\sigma\|,$$ hence we can extend $\Phi$ to $\sigma$, such that $\Phi(\sigma)\subset\|\Psi_\sigma\|$. For each other $(d+1)$-cell $\omega\in\mathcal{O}$, there is a $g\in G$, such that $\omega=g(\sigma)$, so define%
		$$\Phi(\omega):=g(\Phi(\sigma))\subset g\left(\|\Psi_\sigma\|\right)\subset\|\Psi_{g\sigma}\|=\|\Psi_\omega\|.$$%
		Note this agrees on the $d$-faces of $\omega$, since we supposed $\Phi$ was a $G$-map on the $d$-skeleton of $T$. Finally, repeating this for all the $G$-orbits of $(d+1)$-cells, we get $\Phi:T^{(d+1)}\xrightarrow{G}\hom(K_m,H)$ satisfying the induction hypothesis. 
	\end{itemize}
	
	This produces the $G$-map $\Phi:T\xrightarrow{G}\hom(K_m,H)$ and finishes the proof. \hfill\qedsymbol
	\section{Upper Bounds}
	We now focus on the upper bound in Theorem \ref{thm:inequalieties_cov_zm}. Note that if $K$ is a $G$-space of $G$-index $k$, $K\xrightarrow{G}E_kG$, so by Theorem \ref{thm:properties_cov}, $\cov{G}{K}\leq\cov{G}{E_kG}=\cov{G}{k}$. Hence, we only need to prove an upper bound for finite $G$ classifying spaces. We do this by combining Theorem \ref{thm:recursive_upper_bound}, which provides a recursive upper bound, and Theorem \ref{thm:upper_bound_1dim}, which bounds $\cov{G}{1}$. We finish this section with Theorem \ref{thm:general_upper_bound}, which we use to compute upper bounds for $\cov{\Z_m}{d}$, for some values of $m$ and $d$, with the aid of a computer.   
	
	\begin{theorem}\label{thm:recursive_upper_bound}
		Let $G$ be a finite group and $d\geq 2$ an integer. Then $$\cov{G}{d}\leq\cov{G}{d-1}+|G|-1.$$
	\end{theorem}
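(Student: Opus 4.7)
The plan is to prove the recursive inequality by constructing an explicit closed $G$-cover of $E_dG$ from a closed $G$-cover of $E_{d-1}G$, adding exactly $|G|-1$ new sets. Since $\cov{G}{\cdot}$ depends only on the $G$-equivalence class (Theorem \ref{thm:properties_cov}), I fix the model $E_dG = G * E_{d-1}G = G * G^{*d}$, where $G$ denotes the $0$-dimensional $G$-space on $|G|$ vertices and the join carries the diagonal $G$-action. Every point of this join can be written as $(1-t)y + tg$ with $t \in [0,1]$, $y \in E_{d-1}G$, $g \in G$, with the representation unique whenever $t \in (0,1)$.

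Fix an enumeration $G = \{g_1 = \1, g_2, \dots, g_m\}$, pick $\delta \in (0,1)$, and start from a closed $G$-cover $F_1, \dots, F_k$ of $E_{d-1}G$ with $k = \cov{G}{d-1}$. For each $j = 2, \dots, m$ I would introduce the vertex cap
\[
B_j = \{(1-t)y + tg_j : y \in E_{d-1}G,\, t \in [1-\delta, 1]\},
\]
a closed neighborhood of the vertex $g_j$, giving $|G|-1$ sets. For each $i = 1, \dots, k$, define the asymmetric thickening $\tilde F_i = A_i \cup C_i$, where
\[
A_i = \{(1-t)y + tg : y \in F_i,\, g \in G,\, t \in [0, 1-\delta]\}, \qquad C_i = \{(1-t)y + tg_1 : y \in F_i,\, t \in [0,1]\};
\]
this is a fully $G$-symmetric cone thickening of $F_i$ up to level $1-\delta$, completed by the single cone from the distinguished vertex $g_1$ all the way to $t=1$. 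In total this produces $k + (|G|-1) = \cov{G}{d-1} + |G| - 1$ closed sets, each a continuous image of a compact product and hence closed in $E_dG$.

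The two things to verify are coverage and disjointness from $G$-translates. Coverage is immediate by casework on $t$: any point $(1-t)y+tg$ with $t \leq 1-\delta$ lies in $\tilde F_i$ via $A_i$ for any $i$ with $y \in F_i$; if $t > 1-\delta$ and $g = g_j$ with $j \geq 2$ it lies in $B_j$, and if $g = g_1$ it lies in $\tilde F_i$ via $C_i$. Disjointness for the caps is easy: $gB_j = B_{gg_j}$, and caps around distinct vertices are disjoint provided $\delta < 1$. For $\tilde F_i$ one writes $g\tilde F_i = gA_i \cup gC_i$ and inspects the four pairwise intersections with $A_i \cup C_i$. In each case, uniqueness of the join parametrization on $t \in (0,1)$ forces either matching $E_{d-1}G$-coordinates (requiring $y \in F_i \cap gF_i = \emptyset$) or matching vertex coordinates $g_1 = gg_1$ (requiring $g = \1$); the boundary $t = 0$ reduces again to $F_i \cap gF_i$, and $t = 1$ is reached only through $C_i$, where one side sits at $g_1$ and the other at $g \neq g_1$.

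The main obstacle I expect is not topological but the bookkeeping of join coordinates near the transition level $t = 1-\delta$. The naive ``two-hemispheres'' construction uses $|G|$ caps above the equator plus $k$ lifted sets below, yielding $\cov{G}{d-1} + |G|$; the savings of one come from absorbing the missing cap around $g_1$ into every $\tilde F_i$ simultaneously via the cones $C_i$. The nontrivial verification is that this asymmetric extension does not create overlap with a $G$-translate, and that follows because the distinguished vertex $g_1$ is the identity and its $G$-orbit is free, so any attempted match at $t=1$ between the $C_i$ side and a translated $A_i$ or $C_i$ side must identify $g_1$ with $g\cdot g_1 = g$, forcing $g=\1$.
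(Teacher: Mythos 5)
Your construction is the same one the paper uses: model $E_dG$ as $G * E_{d-1}G$, take the $|G|-1$ vertex caps around the non-identity vertices, and thicken each $F_i$ by an asymmetric cone that extends to the full cone at the distinguished vertex $\1$ but is truncated (at $1/2$ in the paper, $1-\delta$ in your version) toward the others; the disjointness verification via uniqueness of join coordinates and freeness at the apex $g_1$ is likewise the paper's argument. Correct, and essentially identical to the paper's proof.
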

	
	\begin{proof}
		Let $K=G*M$ where $M$ is a $(d-1)$-dimensional classifying space for $G$, so $K$ is a $d$-dimensional classifying space for $G$. The elements of $K$ can be described as linear combinations $tg\oplus (1-t)\mu$ where $g\in G, \mu\in M$ and $0\leq t\leq 1$, and thus $h\in G$ acts by
		$$h(tg\oplus (1-t)\mu)= hg\oplus (1-t)h(\mu).$$
		
		Let $k=\cov{G}{d-1}$, so there is a closed $G$-cover $M=F_1, \dots, F_k$. For each $i=1,\dots, k$, and each $g\in G, g\neq\1$, define the closed sets
		\begin{align*}
			F_{i,g}&=\left\{ tg\oplus (1-t)\mu\in K : \mu\in F_i, 0\leq t\leq 1/2 \right\}\text{ and }\\
			\bar{F}_{i,g}&=\left\{ tg \oplus (1-t)\mu\in K: \mu \in F_i, 1/2\leq t\leq 1 \right\}. 
		\end{align*}
		So $F_{i,g}\cup \bar{F}_{i,g}=g*F_i$. Also define the closed sets%
		\begin{align*}
			F_{i,\1}&=\left\{ t\1\oplus (1-t)\mu\in K: \mu \in F_i, 0\leq t\leq 1 \right\}=\1*F_i.
		\end{align*}
		Finally, let $m=|G|$ and enumerate $G=\{\1, g_1, g_2, \dots, g_{m-1}\}$. Define $E_1, \dots, E_k, E_{k+1}, \dots, E_{m+k-1}$ as follows
		\begin{align*}
			E_i &= \bigcup_{g\in G} F_{i,g} \text{ for } 1\leq i\leq k\text{ and}\\
			E_{k+i} &= \bigcup_{i=1}^k \bar{F}_{j,g_i}\text{ for }1\leq i\leq m-1. 
		\end{align*}
		Clearly the $(m+k-1)$ sets $E_i$ are closed and cover $K$. We claim they are a $G$-cover of $K$. 
		
		Suppose there is some index $1\leq i\leq k$ and some $h\in G, h\neq \1$ such that $E_i\cap hE_i\neq\emptyset$. Then, there must be some $g,g'\in G$ such that $x\in F_{i,g}\cap hF_{i,g'}\neq\emptyset$. Thus we can write $x$ as%
		$$x=tg\oplus (1-t)\mu=\tau hg'\oplus (1-\tau)h\nu,$$
		for some $\mu,\nu\in F_i$, $0\leq t,\tau\leq 1$. Then, it must be that $(1-t)\mu=(1-\tau)h\nu$. If $(1-t),(1-\tau)\neq 0$ we must have $\mu=h\nu$, but since $F_i\cap hF_i=\emptyset$, this is impossible. If instead $(1-t)=(1-\tau)=0$, then it has to be that $g=g'=\1$ and then $x=\1=h\1=h$, but $h\neq \1$ so this is impossible as well. 
		
		Suppose there is some index $1\leq i\leq m-1$, and some $h\in G, h\neq \1$ such that $E_{k+i}\cap hE_{k+i}\neq\emptyset$. Then, there must be some $1\leq j, \alpha\leq k$ such that $x\in \bar{F}_{j,g_i}\cap h\bar{F}_{\alpha,g_i}\neq\emptyset$. Thus we can write $x$ as%
		$$x=tg_i\oplus (1-t)\mu=\tau hg_i\oplus(1-\tau)h\nu,$$
		for some $\mu\in F_j$, $\nu\in F_\alpha$, $1/2\leq t, \tau\leq 1$. Then, it must be that $tg_i=\tau hg_i$, and since $t,\tau\neq 0$, we must have $g_i=hg_i$, but since $h\neq\1$, this is impossible. 
		
		Therefore $E_r\cap hE_r=\emptyset$ for all $1\leq r\leq k+m-1$ and all $h\in G, h\neq\1$ as desired. 	
	\end{proof}
	
	Now we state the upper bound for the 1-dimensional case. 
	\begin{theorem}\label{thm:upper_bound_1dim}
		Let $G$ a finite group. Then $$\cov{G}{1}\leq |G|+1.$$
	\end{theorem}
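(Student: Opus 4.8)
The plan is to produce, for one convenient geometric $E_1G$ space, an explicit closed $G$-cover by $m+1$ sets, where $m=|G|$; since all $E_1G$ spaces are $G$-equivalent, this suffices by Theorem~\ref{thm:properties_cov}. I would take $K:=G^{*2}=G*G$, i.e.\ the complete bipartite graph $K_{m,m}$ with left vertices $\ell_g$ and right vertices $r_h$ ($g,h\in G$) and edges $e_{g,h}=[\ell_g,r_h]$, where $G$ acts by left multiplication on each copy of $G$. Parametrizing $e_{g,h}$ by $t\in[0,1]$ with $\ell_g$ at $t=0$, the action fixes the parameter: $k$ sends the point at height $t$ on $e_{g,h}$ to the point at height $t$ on $e_{kg,kh}$. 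The edges fall into $m$ ``columns'' $O_q=\{e_{g,gq}:g\in G\}$ ($q\in G$), each a perfect matching on which $G$ acts freely and transitively. The key preliminary observation is that a closed set $F\subseteq\|K\|$ is $G$-safe precisely when it contains at most one $\ell$-vertex, at most one $r$-vertex, and for each column $O_q$ and each height $t$ it touches at most one edge of $O_q$ at that height.

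Next I would define the cover. Enumerate $G=\{g_1=\1,\dots,g_m\}$. The set $F_i$ ($1\le i\le m$) receives the whole ``diagonal'' edge $e_{g_i,g_i}$ (all of column $O_\1$), and in each column $O_q$ with $q\neq\1$ it receives a bottom portion $[0,\alpha]$ of the edge $e_{g_i,g_iq}$ (the one carrying $\ell_{g_i}$) and a top portion $[\beta,1]$ of the edge $e_{g_iq^{-1},g_i}$ (the one carrying $r_{g_i}$); for $F_i$ to stay $G$-safe inside $O_q$ these portions must lie at disjoint heights, so $\alpha<\beta$ must hold on the respective edges. Concretely every edge $e_{g,gq}$ gets its part near $\ell_g$ from the set indexed by $g$ and its part near $r_{gq}$ from the set indexed by $gq$, and the question is where to cut.

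The hard part will be that these cut conditions cannot all be met: fixing $q\neq\1$ and a coset $C$ of $\langle q\rangle$, the $|C|$ edges $\{e_{g,gq}:g\in C\}$ together with the $|C|$ sets $\{F_i:g_i\in C\}$ form a single $2|C|$-cycle under the bottom-owner/top-owner incidence, and along this cycle the requirements say ``the cut on $e_{g,gq}$ is strictly below the cut on $e_{gq^{-1},g}$'', a contradictory circular chain of inequalities. I would resolve this with the extra set $F_{m+1}$: in each such coset $C$ single out one edge and split it as $[0,\alpha]\to$(its bottom owner), $[\alpha,\gamma]\to F_{m+1}$, $[\gamma,1]\to$(its top owner) with $0<\alpha<\gamma<1$; deleting that one link turns the cycle into a path, and then the remaining inequalities are satisfiable by making the cut heights increase along the path, with $\alpha$ below them all and $\gamma$ above. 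It then remains to keep $F_{m+1}$ itself $G$-safe: in a single column $O_q$ it picks up one short segment per coset of $\langle q\rangle$, i.e.\ $m/\mathrm{ord}(q)$ of them, and I would simply place these segments at pairwise disjoint heights, all in the interior $(0,1)$ (finitely many, each as short as needed). Then each $F_i$ and $F_{m+1}$ is closed, satisfies the $G$-safety criterion above, and the $m+1$ sets cover every edge in every column, giving $\cov{G}{1}\le m+1$. The only genuinely delicate step is this symmetry-breaking via $F_{m+1}$ --- which is exactly why a single extra set over the trivial bound $m$ is both needed and enough, and why no ``uniform'' $m$-set construction can succeed.
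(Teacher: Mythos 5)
Your construction is correct, and at the top level it follows the same strategy as the paper: cover $\|G*G\|$ by one closed class per group element (anchored at the two vertices labelled by that element) plus a single extra class, reduce $G$-safety to a per-edge-orbit check (using that the action preserves each column $O_q$ and the height parameter along edges), and decompose each column into cycles, i.e.\ cosets of $\langle q\rangle$ --- exactly the cycle decomposition of the permutation $\sigma$ in the paper's proof. Where you genuinely diverge is in the execution of the key step. The paper first proves Lemmas~\ref{lemma:cover_circle} and~\ref{lemma:cover_cyclic} (the $(k+1)$-arc $\Z_k$-cover of $\S^1$, pulled back along a $\Z_k$-map to $\Z_k*\Z_k$ with prescribed vertex colors), and for an orbit splitting into $r$ cycles it subdivides every edge into $r$ positional subintervals, placing the ``active'' region of cycle $j$ at position $j$ so that cross-cycle conflicts are excluded by position-preservation. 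You instead work directly with cut heights: the circular chain of strict inequalities you exhibit is precisely the obstruction (it mirrors the lower bound $\cov{G}{1}\geq|G|+1$), you break it by donating one short interior segment per cycle to $F_{m+1}$ and ordering the remaining cuts monotonically along the resulting path, and you keep $F_{m+1}$ safe by placing its segments at pairwise distinct heights within each column. Your route is more elementary and self-contained (no circle lemma, no $r$-fold subdivision, and the extra color occupies only finitely many short segments), while the paper's route packages the within-cycle step as a reusable lemma at the cost of heavier subdivision bookkeeping; both are complete. In a final write-up you should state explicitly, as you gesture at, that $G*G$ is an $E_1G$ space and that closed and open covering numbers agree (Theorem~\ref{thm:properties_cov}), so the closed cover indeed bounds $\cov{G}{1}$.
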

	
	Combining this Theorem with the lower bound in Theorem \ref{thm:lower_bound_cov}, gives $\cov{G}{1}=|G|+1$. To prove it, we need to first establish some lemmas. Let us start proving the case when $G$ is a cyclic group acting on the 1-dimensional classifying space $\S^1$. 
	
	\begin{lemma}\label{lemma:cover_circle}
		Consider $\S^1$ as a $\Z_m$ space, then $$\cov{\Z_m}{\S^1}\leq m+1.$$
	\end{lemma}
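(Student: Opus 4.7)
The plan is to exhibit an explicit closed $\Z_m$-cover of $\S^1$ with exactly $m+1$ members, which is essentially forced by a pigeonhole/length-comparison argument.

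Identify $\S^1$ with $\R/2\pi\Z$ equipped with the arc-length metric, so that a generator of $\Z_m$ acts by translation by $2\pi/m$. Under this action, the minimum displacement induced by any non-identity element is exactly $2\pi/m$: if $g\neq\1$ then $\dist{x,g(x)}\geq 2\pi/m$ for every $x\in\S^1$. Thus a subset $F\subset\S^1$ satisfies $F\cap g(F)=\emptyset$ for all $g\neq\1$ as soon as $\diam{F}<2\pi/m$.

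With that observation in hand, I would simply slice $\S^1$ into $m+1$ equal closed arcs. Concretely, set
$$F_k=\left[\frac{2\pi k}{m+1},\;\frac{2\pi(k+1)}{m+1}\right]\pmod{2\pi},\qquad k=0,1,\dots,m.$$
These $m+1$ closed arcs clearly cover $\S^1$, and each has diameter $2\pi/(m+1)$. Since $1/(m+1)<1/m$, we obtain $\diam{F_k}<2\pi/m$, and the preceding paragraph gives $F_k\cap g(F_k)=\emptyset$ for every non-identity $g\in\Z_m$. This produces a closed $\Z_m$-cover with $m+1$ members, establishing $\cov{\Z_m}{\S^1}\leq m+1$.

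There is no real obstacle here: the entire argument hinges on the strict inequality $2\pi/(m+1)<2\pi/m$, which provides just enough slack to accommodate one extra arc relative to a fundamental-domain decomposition (which would require $m$ arcs of length exactly $2\pi/m$, not allowed since the endpoints would coincide with their rotates). The construction also matches the lower bound from Theorem \ref{thm:lower_bound_cov}, so it is tight; this will be useful when the result is fed into Theorem \ref{thm:upper_bound_1dim} to handle arbitrary finite groups.
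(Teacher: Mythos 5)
Your proof is correct and is essentially identical to the paper's: both observe that every non-identity rotation displaces each point by at least $2\pi/m$, then cover $\S^1$ by $m+1$ equal closed arcs of length $2\pi/(m+1)$, whose diameter is strictly less than $2\pi/m$. Nothing of substance differs.
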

	
	\begin{proof}
		Note $\Z_m$ acts on $\S^1$ via rotations of $2\pi/m$. Enumerate $\Z_m=\{\1,\nu,\dots, \nu^{m-1}\}$. Note that for any $x\in \S^1$ and any $1\leq k\leq m-1$ we have%
		$$\dist[\S^1]{x,\nu^k(x)}\geq\dist[\S^1]{x,\nu(x)}=\frac{2\pi}{m}.$$ 
		For each $i=0,\dots, m$, let $F_i$ be the closed circular arc $[\frac{2i\pi}{m+1},\frac{2(i+1)\pi}{m+1}]$. Clearly $\diam{F_i}=\frac{2\pi}{m+1}<\frac{2\pi}{m}$, so it is impossible to have $x$ and $\nu^k(x)$ on the same $F_i$, that is $F_i\cap \nu^k F_i=\emptyset$ for all $i=0,\dots, m$ and all $k=1,\dots, m-1$. Therefore, the $F_i$'s are a $\Z_m$-cover of $\S^1$ using $(m+1)$ closed sets, and the result follows. 
	\end{proof}
	
	\begin{lemma}\label{lemma:cover_cyclic}
		Let $K=\Z_m*\Z_m$, so $K$ is trivially a $\Z_m$-space. As a simplicial complex, the vertices of $K$ are two disjoint copies of $\Z_m$. Denote these vertices by $1, \dots, m$ if they come from the first copy, and $\bar{1}, \dots, \bar{m}$, from the second. Then, there exists a closed $\Z_m$-cover $K=A_0\cup\dots\cup A_m$, with $(m+1)$ closed sets,  such that $i, \bar{i}\in A_i$ for each $i\in\Z_m=\{1,\dots, m\}$.  
	\end{lemma}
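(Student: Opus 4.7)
The plan is to construct the cover orbit by orbit under the $\Z_m$-action on $K=\Z_m*\Z_m$. The edges of $K$ split into $m$ orbits: the ``diagonal'' orbit $d=0$ of edges $\{i,\bar i\}$, and for each $d\in\{1,\ldots,m-1\}$ an orbit of $m$ edges $\{i,\overline{i+d}\}$. For the $d=0$ orbit, I will declare the entire edge $\{i,\bar i\}$ to lie in $A_i$; this gives $i,\bar i\in A_i$ on these pieces and is trivially $\Z_m$-disjoint because $\nu$ sends $\{i,\bar i\}$ to the distinct edge $\{i+1,\overline{i+1}\}$.

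For each orbit $d\geq 1$, I will produce a piecewise-constant coloring of the $m$ edges in the orbit, using the extra color $A_0$ as a transient ``blank'' value in a $15$-puzzle-style cascade. Parameterize each edge by $t\in[0,1]$ with $t=0$ at the first-copy vertex, pick transition times $0<t_1<\cdots<t_N<1$, and specify injective functions $\pi_k\colon\Z_m\to\{0,1,\ldots,m\}$ (for $k=0,\ldots,N$) with $\pi_0=\mathrm{id}$, with $\pi_N(i)=i+d$ (the shift-by-$d$ map), and with each consecutive pair $\pi_{k-1},\pi_k$ differing in exactly one input $i_k$, where $\pi_k(i_k)$ is the value not in the image of $\pi_{k-1}$. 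Color the piece $[t_{k-1},t_k]$ of the edge $\{i,\overline{i+d}\}$ by $A_{\pi_{k-1}(i)}$. Such a sequence of $\pi_k$'s exists because the transpositions $(0,s)$ for $s=1,\ldots,m$ generate $S_{m+1}$, so shift-by-$d$ (as an element of $S_{m+1}$ fixing $0$) is obtained by a finite product; for shift-by-$1$ an explicit cascade that sends edge $m$ to $A_0$, cascades edges $m-1,\ldots,1$ down by one, and then moves edge $m$ from $A_0$ to $A_1$ uses $N=m+1$ moves.

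By construction, $\pi_0=\mathrm{id}$ places each first-copy vertex $i$ in $A_i$, and $\pi_N$ places each second-copy vertex $\bar j$ in $A_j$, so the required vertex condition holds; the sets $A_s$ are closed and cover $K$ automatically. The heart of the verification is $\Z_m$-disjointness. Because $\nu$ preserves each $d$-orbit, it suffices to show that within each orbit and each $t\in[0,1]$, the colors carried by the $m$ edges $\{i,\overline{i+d}\}$ are pairwise distinct. On the interior of each piece this is exactly the injectivity of the corresponding $\pi_{k-1}$; at each transition time $t_k$ only the edge $i_k$ carries two competing colors $\pi_{k-1}(i_k)$ and $\pi_k(i_k)$, while every other edge retains a single unchanged color, and injectivity of $\pi_{k-1}$ combined with the fact that $\pi_k(i_k)$ was the value missing from $\pi_{k-1}$ prevents any color from appearing on more than one edge at $t_k$.

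The main obstacle is the combinatorial design of the piece decomposition. A naive two-piece split of $\{i,\overline{i+d}\}$ into $A_i$ on $[0,1/2]$ and $A_{i+d}$ on $[1/2,1]$ fails because the midpoint boundary lies in $A_i\cap A_{i+d}$, and under $\nu$ it matches the midpoint of $\{i+1,\overline{i+d+1}\}$ in $A_{i+1}\cap A_{i+d+1}$, putting the color $A_{i+d}$ on two midpoints of the same $\Z_m$-orbit. The one-at-a-time $15$-puzzle scheme, with $A_0$ absorbing the slack, isolates each boundary in its own $\Z_m$-orbit and removes this obstruction.
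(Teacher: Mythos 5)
Your proof is correct but follows a genuinely different route from the paper's. The paper's argument is a pullback: it defines an explicit $\Z_m$-map $\Phi\colon \Z_m*\Z_m\to\S^1$ sending both $i$ and $\bar i$ to $2\pi i/m$ (and each edge affinely onto the arc between the images), then takes $A_i=\Phi^{-1}(F_i)$ for the $(m+1)$-arc $\Z_m$-cover $F_0,\dots,F_m$ of $\S^1$ from Lemma~\ref{lemma:cover_circle}, invoking Theorem~\ref{thm:properties_cov} to see that the pullback is again a $\Z_m$-cover. Your construction is instead direct and self-contained: you partition the edges into $\Z_m$-orbits by offset $d$, color the diagonal orbit outright, and for $d\geq 1$ sweep across the orbit with a piecewise-constant coloring whose transitions correspond to star-transpositions in $S_{m+1}$, using the spare class $A_0$ as a buffer. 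Your checks of $\Z_m$-disjointness on piece interiors (injectivity of $\pi_{k-1}$) and at transition times (the incoming value being exactly the one missing from the image of $\pi_{k-1}$) are exactly the right things to verify, and the existence argument via generation of $S_{m+1}$ by the transpositions $(0,s)$, together with the explicit length-$(m+1)$ cascade for $d=1$, is sound. The two proofs buy different things: the paper's is shorter and reuses the already-established circle case; yours gives a fully explicit piecewise-linear cover and makes the ``one spare color absorbs the slack'' mechanism visible, which is in fact the same orbit-by-orbit cascade idea the paper then deploys at scale (with the present lemma as a black box) in its proof of Theorem~\ref{thm:upper_bound_1dim}. Your closing remark on why the naive two-piece split fails is a correct and useful justification for why the extra class $A_0$ is genuinely needed.
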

	
	\begin{proof}
		Let $\Phi:K\to\S^1$ be defined as follows. For the vertices of $K$, let $\Phi(i)=\Phi(\bar{i})=(2\pi i)/m\in\S^1$. For the edges $\{i,\bar{j}\}\in K$, $\Phi(\{i,\bar{j}\})=[\frac{2\pi i}{m}, \frac{2\pi j}{k}]$. This defines $\Phi$ on the simplicial complex $K$, and is straightforward to check it is both continuous and a $\Z_m$-map. 
		
		Lemma \ref{lemma:cover_circle} ensures there is a $\Z_m$-cover $F_0, \dots, F_m$ of $\S^1$. Clearly the points $\frac{2\pi i}{m}$ must be in different sets $F_i$ for it to be a $\Z_m$-cover, so we may assume $\frac{2\pi i}{m}\in F_i$ for $i=1,\dots, m$. Finally, the pullback cover $A_i=\Phi^{-1}(F_i)$ for $i=0,\dots, m$ produces the desired $\Z_m$-cover.  
	\end{proof}
	
	Regarding the closed sets of a $\Z_m$-cover as \textit{colors}, we can give an intuitive interpretation of Lemma \ref{lemma:cover_cyclic}. If the vertices of $K$ already have colors assigned, such that $c(i)=c(\bar{i})$ for all $i$, and $c(i)\neq c(j)$ whenever $i\neq j$, then we can extend such coloring to $K$, adding just one more color in order to get a $\Z_m$-cover. This interpretation is the key to prove Theorem \ref{thm:upper_bound_1dim}, where finding a $G$-cover for $G*G$ can be broken into finding $G$-covers for disjoint cyclic orbits of edges, using only the colors already assigned to the vertices and at most one more color.
	
	\begin{proof}[Proof of Theorem \ref{thm:upper_bound_1dim}]
		Let $m=|G|$ and enumerate $G=\{g_1=\1,g_2,\dots, g_{m}\}$. Let $K=G*G$, and denote its vertices by $g_1, \dots, g_m$ and $\bar{g_1},\dots, \bar{g_m}$, as in Lemma \ref{lemma:cover_cyclic}. All points $x\in \|K\|$ can be written as $tg_i\oplus (1-t)\bar{g_j}$, for some $g_i, g_j\in G$ and some $0\leq t\leq 1$. The edges of $K$ are all edges of the form $\{g_i,\bar{g_j}\}$, for $1\leq i,j\leq m$. To ease our notation, we will denote an edge $\{g_i, \bar{g_j}\}$ simply as $g_i\sim g_j$, where it is understood that the element on the left denotes that vertex and the element on the right denotes the \textit{overlined} version of the vertex. In particular, $g\sim g$ is valid, and denotes the edge $\{g,\bar{g}\}\in K$ for any $g\in G$. 
		
		We will construct a closed $G$-cover of $K$ by gradually adding closed subsets of $K$ to the initial color classes $C_0,\dots C_m$, initialized as $C_0=\emptyset$ and $C_i=\{g_i\}\cup\{\bar{g_i}\}$ for $1\leq i\leq m$. Note they are indeed closed, since they are either empty or exactly two points. 
		
		For an easier exposition, we will say that a closed subset $F\subset \|K\|$ is \textit{colored with color $g_i$, }to mean that we add the set $F$ to $C_i$, defining $C_i'=C_i\cup F_i$. This is a slight abuse of notation, since it is possible for some points to be part of many sets $C_i$. Note that by doing this finitely many times, the color classes will remain closed.
		
		We want that $C_i\cap gC_i=\emptyset$ for all $g\in G, g\neq\1$, $0\leq i\leq m$. This failing is equivalent to the existence of a point $x\in\|K\|$, an index $0\leq i\leq m$ and an element $g\in G$, $g\neq \1$ such that $x$ and $gx$ both \textit{have color} $g_i$. Since in our initial coloring the vertices of $K$ have different colors, $x$ must be in the interior of some edge $a\sim b$, and thus $gx$ is in the edge $ga\sim gb$. Thus, to prevent the existence of such $x$, we just need to focus on properly extending the color classes $C_0, \dots, C_m$, on the $G$-orbit $\calO(a\sim b):=\{ga\sim gb: g\in G\}$, for each edge $a\sim b\in K$ separately. When we say \textit{properly extending}, we mean that $C_0\cap\calO(a\sim b),\dots, C_m\cap\calO(a\sim b)$ is a closed $G$-cover of $\calO(a\sim b)$. We will now focus on doing this on one such orbit. 
		
		Let $a\sim b$ be an edge of $K$, and $\calO=\{ga\sim gb: g\in G\}$ be its $G$-orbit. By interpreting each edge as an assignment $\sigma(ga)=gb$, we can see $\sigma$ as a permutation of the elements of $G$. Recall that all finite permutations can be regarded as a disjoint union of cycles (see e.g. \cite[p.~30]{Lang2002}), so $\sigma$ partitions $G$ into $r$ cycles of the form
		\begin{align*}
			a_1^{(1)}\mapsto \sigma(a_1^{(1)})=a_2^{(1)}\mapsto \sigma(a_2^{(1)}) &=a_3^{(1)}\mapsto\cdots\mapsto\sigma(a_{k_1}^{(1)})=a_1^{(1)}\\
			a_1^{(2)}\mapsto \sigma(a_1^{(2)})=a_2^{(2)}\mapsto \sigma(a_2^{(2)}) &=a_3^{(2)}\mapsto\cdots\mapsto\sigma(a_{k_2}^{(2)})=a_1^{(2)}\\
			&\vdots\\
			a_1^{(r)}\mapsto \sigma(a_1^{(r)})=a_2^{(r)}\mapsto \sigma(a_2^{(r)}) &=a_3^{(r)}\mapsto\cdots\mapsto\sigma(a_{k_r}^{(r)})=a_1^{(r)}
		\end{align*}
		where $k_j$ is the length of the $j$-th cycle. 
		
		And at the same time, this partitions $\calO$ into $r$ disjoint subsets of the form $$\calO_j=\left\{ a_1^{(j)}\sim a_2^{(j)}, a_2^{(j)}\sim a_3^{(j)}, \dots, a_{k_j}^{(j)}\sim a_1^{(j)}     \right\}.$$
		
		Each $\calO_j$ can be seen itself as a 1-dimensional simplicial complex, and moreover it is naturally a $\Z_{k_j}$-space, where $\Z_{k_j}$ acts by cyclically permuting the edges. Thus, we can regard $\calO_j\subset \Z_{k_j}*\Z_{k_j}$. Lemma \ref{lemma:cover_cyclic} gives a coloring for the whole space $\Z_{k_j}*\Z_{k_j}$ just using the colors already assigned to the vertices $a_i^{(j)}$ and at most one more color (which we will assign to the color class $C_0$), so we may restrict this coloring to $\calO_j$. In the especial case when $\sigma$ is already a cycle, this finishes the proof. For the general case when $r\geq 2$, still Lemma \ref{lemma:cover_cyclic} will be our building block, but we need to be careful so that the different cycles don't produce points $x$ and $gx$ with the same color. 
		
		Take each edge $a_i^{(j)}\sim a_{i+1}^{(j)}$ (the addition $i+1$ must be understood modulo $k_j$) and divide it into $r$ closed intervals $P_{i,1}^{(j)}, P_{i,2}^{(j)},\dots, P_{i,r}^{(j)}$ intersecting only at their endpoints. More formally, we identify $a_i^{(j)}\sim a_{i+1}^{(j)}$ isometrically with the real segment $[0,1]$ by sending $a_i^{(j)}$ to 0 and $a_{i+1}^{(j)}$ to 1. Then $P_{k,i}^{(j)}$ is the preimage of the subinterval $\left[(k-1)/n,k/n\right]$. Notice then that for any $g\in G$, and any $1\leq k\leq r$ the set $gP_{i,k}^{(j)}$ is always another interval $P_{i',k}^{(j')}$, i.e. it is part of a different edge that may be part of a different orbit $\calO_{j'}$, but will always be on the same relative position over the edge. 
		
		We finally produce the coloring for $\calO=\{a_{i}^{(j)}\sim a_{i+1}^{(j)}: 1\leq i\leq k_j, 1\leq j\leq r\}$ by coloring each subinterval $P_{i,k}^{(j)}$, as follows.
		\begin{enumerate}
			\item Color each subinterval $P_{i,k}^{(j)}$ where $k<j$ with the color $a_{i}^{(j)}$. I.e. color the first $(j-1)$ subintervals of each edge, with the color of the left vertex. 
			\item Color each subinterval $P_{i,k}^{(j)}$ where $k>j$ with the color $a_{i+1}^{(j)}$. I.e. color the last $(r-j)$ subintervals of each edge, with the color of the right vertex. 
			\item Fix each $1\leq j\leq r$. Notice that the endpoints of the subintervals $P_{i,j}^{(j)}$, were already assigned colors $a_i^{(j)}$ and $a_{i+1}^{(j)}$ respectively. By cycling on the edges $a_i^{(j)}\sim a_{i+1}^{(j)}$, we also cycle on the subintervals $P_{i,j}^{(j)}$, so themselves can be seen as a subset of the $\Z_{k_j}$-complex $\Z_{k_j}*\Z_{k_j}$. Thus lemma \ref{lemma:cover_cyclic} gives a closed \textit{coloring} for $P_{1,j}^{(j)}, \dots, P_{k_j,j}^{(j)}$ using only the colors already assigned to the $a_i^{(j)}$'s and color 0 (color class $C_0$). 
		\end{enumerate}
		Checking that this coloring produces a closed $G$-cover of $\calO$, is straightforward. By repeating this process for all the $G$-orbits of edges in $K$, we get the desired $G$-cover with $m+1$ closed sets.
	\end{proof}
	
	A possible proof of Conjecture \ref{conj:cov_d}, would be to adapt the technique of the proof of Theorem \ref{thm:upper_bound_1dim} to higher dimensions. Doing this, would require to first prove Conjecture \ref{conj:smaller_conjectures} item 1, regarding $\Z_m$ spaces. While we haven't been able to do this generally, even for dimension 2, we have been able to explicitly find some $\Z_m$-covers for small values of $m$ aided by a computer. The following Theorem allows us to find $G$-covers computing the chromatic number of relatively small graphs. More details on the specific graphs and techniques used to produce the examples of Figure \ref{fig:upper_bounds} are included in the Appendix. 
	
	\begin{theorem}\label{thm:general_upper_bound}
		Let $K$ be a compact geometric $G$-simplicial complex. Let $T$ be a triangulation of $K$, such that it is still a $G$-simplicial complex. Define the graph $H$ to be the graph with vertices $V(T)$ and with edges $v\sim w$ whenever there exists a $g\in G, g\neq\1$ such that $\{v,gw\}\in T$. Then
		$$\cov{G}{K}\leq\chi(H).$$
	\end{theorem}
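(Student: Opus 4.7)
The plan is to convert any proper $\chi(H)$-coloring of $H$ into a closed $G$-cover of $K$ with $\chi(H)$ parts; Theorem \ref{thm:properties_cov} will then upgrade this to $\cov{G}{K}\leq\chi(H)$. First I would fix a proper coloring $c\colon V(T)\to\{1,\dots,\chi(H)\}$ and pass to the first barycentric subdivision $sd(T)$, which inherits a free $G$-simplicial structure from $T$. For each vertex $v\in V(T)$ I associate its closed dual cell $D(v)$: the subcomplex of $sd(T)$ consisting of all barycentric simplices whose underlying flag $\tau_0\subsetneq\tau_1\subsetneq\cdots\subsetneq\tau_k$ in $T$ satisfies $v\in\tau_0$. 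By construction $g\cdot D(v)=D(gv)$ for every $g\in G$.

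Next I would set $F_i:=\bigcup_{c(v)=i} D(v)$ for $i=1,\dots,\chi(H)$. Each $F_i$ is closed, and $\{F_i\}$ covers $K$ because every point lies in a barycentric simplex whose flag has some minimal element $\tau_0\in T$, and such a point is in $D(v)$ for any vertex $v$ of $\tau_0$. The decisive step is to verify $F_i\cap gF_i=\emptyset$ for every $g\neq\1$, which reduces to the combinatorial identity
\[
D(v)\cap D(u)\neq\emptyset\ \Longleftrightarrow\ \{v,u\}\in T,
\]
interpreted with the right-hand side meaning ``$\{v,u\}$ is a face of $T$''---a $0$-simplex when $v=u$, a $1$-simplex otherwise. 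Granting the identity, any point of $F_i\cap gF_i$ would yield $v,w\in c^{-1}(i)$ and $g\neq\1$ with $\{v,gw\}\in T$; this is exactly the condition making $v\sim w$ an edge of $H$, contradicting $c(v)=c(w)$.

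The hardest step will be justifying this identity. The ``$\Leftarrow$'' direction is immediate---when $\{v,u\}\in T$, the barycenter $\widehat{\{v,u\}}$ sits in both $D(v)$ and $D(u)$. For ``$\Rightarrow$'', a common point lies on a simplex of $sd(T)$ belonging to both subcomplexes, whose underlying flag has minimal element $\tau_0\in T$ satisfying $v\in\tau_0$ and $u\in\tau_0$; hence $\{v,u\}\subseteq\tau_0$, making $\{v,u\}$ a face of $T$. Once this is in hand, $\{F_1,\dots,F_{\chi(H)}\}$ is a closed $G$-cover of $K$ of size $\chi(H)$, giving $\overline{\cov{G}{K}}\leq\chi(H)$, and Theorem \ref{thm:properties_cov} concludes $\cov{G}{K}\leq\chi(H)$.
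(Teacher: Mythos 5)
Your proposal is correct and takes essentially the same approach as the paper: pass to the barycentric subdivision, assign to each vertex $v\in V(T)$ the union of barycentric simplices whose flag's minimal element contains $v$ (your $D(v)$ is the same point set as the paper's $F(v)$, the union of facets of $\text{Bar}(T)$ containing $v$), union by color class, and use the combinatorial fact that $\|D(v)\|\cap\|D(u)\|\neq\emptyset$ forces $\{v,u\}$ to be a face of $T$. Your formulation of the intersection lemma via the subcomplex property of dual cells is somewhat cleaner than the paper's explicit chain-chasing through $\tau_1\cap\tau_2$, but the underlying argument is identical.
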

	\begin{proof}
		If $H$ has loops, its chromatic number is infinite and the result is trivial; so suppose $H$ has no loops. Let $c$ be a coloring of $H$ using $\chi(H)$ colors, so $c:V(T)\to\{1,\dots,\chi(H)\}$. 
		
		Let $B=\text{Bar}(T)$ be the Barycentric subdivision of $T$. Recall $\|B\|=\|T\|=\|K\|$ and that the simplices of $B$ can be identified with chains of simplices of $T$. Since $T$ is a $G$-simplicial complex, applying $g$ to a chain $\sigma_1\prec\sigma_2\prec\dots\sigma_r$ of simplices of $T$, produces $g\sigma_1\prec g\sigma_2\prec\dots\prec g\sigma_r$, another chain of simplices of $T$. So $B$ is a $G$-simplicial complex as well. Recall that the facets of $B$ correspond to maximal chains of simplices of $T$, so for each facet $\tau\in B$ there is a unique vertex $v\in V(T)$ such that $v\in \tau$.   
		
		For each vertex $v\in V(T)$, we define $F(v)$ as follows 
		$$F(v)=\bigcup\{\|\tau\|: \tau\in\text{Facets}(B)\text{ and } v\in\tau\}$$
		
		Note that the geometric representations of the facets of $B$ give a closed cover of $\|K\|$, and so the sets $F(v)$ are also a closed cover. Also note that $gF(v)=F(gv)$ for all $g\in G, v\in V(T)$. 
		
		Define the closed sets $C_1, \dots, C_{\chi(H)}$ as
		$$C_i=\bigcup_{\substack{v\in V(T) \\ c(v)=i}} F(v).$$
		Then the sets $C_i$ are a closed cover for $\|K\|$, and we claim they are a $G$-cover. To see this, suppose by way of contradiction that there is a point $x$ in $\|K\|$, an index $i$, and an element $g\in G$, $g\neq\1$ such that $x,gx\in C_i$. This means there are vertices $u,v\in V(T)$ with $c(u)=c(v)=i$, such that $x\in F(u)$ and $gx\in F(v)$. Thus $gx\in gF(u)=F(gu)$ and $gx\in F(v)$. 
		
		Then, there are simplices $\tau_1, \tau_2\in \text{Facets}(B)$, such that $gu\in\tau_1, v\in \tau_2$ and $gx\in \|\tau_1\|\cap\|\tau_2\|$. Thus, there is a nontrivial face $\tau=\tau_1\cap\tau_2$ in $B$. We claim this means $\{v,gu\}\in T$, and so $v\sim u$ in $H$, but they both had the same color $i$, giving the desired contradiction. 
		
		To finish the proof we establish now our last claim. Since $gu\in\tau_1$, $v\in \tau_2$, and $\tau=\tau_1\cap\tau_2\neq\emptyset$, we can describe them as chains of simplices in $T$:
		\begin{align*}
			\tau_1&=\{gu\prec a_2\prec\dots\prec a_r\}\\
			\tau_2&=\{v\prec b_2\prec\dots\prec b_s\}\text{ and}\\
			\tau&=\{c_1\prec\dots\prec c_t\}
		\end{align*}
		Where each $c_i=a_{i_j}=b_{i_k}$ for some indices. Thus $c_1=a_j=b_k$ for some indices, so $gu\prec c_1$ and $v\prec c_1$, so $\{gu,v\}\subset c_1$. But $c_1$ is a simplex of $T$, so $\{gu,v\}\in T$, as we claimed. 
	\end{proof}
	
	\section[Random G-Borsuk Graphs]{Random $\bm{G}$-Borsuk Graphs}\label{section:random_g_graphs}
	In this section, we focus on the chromatic number of random $G$-Borsuk graphs, and find thresholds for $\epsilon$ depending on $n$, the number of vertices, such that asymptotically almost surely we can more precisely bound its chromatic number. 
	\begin{defi}[Random $G$-Borsuk graphs]
		Let $G$ be a finite group, and $K$ a compact $G$-space. We define the \textbf{random $G$-Graph on $K$}, denoted by $\Ggraph{G}{n,\epsilon}$, as the $G$-Borsuk graph $\Ggraph{G}{X(n),\epsilon}$, where $X(n)=\{x_1,\dots, x_n\}$ is a set of $n$ i.i.d. uniform random points on $K$.
	\end{defi}
	
	Note that the definition of a random $G$-Borsuk graph also depends on the specific underlying $G$-space, however for simplicity we don't include $K$ in the notation and instead make it clear from the context. 
	
	Since a random $G$-Borsuk graph is an induced subgraph of a $G$-Borsuk graph, Theorem \ref{thm:properties_cov} states its chromatic number must be at most $\cov{G}{K}\leq\cov{G}{k}$ (here $k=\ind{G}{K}$), with equality if the random sampling is \textit{dense enough}. In this section we will prove Theorem \ref{thm:chromatic_random}, which establishes, up to a constant,  what $\epsilon(n)$ provides a dense enough sample. On the other hand, when $\epsilon(n)$ is small enough to provide a \textit{sufficiently sparse} sample, then the chromatic number is a.a.s.\ bounded above by $\cov{G}{k-1}$, which if Conjecture \ref{conj:cov_d} is true must be strictly less than $\cov{G}{K}$.

	\begin{theorem}\label{thm:chromatic_random}
		Consider a finite group $G$ and a finite geometric $G$-simplicial complex $K$ of dimension $d$. Suppose $K$ is pure $d$-dimensional, i.e. all of its maximal faces have dimension $d$. And let $k=\ind{G}{K}$. Then, there exist constants $C_1$ and $C_2$ (independent from $n$), such that:
		\begin{enumerate}
			\item If $\epsilon\geq C_1\left(\frac{\log n}{n}\right)^{1/d}$, then a.a.s. 
			$\chi(\Ggraph{G}{n,\epsilon})=\cov{G}{K}.$
			
			\item If $\epsilon\leq C_2\left(\frac{\log n}{n}\right)^{1/k}$, then a.a.s.
			$\chi(\Ggraph{G}{n,\epsilon})\leq\cov{G}{k-1}.$
		\end{enumerate}
	\end{theorem}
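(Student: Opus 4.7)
Both parts reduce, through Theorem \ref{thm:rel_graph_cov}, to proving a high-probability geometric property of the random sample $X(n)$.

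For Part 1, the upper bound $\chi(\Ggraph{G}{n,\epsilon})\leq\cov{G}{K}$ is automatic from Theorem \ref{thm:rel_graph_cov}(2), since $\Ggraph{G}{n,\epsilon}$ is an induced subgraph of $\Ggraph{G}{K,\epsilon}$ and the latter has chromatic number at most $\cov{G}{K}$ whenever $\epsilon$ is below the fixed constant $\epsilon_0$ from that theorem. The matching lower bound follows from Theorem \ref{thm:rel_graph_cov}(1) once I verify that a.a.s.\ $X(n)$ is an $(\epsilon/D)$-net of $K$. For this I would run a standard coverage argument: cover $K$ by $N=O(\epsilon^{-d})$ balls of radius $\epsilon/(2D)$, each of $d$-volume at least $c\epsilon^d$ (using the compactness and pure $d$-dimensionality of $K$); the probability a given ball is empty is at most $(1-c\epsilon^d)^n\leq e^{-cn\epsilon^d}$, and the union bound $Ne^{-cn\epsilon^d}$ vanishes provided $\epsilon^d\geq C_1^d(\log n)/n$ with $C_1$ large enough that $cC_1^d$ exceeds $d$.

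For Part 2, the plan is topological. Since $\ind{G}{K}=k$, fix a $G$-Lipschitz map $\Phi\colon K\xrightarrow{G}E_kG$, realize $E_kG=G*E_{k-1}G$, and take a closed $G$-cover $\{F_1,\dots,F_m\}$ of $E_{k-1}G$ with $m=\cov{G}{k-1}$. I would extend $\{F_i\}$ to a $G$-cover of $\{t\leq 1-\delta\}\subset E_kG$ by cones $\tilde F_i=\{tg\oplus(1-t)\mu:g\in G,\ 0\leq t\leq 1-\delta,\ \mu\in F_i\}$ (as in the proof of Theorem \ref{thm:recursive_upper_bound}), leaving only the $|G|$ apex cone-tips $\{t>1-\delta\}$ uncovered. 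Coloring each random point $x_i$ by the index $a$ with $\Phi(x_i)\in\tilde F_a$ then defines a graph homomorphism $\Ggraph{G}{n,\epsilon}\to K_m$, provided that a.a.s.\ (i) $\Phi(X(n))\cap\{t>1-\delta\}=\emptyset$, and (ii) $\lambda\epsilon$ is strictly below the minimum $G$-separation of the $\tilde F_i$'s, where $\lambda$ is the Lipschitz constant of $\Phi$.

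For (i), the region $\{t>1-\delta\}$ has $k$-dimensional volume $O(\delta^k)$ inside $E_kG$ (small corner simplices of side $\delta$ at each apex), so its preimage under $\Phi$ has $d$-dimensional volume $O(\delta^k)$ by Lipschitz control, and the expected number of bad random points is $O(n\delta^k)$. The main obstacle I expect is the simultaneous control of (i) and (ii): the $G$-separation of the cone sets $\tilde F_i$ near the apex is itself of order $\delta$, so (ii) demands $\epsilon<C\delta$, while (i) forces $\delta<n^{-1/k}$, jointly requiring $\epsilon\ll n^{-1/k}$ and only barely accommodating $\epsilon\leq C_2(\log n/n)^{1/k}$ after a careful refinement. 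I would handle this by using a non-uniform thickening of the $F_i$'s that stays a constant distance $\eta$ apart near the $E_{k-1}G$ side while tapering near the apex, together with a Chernoff/Poisson tail bound to upgrade the expected-bad-point estimate in (i) from $o(1)$ to a.a.s.\ emptiness. Once these quantitative estimates are in place, the coloring above yields $\chi(\Ggraph{G}{n,\epsilon})\leq m=\cov{G}{k-1}$ as required.
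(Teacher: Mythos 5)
Your Part 1 matches the paper's argument: the upper bound is immediate from Theorem \ref{thm:rel_graph_cov}(2), and the lower bound follows from showing $X(n)$ is a.a.s.\ an $(\epsilon/D)$-net via the standard ball-covering plus union bound. That part is fine.

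Your Part 2 has a genuine gap, which you have actually detected but not resolved. The problem: your construction uses a \emph{single, fixed} excluded region near the apexes, namely $\{t>1-\delta\}$. Near level $t=1-\delta$, the $G$-separation of the cones $\tilde F_i$ is of order $\delta\cdot\eta$ where $\eta$ is the $G$-separation of the $F_i$'s in $E_{k-1}G$, so your condition (ii) forces $\delta\gtrsim\epsilon$. The apex region then has $k$-volume $\Theta(\delta^k)\gtrsim\Theta(\epsilon^k)=\Theta(\log n/n)$, and its preimage under $\Phi$ has $d$-volume of the same order (by the $\Phi$-volume control). The probability that $n$ i.i.d.\ uniform points all miss a fixed region of relative volume $\Theta(\log n/n)$ is $(1-\Theta(\log n/n))^n\approx n^{-\Theta(1)}\to 0$, not $1$. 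So condition (i) \emph{fails} a.a.s.\ for a single fixed region, and the two conditions are genuinely incompatible at the stated threshold. Your proposed fix — tapering thicknesses plus a Chernoff bound — does not help: concentration bounds show the number of points in the bad region concentrates around its (growing) expectation $\Theta(\log n)$, which upgrades the estimate in the wrong direction. Tapering the cover also does not change the volume-vs-separation tradeoff at the apex.

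The missing idea, which is the crux of the paper's proof of the sparse case, is to try \emph{many} disjoint candidate bad regions and show that a.a.s.\ at least one of them is empty. The paper constructs, for each $n$, a collection of $N\sim n/\log n$ mutually $\delta$-apart $G$-invariant candidate sets $Y_1,\dots,Y_N$ of projection centers (one interior point per $d$-face of $E_kG$), so that the preimages $F_r=\Phi^{-1}(B(Y_r,\kappa\lambda\epsilon))$ are disjoint, each with $d$-volume at most $\tfrac{\log n}{3n}\vol[d]{K}$. Using Poissonization to gain spatial independence (Lemma \ref{lemma:poisson_empty}), the probability that \emph{every} $F_r$ contains a random point is at most $(1-n^{-2/3})^N\leq\exp(-Dn^{1/3}/\log n)\to 0$, so a.a.s.\ some $F_r$ is empty. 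Then the radial projection from $Y_r$ onto the $(k-1)$-skeleton (Theorem \ref{thm:homom_boundary}) gives the desired graph homomorphism. Without this many-candidates-plus-Poissonization step, no choice of $\delta$ reconciles your conditions (i) and (ii) at the threshold $(\log n/n)^{1/k}$, so your argument as written cannot close.
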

	
	Theorem \ref{thm:chromatic_random_intro} follows by taking $K$ a $E_dG$ space, since then $k=\ind{G}{K}=\dim(K)=d$. 
	
	The case $G=\Z_2$ corresponds to Random Borsuk Graphs, which was already established in Theorems 1.1 and 1.2 of \cite{Kahle-Martinez2020}. The main ideas behind the proofs of this generalization remain the same. We first establish some intermediate results before jumping into the proof. 
	
	\subsection{Some Geometric Lemmas}
	
	For the \textit{``sparse case''}, our approach relies on being able to project the vertices of the $G$-Borsuk graph onto the $(d-1)$-skeleton of the classifying space via a $G$-map. This will be possible as long as vertices that where close to each other remain close after the projection. This will follow from the following lemmas studying such projections on planes and $d$-simplices. 
	
	\begin{lemma}\label{lemma:rays_plane} Let $O$ be a point on the plane and let $\ell$ be a line such that the distance from $O$ to $\ell$ is at least $d_1>0$. Let $A$ and $B$ be two points such that the following hold
		\begin{itemize}
			\item $A,B$ and $O$ lie all on the same semi-plane with respect to $\ell$;
			\item the rays $\vec{OA}$ and $\vec{OB}$ intersect $\ell$;
			\item $\dist{O,A'}, \dist{O,B'}\leq d_2$, where  $A'=OA\cap\ell$ and $B'=OB\cap\ell$;
			\item $\dist{O,A}, \dist{O,B}\geq \kappa\epsilon$; and
			\item $\dist{A,B}\leq\epsilon$. 
		\end{itemize}
		Then $$\dist{A',B'}\leq\frac{d_2^2}{\kappa d_1}.$$
	\end{lemma}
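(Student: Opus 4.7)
The plan is to exploit the fact that $A,A'$ lie on one ray out of $O$ and $B,B'$ on another, so the angle $\theta$ at $O$ is shared by triangles $OAB$ and $OA'B'$. I will first bound $\sin\theta$ from the small side $\dist{A,B}$ via the law of sines, and then translate this into a bound on $\dist{A',B'}$ by computing the area of $OA'B'$ in two ways, using $\ell$ as a base (this is where the lower bound $d_1$ on the distance from $O$ to $\ell$ enters).

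For the first step, apply the law of sines in triangle $OAB$: letting $\theta$ denote the angle at $O$ and $\alpha$ the angle at $A$,
$$\sin\theta \;=\; \frac{\dist{A,B}\,\sin\alpha}{\dist{O,A}} \;\leq\; \frac{\dist{A,B}}{\dist{O,A}} \;\leq\; \frac{\epsilon}{\kappa\epsilon} \;=\; \frac{1}{\kappa}.$$
Because $A'$ lies on the ray $\vec{OA}$ and $B'$ on the ray $\vec{OB}$, the angle at $O$ in the triangle $OA'B'$ equals this same $\theta$.

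For the second step, let $h$ denote the distance from $O$ to the line $\ell$, so $h\geq d_1$. Since both $A'$ and $B'$ lie on $\ell$, the line through $A'B'$ is $\ell$, and the height of triangle $OA'B'$ relative to the base $A'B'$ is exactly $h$. Equating the two standard expressions for the area of $OA'B'$ gives
$$\tfrac{1}{2}\,\dist{A',B'}\cdot h \;=\; \tfrac{1}{2}\,\dist{O,A'}\,\dist{O,B'}\sin\theta,$$
whence
$$\dist{A',B'} \;=\; \frac{\dist{O,A'}\,\dist{O,B'}\sin\theta}{h} \;\leq\; \frac{d_2^2\sin\theta}{d_1} \;\leq\; \frac{d_2^2}{\kappa d_1}.$$
The degenerate situation $A'=B'$ (when the rays coincide or $\theta=0$) is trivial, and the hypotheses that $O,A,B$ lie on the same semi-plane and that both rays hit $\ell$ ensure $A',B'$ are well-defined and $A$ (resp.\ $B$) lies between $O$ and $A'$ (resp.\ $B'$), so no sign issues arise. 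I do not anticipate a genuine obstacle here; the only thing to be slightly careful about is which side of $O$ the points $A',B'$ lie on, but the semi-plane hypothesis rules out any pathology.
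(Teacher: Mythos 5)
Your proof is correct and takes essentially the same route as the paper: the first step (Law of Sines in $\triangle OAB$ to get $\sin\theta\le 1/\kappa$) is identical, and your area-in-two-ways argument produces exactly the same identity $\dist{A',B'}=\dist{O,A'}\dist{O,B'}\sin\theta/h$ that the paper gets by combining $\sin(\measuredangle OA'B')=h/\dist{O,A'}$ with a second Law of Sines in $\triangle OA'B'$. (One tiny slip: in the Law-of-Sines step the angle paired with the side $OA$ should be the one at $B$, not at $A$; this has no effect since you only use $\sin\le 1$.)
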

	
	\begin{proof}
		Note $$\sin (\measuredangle OA'B')=\frac{\dist{O,\ell}}{\dist{O,A'}}\geq\frac{d_1}{d_2}.$$
		By Law of Sines on the triangle $\Delta AOB$, 
		$$\sin(\measuredangle AOB)=\frac{\dist{A,B}\sin(\measuredangle OBA)}{\dist{O,A}}\leq\frac{\dist{A,B}}{\dist{O,A}}\leq\frac{\epsilon}{\kappa\epsilon}=\frac{1}{\kappa}.$$
		Note $\measuredangle A'OB'=\measuredangle AOB$, so by Law of Sines on the triangle $\Delta A'OB'$ we get
		$$\dist{A',B'}=\frac{\dist{O,B'}\sin(\measuredangle A'OB')}{\sin(\measuredangle OA'B')}\leq\frac{d_2 \frac{1}{\kappa}}{\frac{d_1}{d_2}}=\frac{d_2^2}{\kappa d_1}$$
	\end{proof}
	
	\begin{lemma}\label{lemma:phi_is_lips}
		Let $\Delta$ be a geometric $d$-simplex of diameter at most $d_2$. Let $O$ be an interior point such that $\dist{\partial\Delta,O}\geq d_1$. Take any two points $A, B\in\Delta$ with $\dist{A,B}\leq\epsilon$ and $\dist{O,A}, \dist{O,B}\geq \kappa\epsilon$. Let $A'$ and $B'$ be the intersection of the rays $OA$ and $OB$ with the boundary of $\Delta$ respectively. Then $\dist[\partial\Delta]{A',B'}\leq 2(d+1)d_2^2/(\kappa d_1)$. 
	\end{lemma}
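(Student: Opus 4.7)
The plan is to slice the simplex $\Delta$ by the affine plane through $O$, $A$, and $B$, and then apply the planar Lemma \ref{lemma:rays_plane} facet-by-facet to the radial projection. If $O, A, B$ are collinear, then $A' = B'$ and the conclusion is trivial, so assume they span a plane $\Pi$. The cross-section $\Gamma := \Pi \cap \Delta$ is a convex polygon, and each of its boundary edges is the nonempty intersection of $\Pi$ with a facet of $\Delta$; since $\Delta$ has $d+1$ facets, $\partial \Gamma$ has at most $d+1$ edges. The radial projections $A', B'$ of $A, B$ from $O$ both lie on $\partial \Gamma$.

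Parameterize segment $AB$ by $X(t) = (1-t)A + tB$ for $t \in [0,1]$, and let $Y(t)$ denote the unique intersection of the ray $\vec{OX(t)}$ with $\partial \Gamma$. As $t$ grows from $0$ to $1$, the angular position of the ray changes monotonically, so $Y(t)$ sweeps along one of the two arcs of $\partial \Gamma$ from $A'$ to $B'$, crossing from one facet of $\Delta$ to the next only when $Y(t)$ passes through a vertex of $\Gamma$. Choose parameters $0 = t_0 < t_1 < \dots < t_r = 1$ with $r \leq d+1$ so that each image $Y([t_{i-1}, t_i])$ lies in a single facet $F_i$ of $\Delta$, and set $X_i := X(t_i)$, $Y_i := Y(t_i)$. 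Because consecutive $Y_{i-1}, Y_i$ both lie in the convex facet $F_i$, the straight segment from $Y_{i-1}$ to $Y_i$ lies in $\partial \Delta$, and so
$$\dist[\partial \Delta]{A', B'} \;\leq\; \sum_{i=1}^{r} \dist{Y_{i-1}, Y_i}.$$

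To bound each summand, apply Lemma \ref{lemma:rays_plane} inside $\Pi$, with $O, X_{i-1}, X_i$ playing the roles of $O, A, B$ and the line $\ell := \Pi \cap F_i$ playing the role of $\ell$. The hypotheses $\dist{O, \ell} \geq d_1$ and $\dist{O, Y_{j}} \leq d_2$ follow from $\dist{O, \partial \Delta} \geq d_1$ and $\diam{\Delta} \leq d_2$. The delicate check is the lower bound on $\dist{O, X_i}$: since $X_i$ lies on segment $AB$ with $\dist{A, B} \leq \epsilon$ and $\dist{O, A}, \dist{O, B} \geq \kappa \epsilon$, letting $P$ be the foot of perpendicular from $O$ to line $AB$ gives $\dist{O, X_i} \geq \dist{O, P} \geq \sqrt{\kappa^2 - 1}\,\epsilon \geq (\kappa/2)\epsilon$ (the last step is valid once $\kappa \geq 2/\sqrt{3}$, which may be assumed without loss of generality since otherwise the claimed bound is automatic up to a constant). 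Lemma \ref{lemma:rays_plane} applied with $\kappa$ replaced by $\kappa/2$ then yields $\dist{Y_{i-1}, Y_i} \leq 2 d_2^2/(\kappa d_1)$, and summing over $r \leq d+1$ pieces gives the stated bound.

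The main obstacle is the geometric bookkeeping: confirming that the cross-sectional polygon $\partial \Gamma$ has at most $d+1$ edges, that $Y(t)$ moves monotonically across facets so the partition $\{t_i\}$ is well-defined with $r \leq d+1$, and that every intermediate $X_i$ stays sufficiently far from $O$ to legally invoke Lemma \ref{lemma:rays_plane}.
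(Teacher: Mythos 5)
Your proposal is correct and follows essentially the same approach as the paper: project $\overline{AB}$ radially from $O$ onto $\partial\Delta$, observe the image is a polygonal path through at most $d+1$ facets, and apply Lemma \ref{lemma:rays_plane} piecewise. The only differences are cosmetic (the paper formally splits into two cases $\tau_1=\tau_2$ and $\tau_1\neq\tau_2$, and writes each piece's plane as $\Pi_i$ through $E_{i-1},E_i,O$ even though all these coincide with your single plane $\Pi$), and your treatment of the lower bound $\dist{O,X_i}\geq\kappa\epsilon/2$ is slightly more careful than the paper's (which asserts it merely under $\kappa>1$ rather than $\kappa\geq 2/\sqrt{3}$).
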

	
	\begin{proof}
		The case $d=1$ is trivial, so we suppose $d\geq2$. Let $\tau_1$ and $\tau_2$ be the $(d-1)$-faces of $\Delta$ that intersect the rays $\vec{OA}$ and $\vec{OB}$, respectively. So $A'\in\tau_1$ and $B'\in\tau_2$. We distinguish two cases:
		\begin{enumerate}
			\item \underline{If $\tau_1=\tau_2$}. Denote $\tau=\tau_1=\tau_2$, so both $A'$ and $B'$ lie on the same $(d-1)$-face $\tau$. In this case, let $\Pi$ be the plane through $O,A$ and $B$. The intersection $\tau\cap\Pi$ must be a line segment containing $A'$ and $B'$. Note that if $\ell$ is the line containing $\tau\cap\Pi$, then $O,A,B$ and $\ell$ satisfy all the hypothesis of Lemma \ref{lemma:rays_plane}, so $$\dist[\partial\Delta]{A',B'}\leq\frac{d_2^2}{\kappa d_1}\leq 2(d+1)\frac{d_2^2}{\kappa d_1}.$$
			\item \underline{If $\tau_1\neq\tau_2$}. Consider the line segment $\overline{AB}$, which lies in the interior of $\Delta$, and let $p:\overline{AB}\to\partial\Delta$ be the projection on $\partial\Delta$ from $O$, i.e. the intersection of rays $\vec{OX}$ for all points $X\in\overline{AB}$. Note $p(\bar{AB})$ will be a polygonal line from $A'\in \tau_1$ to $B'\in \tau_2$, passing through faces $\tau_1=\sigma_1, \sigma_2, \dots, \sigma_r=\tau_2$ of $\partial\Delta$, where $r\leq(d+1)$. Let $E'_i=p(\bar{AB})\cap\sigma_i\cap\sigma_{i+1}$ for each $i=1, \dots, r-1$, and say $E_0'=A'$ and $E_r'=B'$. Correspondingly, let $E_i\in\bar{AB}$ such that $p(E_i)=E_i'$. For each $i=1,\dots, r$, let $\Pi_i$ be the plane through $E_{i-1}, E_{i}$ and $O$. As in the first case, let $\ell_i$ be the line containing $\Pi_i\cap\sigma_i$. Clearly $\dist{E_{i-1},E_i}\leq\epsilon$, and, given $\kappa>1$, we also know $\dist{\bar{AB},O}\geq \frac{\kappa\epsilon}{2}$, so $\dist{E_{i-1},O}, \dist{E_i,O}\geq \kappa\epsilon/2$, then these points and $\ell_i$ satisfy the hypothesis of Lemma \ref{lemma:rays_plane} with $\kappa/2$, so $\dist[\partial\Delta]{E_{i-1}',E_i'}\leq\frac{2d_2^2}{\kappa d_1}$.  Then%
			$$\dist[\partial\Delta]{A',B'}\leq \dist[\partial\Delta]{E_0',E_1'}+\dots+\dist[\partial\Delta]{E_{r-1}',E_r'}\leq r\frac{2d_2^2}{\kappa d_1}\leq 2\frac{(d+1)d_2^2}{\kappa d_1}.$$\qedhere
		\end{enumerate}
	\end{proof}

	We will use the next lemma to translate the volume of balls on $E_kG$ back to $K$, where the random points are drawn. 
	
	\begin{lemma}\label{lemma:volume_k_projections}
		Let $\sigma$ be a geometric $d$-dimensional simplex, $\tau$ a $k$-face of $\sigma$, and $\Psi:\sigma\to\tau$ a simplicial map that leaves $\tau$ invariant. Then there exists a constant $C$ such that for each measurable subset $E\subset\sigma$, we have
		$$\vol[d]{E}\leq C\vol[k]{\Psi(E)}.$$ 
	\end{lemma}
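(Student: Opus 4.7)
The plan is to exploit the fact that $\Psi$, being a simplicial map between geometric simplices, extends to an affine map of the ambient spaces, and then to apply Fubini's theorem on its fibers. Because $\Psi$ leaves the $k$-face $\tau$ invariant (sending it to itself) and has $k$-dimensional image, its (affine) kernel has dimension exactly $d-k$, so the fibers of $\Psi|_\sigma$ are intersections of $\sigma$ with parallel $(d-k)$-dimensional affine subspaces.

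First I would set up coordinates: choose an affine isomorphism of the affine hull of $\sigma$ with $\R^d$ which identifies the affine hull of $\tau$ with $\R^k \times \{0\} \subset \R^k \times \R^{d-k}$ and makes $\Psi$ coincide (up to a linear reparametrization on $\tau$ coming from the vertex permutation induced by $\Psi|_\tau$) with the projection $(x,y) \mapsto (x,0)$. Under this identification, $d$-dimensional Lebesgue measure on $\sigma$ differs from the intrinsic $d$-volume by a fixed Jacobian factor, and the same is true for the $k$-volume on $\tau$; both factors are absorbed into the final constant $C$.

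Next I would bound the fibers uniformly: for any $x \in \tau$, the fiber $\Psi^{-1}(x) \cap \sigma$ is the intersection of $\sigma$ with an affine $(d-k)$-plane, hence a compact convex set contained in a translate of a fixed $(d-k)$-dimensional subspace. Its $(d-k)$-volume is therefore bounded above by some constant $M$ depending only on $\sigma$, $\tau$, and $\Psi$ (for example, by the $(d-k)$-volume of the projection of $\sigma$ onto the kernel of $\Psi$). Then by Fubini in the chosen coordinates,
\begin{equation*}
\vol[d]{\Psi^{-1}(A) \cap \sigma} \;\leq\; M \cdot \vol[k]{A}
\end{equation*}
for every measurable $A \subset \tau$. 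Applying this with $A = \Psi(E)$ and using $E \subset \Psi^{-1}(\Psi(E))$ gives
\begin{equation*}
\vol[d]{E} \;\leq\; \vol[d]{\Psi^{-1}(\Psi(E)) \cap \sigma} \;\leq\; M \cdot \vol[k]{\Psi(E)},
\end{equation*}
so $C = M$ (adjusted by the fixed Jacobian factors from the coordinate change) does the job.

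I do not expect a genuine obstacle here; the only subtle point is that $\Psi(E)$ need not be measurable if $E$ is only measurable (the simplicial map is continuous but not in general open), and Fubini requires a measurable $A$. I would handle this either by passing to the outer $k$-measure of $\Psi(E)$, or more cleanly by noting that $E \subset \Psi^{-1}(\overline{\Psi(E)})$ and replacing $\Psi(E)$ by its closure (a compact, hence Borel, set in $\tau$) in the middle inequality; the final statement of the lemma is unchanged since $\vol[k]{\overline{\Psi(E)}} \geq \vol[k]{\Psi(E)}$ whenever the right-hand side is defined.
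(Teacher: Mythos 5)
Your proof is correct, and the core idea — reduce to a coordinate projection and bound each fiber's volume — is the same engine the paper uses, but you implement it differently. The paper first proves the codimension-one case ($k=d-1$) via the explicit observation that any $F\subset\Delta_d$ satisfies $F\subset\pi_{d-1}(F)\times[0,1]$ (so the fiber volume is bounded by $1$ after normalizing coordinates to the standard simplex), and then handles general $k<d$ by factoring $\Psi$ through the chain of faces $\sigma=\tau_d\to\tau_{d-1}\to\cdots\to\tau_k=\tau$ and iterating the codimension-one estimate. You instead argue directly in arbitrary codimension: choose affine coordinates that turn $\Psi$ into the projection $(x,y)\mapsto(x,0)$ on $\R^k\times\R^{d-k}$, observe that the fibers $\Psi^{-1}(x)\cap\sigma$ are uniformly bounded convex sets of $(d-k)$-volume at most some $M$, and apply Fubini once. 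Your version avoids the induction and makes the role of the fiber bound more transparent; the paper's version stays in the explicit standard-simplex picture and only ever needs the trivial interval bound $\vol[1]{[0,1]}=1$. Your remark about the potential non-measurability of $\Psi(E)$ and the fix of using $E\subset\Psi^{-1}(\overline{\Psi(E)})$ (or outer measure) is a reasonable point of care that the paper's proof glosses over; it does not affect correctness in the paper's application, where $E$ is a preimage of an open ball. Both proofs are sound.
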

	
	\begin{proof}
		Let $e_1,e_2, \dots,e_d$ be the standard vector basis of $\R^d$, and let $e_0=\vec{0}$ be the origin. For each $m\leq d$, the points $\{e_0,\dots,e_m\}$ are affinely independent, so their convex hull is a geometric $m$-simplex which we denote $\Delta_m$. For each $r\leq m$, let $\pi_r:\Delta_m\to\Delta_r$ be the projection map to the first $r$ coordinates. Note $\pi_r$ can be seen as the simplicial map given by the vertex mapping
		$$\pi_r(e_i)=\begin{cases}
			e_i & \text{ if } i\leq r\\
			e_0 & \text{ if } i>r
		\end{cases}.$$
		
		We start by establishing the result when $k=d-1$. Enumerate the vertices of $\sigma$, $V(\sigma)=\{a_0,\dots, a_d\}$, such that $V(\tau)=\{a_0,\dots, a_{d-1}\}$. Since the map $\Psi$ is determined by its mapping of the vertices, $\Psi(a_i)=a_i$ for all $0\leq 1\leq d-1$ and $\Psi(a_d)$ coincides with one of the other vertices, so shuffling the labels if necessary, $\Psi(a_d)=a_0$. 
		
		Then, there exists a unique affine transformation $\Phi:\sigma\to\Delta_d$ such that $\Phi(a_i)=e_i$ for $0\leq i\leq d$. Let $\phi=\Phi|_\tau$, note then $\phi:\tau\to\Delta_{d-1}$ is an affine map. Moreover, both $\Phi$ and $\phi$ are bijective, so there exist invertible matrices $A, A'$ and vectors $b,b'$ such that $\Phi(x)=Ax+b$ and $\phi(x)=A'x+b'$.  We then get that the following diagram of simplicial maps commutes
		
		\begin{center}
			\begin{tikzcd}[row sep=large,column sep=large]
				\sigma \arrow[]{d}{\Phi} \arrow[]{r}[name=U]{\Psi} 
				& \tau \arrow{d}[swap,shift right=2]{\phi} \arrow[<-,shift left=2]{d}{\phi^{-1}}\\
				\Delta_d \arrow[]{r}[name=D]{\pi} & \phantom{x}\Delta_{d-1}
				\arrow[to path={(D) node[midway,scale=1.5] {$\circlearrowleft$} (U)}]{}
			\end{tikzcd}
		\end{center}
		
		where $\pi=\pi_{d-1}$. In particular, we have $\phi^{-1}\circ\pi\circ\Phi=\Psi$. Let us also point out that for any set $F\subset\Delta_d$, we have $F\subset \pi(F)\times[0,1]$. 
		
		Then, the volume of any measurable set $E\in \sigma$ satisfies:
		\begin{align*}
			\vol[d]{E}&=\frac{\vol[d]{\Phi E}}{|\det A|}\\
			&\leq
			\frac{\vol[d]{(\pi\Phi E)\times[0,1]}}{|\det A|}
			=\frac{\vol[d-1]{\pi\Phi E}\cdot 1}{|\det A|}\\
			&=\frac{|\det A'|}{|\det A|}\vol[d-1]{\phi^{-1}\pi\Phi E}
			=C\vol[d-1]{\Psi E}. 
		\end{align*} 
		
		We now extend the result for any $k<d$. Suppose $V(\sigma)=\{a_0,\dots, a_d\}$ and $V(\tau)=\{a_0,\dots, a_k\}$. For each $k+1\leq i\leq d$, define the $i$-face $\tau_i\subset\sigma$ by $\tau_i=\{a_0,\dots, a_k, a_{k+1},\dots, a_i\}$, so $\tau_k=\tau$ and $\tau_d=\sigma$. Also for each $k+1\leq i\leq d$, define the map $\Psi_i:\tau_i\to\tau_{i-1}$ as the simplicial map given by
		$$\Psi_i(a_j)=\begin{cases}
			a_j, \text{ if } 0\leq j\leq i-1\\
			\Psi(a_i), \text{ if } j=i
		\end{cases}. $$
		
		Thus $\Psi$ factors through the faces $\tau_i$ as depicted in the following commutative diagram
		
		\begin{center}
			\begin{tikzcd}
				\sigma=\tau_d \rar{\Psi_d}
				\arrow[rrr, bend right, "\Psi"]
				& \tau_{d-1} \arrow[r,"\Psi_{d-1}"]  & \cdots\rar{\Psi_{k+1}} & \tau_k=\tau
			\end{tikzcd}
		\end{center}		
		Finally, each $\Psi_i:\tau_i\to\tau_{i-1}$ satisfies the hypothesis of the theorem for the special case that we already proved, so for each $E\subset\sigma=\tau_d$ measurable set, we get
		
		\begin{align*}
			\vol[d]{E}&\leq C_d\vol[d-1]{\Psi_d(E)}\\
			& \leq C_dC_{d-1}\vol[d-2]{\Psi_{d-1}\Psi_d(E)}\\
			&\vdots\\
			&\leq \left(\prod_{i=k+1}^{d}C_i\right) \vol[k]{\Psi_d\Psi_{d-1}\dots\Psi_{k+1}(E)}=C\vol[k]{\Psi(E)}
		\end{align*}
	\end{proof}
	
	\subsection{Other Preliminary Results}
	
	We state the following Lemma without proof. While the proof should be straightforward, it is quite technical so we omit it. Refer to \cite{Bridson1999} for more on geodesics of geometric simplicial complexes, with the intrinsic metric.  
	
	\begin{lemma}\label{lemma:length_chains}
		Let $K$ be a geometric simplicial complex and $\epsilon_0$ small. Then, there exists a constant $N$, depending only on $K$, such that for any $x,y\in K$ with $\dist{x,y}\leq\epsilon_0$, there is a polygonal path from $x$ to $y$ of length $\dist{x,y}$, made up of at most $N$ line segments, each of which is contained in a face of $K$. 
	\end{lemma}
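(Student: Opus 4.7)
The plan is to reduce to a local analysis using compactness and finiteness of $K$ as a simplicial complex. The open stars $\{\text{St}(v) : v\in V(K)\}$ form a finite open cover of $\|K\|$, and since $\|K\|$ is compact, this cover admits a positive Lebesgue number $\eta > 0$: every subset of $\|K\|$ of diameter less than $\eta$ sits inside some open star. I would set $\epsilon_0 = \eta/3$.

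Given $x,y\in \|K\|$ with $\dist{x,y} \leq \epsilon_0$, I would first extract a shortest path $\gamma$ from $x$ to $y$, whose existence follows by applying Arzelà--Ascoli to a sequence of piecewise-linear near-minimizers (available since the intrinsic metric is defined as the infimum of lengths of such paths). The length of $\gamma$ equals $\dist{x,y} \leq \epsilon_0$, so $\gamma$ is contained in the closed ball $\bar B(x,\epsilon_0)$, which has diameter at most $2\epsilon_0 < \eta$ and hence is contained in some closed star $\bar{\text{St}(v)}$, a finite subcomplex of $K$.

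The third step is to show that $\gamma$ decomposes into a bounded number of straight segments, each lying in a closed face of $\bar{\text{St}(v)}$. Within the interior of any closed face $\sigma$ (a convex Euclidean region), any subarc of $\gamma$ must be a straight segment; thus the only bending points of $\gamma$ can occur on lower-dimensional faces. Moreover, by convexity, $\gamma$ cannot have two disjoint subsegments whose interiors lie in the interior of the same top-dimensional face $\sigma$: replacing such a pair with the single straight chord inside $\sigma$ would strictly shorten $\gamma$, contradicting its minimality. An analogous induction on dimension controls re-entry into lower-dimensional faces. Consequently, the number of segments is bounded by the total number of faces of $\bar{\text{St}(v)}$, and setting $N := \max_{v\in V(K)} \#\{\text{faces of }\bar{\text{St}(v)}\}$ yields the required constant depending only on $K$.

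The main obstacle I anticipate is making the piecewise-linear structure of $\gamma$ precise, particularly when $\gamma$ passes through or travels along lower-dimensional faces; ruling out a geodesic spending a positive-length interval on a proper face (by pushing it into an adjacent top-dimensional face to get a strictly shorter path) requires a delicate local argument, and one must also verify that the ``bending points'' form a locally finite set. These facts are standard in the theory of length spaces and piecewise-Euclidean complexes, as developed in the Bridson--Haefliger reference cited by the author, which is precisely why the detailed proof is omitted.
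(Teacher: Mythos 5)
The paper does not actually prove this lemma: the author explicitly states that the proof is omitted as ``quite technical'' and defers to the Bridson--Haefliger reference, so there is no argument in the paper for you to be compared against. Your sketch follows the standard outline one would extract from that reference: use compactness and the finiteness of $K$ to obtain a Lebesgue number $\eta$ for the open-star cover, set $\epsilon_0$ small relative to $\eta$ so that any minimizing path between $\epsilon_0$-close points stays inside a single closed star (a finite subcomplex), produce a minimizer by Arzel\`a--Ascoli (or by Hopf--Rinow in a compact length space), and then bound the number of linear pieces by observing that inside each Euclidean face the minimizer is a straight segment and that a minimizer cannot revisit the relative interior of a face it has already left, by the chord-shortening argument. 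The resulting bound $N = \max_v \#\{\text{faces of } \bar{\text{St}(v)}\}$ depends only on $K$, as required.

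The gaps you flag yourself --- that a geodesic in a piecewise-Euclidean complex is in fact piecewise linear with a locally finite set of bending points, and that it cannot spend a positive-length interval traveling along a proper face when an adjacent top-dimensional face offers a strictly shorter route --- are precisely the technical points the author is deferring to \cite{Bridson1999}. Closing them rigorously (the local structure of geodesics near a lower-dimensional face, the role of link angles at a vertex, ruling out infinitely many oscillations) is the substantive content that makes this lemma nontrivial to prove from scratch; as written your proposal has the same logical shape as the paper's deferral, just with the skeleton fleshed out further. No contradiction or wrong turn, but be aware that the ``standard in the theory of length spaces'' clause is doing real work, not filler.
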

	
	\begin{theorem}\label{thm:homom_boundary}
		Let $G$ be a finite group, and $K$ a finite geometric $G$-simplicial complex of dimension $d$, such that $G$ acts on $K$ via isometries. Moreover, suppose $K$ is pure $d$-dimensional, and let $L=K^{(d-1)}$ its $(d-1)$-skeleton. Fix a constant $d_1>0$, and let $X_0\subset K$ be a collection of one point in the interior of each $d$-face of $K$, such that $gX_0=X_0$ for all $g\in G$, and $\dist{X_0,L}\geq d_1$. Given $\epsilon_0>0$, there exists a constant $\kappa$ depending only on $K$, $\epsilon_0$ and $d_1$, such that for any $\epsilon$ small enough, there is a graph homomorphism 
		
		$$\Phi:\Ggraph{G}{K\setminus B(X_0,\kappa\epsilon),\epsilon}\to \Ggraph{G}{L,\epsilon_0}.$$
		
		Here $B(X_0,\kappa\epsilon):=\bigcup_{x\in X_0} B(x,\kappa\epsilon)$. 
		
	\end{theorem}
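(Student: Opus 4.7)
The plan is to define $\Phi$ as a radial projection onto $L$ using the points of $X_0$ as centers. For each $d$-face $\sigma$ of $K$, let $x_\sigma\in X_0$ be the unique point of $X_0$ in the interior of $\sigma$, and for any $z\in\sigma$ with $\dist{z,x_\sigma}\geq\kappa\epsilon$, set $\Phi(z)$ to be the intersection of the ray $\vec{x_\sigma z}$ with $\partial\sigma$. Points already on $L$ are fixed, which makes the two definitions agree on shared boundaries of adjacent $d$-faces. Because $gX_0=X_0$ and $G$ acts isometrically, $gx_\sigma=x_{g\sigma}$, and the radial projection from $x_\sigma$ inside $\sigma$ conjugates under $g$ to the radial projection from $x_{g\sigma}$ inside $g\sigma$; hence $\Phi$ is automatically a $G$-map.

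The crux is to show that if $x,y\in K\setminus B(X_0,\kappa\epsilon)$ satisfy $\dist{x,gy}\leq\epsilon$ for some $g\neq\1$, then $\dist{\Phi(x),g\Phi(y)}\leq\epsilon_0$. Setting $z=gy$, the point $z$ still lies outside $B(X_0,\kappa\epsilon)$ since $gX_0=X_0$, and by $G$-equivariance it suffices to prove $\dist{\Phi(x),\Phi(z)}\leq\epsilon_0$. I would invoke Lemma \ref{lemma:length_chains} to obtain a polygonal path $x=v_0,v_1,\dots,v_N=z$ of total length $\dist{x,z}\leq\epsilon$, with each segment contained in a single closed face of $K$; after merging consecutive segments that happen to lie in the same face, every intermediate vertex $v_i$ sits at the intersection of two distinct maximal faces, and therefore lies in $L$.

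For each segment I would bound $\dist{\Phi(v_i),\Phi(v_{i+1})}$ by cases. If the segment lies in a face of dimension less than $d$, both endpoints belong to $L$ and are fixed by $\Phi$, yielding a contribution of at most $\dist{v_i,v_{i+1}}\leq\epsilon$. If it lies in a $d$-face $\sigma$, then an intermediate endpoint on $L$ satisfies $\dist{v_j,x_\sigma}\geq d_1$, while the path endpoints $x,z$ satisfy $\dist{v_j,x_\sigma}\geq\kappa\epsilon$; once $\epsilon\leq d_1/\kappa$, both cases give $\dist{v_j,x_\sigma}\geq\kappa\epsilon$, and Lemma \ref{lemma:phi_is_lips} bounds the contribution by $2(d+1)d_2^2/(\kappa d_1)$, where $d_2=\max_\sigma\diam{\sigma}$. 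Summing over the at most $N$ segments gives
\[
\dist{\Phi(x),\Phi(z)}\leq N\epsilon + N\cdot\frac{2(d+1)d_2^2}{\kappa d_1},
\]
and choosing $\kappa$ so that $2N(d+1)d_2^2/(\kappa d_1)\leq\epsilon_0/2$ together with $\epsilon\leq\min\{\epsilon_0/(2N),\,d_1/\kappa\}$ makes this $\leq\epsilon_0$.

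The main obstacle is purely bookkeeping: one must pick a single $\kappa$, depending only on $K$, $d_1$, and $\epsilon_0$, such that every segment of every polygonal path from Lemma \ref{lemma:length_chains} simultaneously satisfies the keep-away hypothesis of Lemma \ref{lemma:phi_is_lips}. The separation $\dist{X_0,L}\geq d_1$ and the finiteness of $K$ (which makes $d_2$ and $N$ global constants) are precisely what allow such a uniform $\kappa$ to exist; once $\kappa$ is fixed, the $\epsilon$-dependent part of the hypothesis is handled at the endpoints by the assumption $x,z\notin B(X_0,\kappa\epsilon)$ and at the intermediate vertices by the constraint $\epsilon\leq d_1/\kappa$.
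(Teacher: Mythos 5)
Your proposal is correct and follows essentially the same route as the paper: the same radial projection $\phi_\sigma$ from $x_\sigma$ onto $\partial\sigma$, the same $G$-equivariance argument via $gx_\sigma = x_{g\sigma}$, the same decomposition into a short polygonal path via Lemma \ref{lemma:length_chains}, the same application of Lemma \ref{lemma:phi_is_lips} to each segment using the dichotomy that intermediate vertices lie on $L$ (hence at distance $\geq d_1\geq\kappa\epsilon$ from $X_0$ once $\epsilon\leq d_1/\kappa$), and the same final choice of $\kappa\sim N(d+1)d_2^2/(d_1\epsilon_0)$. The only cosmetic difference is that you handle segments lying in sub-$d$-dimensional faces directly (since $\Phi$ is the identity there, giving an extra $N\epsilon$ in the bound and a correspondingly stronger smallness condition on $\epsilon$), whereas the paper routes even those segments through Lemma \ref{lemma:phi_is_lips} via a containing $d$-face so that every segment contributes at most $\epsilon_0/N$.
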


	\begin{proof}
		For each $d$-face $\sigma_i\in K$, let $x_i\in X_0$ be the point in $\sigma_i$. For $x\in\sigma_i\setminus\{x_i\}$, define $\phi_i(x)$ to be the unique point of intersection of the ray $\vec{x_ix}$ with the boundary $\partial\sigma_i$. Note this map $\phi_i:\sigma_i\setminus\{x_i\}\to\partial\sigma_i$ is continuous and it restricts to the identity on $\partial\sigma_i$. Moreover, for any $g\in G$, since $g$ acts as a linear map, the ray $\vec{x_ix}$ is mapped to the ray $\vec{g(x_i)g(x)}$, so if $g(x_i)=x_j\in X_0$, we get  
		$$g\left(\phi_{i}(x)\right)=\phi_{j}\left(g(x)\right).$$
		
		Let $K_d$ be the collection of $d$-faces of $K$, and let $d_2=\max_{\sigma\in K_d} \diam{\sigma}$.  Let $N$ be the constant from Lemma \ref{lemma:length_chains} corresponding to $\epsilon_0$. Define $\kappa=2\frac{N(d+1)d_2^2}{d_1\epsilon_0}$. Let $\epsilon<\epsilon_0$ be small enough so that $B(x_i,\kappa\epsilon)\subset\text{interior}(\sigma_i)$ for all $i$. Then, by Lemma \ref{lemma:phi_is_lips}, for each $i$ and $x, y\in \sigma_i\setminus B(x_i,\kappa\epsilon)$ such that $\dist[K]{x,y}\leq\epsilon$ we get $\dist[L]{\phi_i(x),\phi_i(y)}\leq\epsilon_0/N$.
		
		Define $\Phi:K\setminus B(X_0,\kappa\epsilon)\to L$ by pasting all the maps $\phi_i$. That is, if $x\in\interior{\sigma_i}$ for some $i$, let $\Phi(x)=\phi_i(x)$, and if $x\in L$, let $\Phi(x)=x$. Since all the maps $\phi_i$ are continuous and they all restrict to the identity on $L$, pasting them produces a continuous map. Moreover, $\Phi$ is a $G$-map:
		$$\Phi(g(x))=\phi_i(g(x))=g(\phi_j(x))=g\Phi(x),$$
		where $i$ and $j$ are the indices such that $g(x)\in\sigma_i$ and $x\in\sigma_j$; and $\Phi(g(x))$ is defined since $x\in \sigma_i\setminus B(x_i,\kappa\epsilon)$ and $g$ is an isometry, so $g(x)\in\sigma_j\setminus B(x_j,\kappa\epsilon)$.
			
		We now consider the induced map $$\Phi:V(\Ggraph{G}{K\setminus B(X_0,\kappa\epsilon),\epsilon})\to V(\Ggraph{G}{L,\epsilon_0}),$$ by checking that it respects the adjacency of vertices, we will have a graph homomorphism as desired. Note that to do this is enough to prove that $\dist[L]{\Phi(x),\Phi(y)}\leq\epsilon_0$ whenever $\dist[K]{x,y}\leq \epsilon$, and $x,y\in K\setminus B(X_0,\kappa\epsilon)$. Indeed, if $x\sim y$ in $\Ggraph{G}{K\setminus B(x_0,\kappa\epsilon),\epsilon}$ then there is a $g\in G$, $g\neq \1$, such that $\dist[K]{x,gy}, x\in K\setminus B(X_0,\kappa\epsilon)$ and since $g$ is an isometry, $gy\in K\setminus B(X_0,\kappa\epsilon)$. Thus if our claim holds, $\dist[L]{\Phi(x),g\Phi(y)}=\dist[L]{\Phi(x),\Phi(gy)}\leq\epsilon_0$ and $\Phi(x)\sim\Phi(y)$ in $\Ggraph{G}{L,\epsilon_0}$ as desired.\\
		
		We now prove the claim. Let $x,y\in K\setminus B(X_0,\kappa\epsilon)$, such that $\dist[K]{x,y}\leq\epsilon<\epsilon_0$. Then, there is a shortest polygonal path in $K$ between $x$ and $y$ of length less than $\epsilon$. This path passes through the $d$-faces $\tau_1, \tau_2, \dots, \tau_r$, where $x\in\text{interior}(\tau_1)$, $y\in\text{interior}(\tau_r)$ and $r\leq N$ by Lemma \ref{lemma:length_chains}. For each $i=1, \dots, r-1$, let $z_{i}$ be the intersection of this shortest path with $\tau_i\cap\tau_{i+1}$, let $z_0=x$ and $z_{r}=y$. Note $\dist[K]{z_i,z_{i+1}}\leq\epsilon$ for all $i=0, \dots r-1$, and $x_i\in L$ for $i\neq 0, r+1$, so $\dist[K]{X_0,z_i}>\kappa\epsilon$. 
		
		Take $i\in\{1, \dots, r\}$. If $\tau_i=\sigma_j$ is a $d$-face of $K$, as we noted before $z_{i-1}, z_{i}\in\sigma_j\setminus B(x_j,\kappa\epsilon)$ and\ $\dist[K]{z_{i-1},z_i}\leq\epsilon$, so $\dist[L]{\Phi(z_{i-1}), \Phi(z_i)}=\dist[L]{\phi(z_{i-1}),\phi(z_i)}\leq \frac{2(d+1)d_2^2}{\kappa d_1}\leq \frac{\epsilon_0}{N}$. If instead $\tau_i\in L$, there is some $d$-face $\sigma_j$ such that $\tau_i\leq\sigma_j$, and again $z_{i-1}, z_i\in\sigma_j\setminus B(x_j,\kappa\epsilon)$ and$\dist[K]{z_{i-1},z_i}= \epsilon$, so $\dist[L]{\Phi(z_{i-1}),\Phi(z_i)}\leq \frac{\epsilon_0}{N}$. Therefore, %
		$$\dist[L]{\Phi(x),\Phi(y)}\leq\sum_{i=1}^{r}\dist[L]{\Phi(z_{i-1}),\Phi(z_{i})}\leq \epsilon_0,$$
		which finishes the proof. 		
	\end{proof}
	
	Recall that a \textbf{$\bm{\delta}$-net} $\calB$ of a compact metric space $K$, is a finite subset such that for each $x\in K$, there is a $y\in\calB$ such that $\dist[K]{x,y}<\delta$. Similarly, a \textbf{$\bm{\delta}$-apart set} $\calB$ of $K$, is a finite subset such that for any $x,y\in\calB$ with $x\neq y$, $\dist[K]{x,y}>\delta$. The next lemma bounds the cardinality of $\delta$-nets and $\delta$-apart sets for geometric simplices, the technique mimics what we did for $d$-Spheres on \cite{Kahle-Martinez2020}.  
	
	\begin{lemma}\label{lemma:delta-nets}
		Let $\Delta$ be a geometric $d$-simplex. Then there exist constants $C_1(\Delta)$ and $C_2(\Delta)$ such that for every $\delta>0$ small enough, there is a $\delta$-net $\calB$ that is also a $\delta$-apart set, such that 
		$$\frac{C_1(\Delta)}{\delta^d}\leq |\calB|\leq \frac{C_2(\Delta)}{\delta^d}$$
	\end{lemma}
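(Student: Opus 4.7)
The plan is to produce $\mathcal{B}$ as a maximal $\delta$-apart subset of $\Delta$ and then bound $|\mathcal{B}|$ above and below by standard volume-packing and volume-covering arguments.

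First, I would construct $\mathcal{B}$ greedily: start with any point and repeatedly add a new point of $\Delta$ that is at distance $\geq\delta$ from all previously chosen ones, stopping when no such point exists. Since $\Delta$ is compact and any two chosen points are $\delta$-apart, the process terminates in finitely many steps, and the resulting $\mathcal{B}$ is $\delta$-apart by construction. The key observation is that $\mathcal{B}$ is automatically a $\delta$-net: if some $x\in\Delta$ had $\dist[\R^d]{x,y}\geq\delta$ for all $y\in\mathcal{B}$, we could have added $x$, contradicting maximality.

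Next I would prove the upper bound. Because $\mathcal{B}$ is $\delta$-apart, the open balls $\{B(x,\delta/2):x\in\mathcal{B}\}$ are pairwise disjoint. The main geometric ingredient is a uniform lower bound
\[
\vol[d]{B(x,\delta/2)\cap\Delta}\;\geq\;c\,\delta^d
\qquad\text{for all } x\in\Delta\text{ and all small }\delta,
\]
for some constant $c=c(\Delta)>0$. Granting this, summing over $x\in\mathcal{B}$ and comparing with $\vol[d]{\Delta'}$, where $\Delta'$ is a fixed bounded neighborhood of $\Delta$, yields $|\mathcal{B}|\leq C_2(\Delta)/\delta^d$.

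For the lower bound, since $\mathcal{B}$ is a $\delta$-net, the balls $\{B(x,\delta):x\in\mathcal{B}\}$ cover $\Delta$. Each has volume $\omega_d\,\delta^d$, so
\[
\vol[d]{\Delta}\;\leq\;\sum_{x\in\mathcal{B}}\vol[d]{B(x,\delta)}\;=\;|\mathcal{B}|\,\omega_d\,\delta^d,
\]
which gives $|\mathcal{B}|\geq C_1(\Delta)/\delta^d$ with $C_1(\Delta)=\vol[d]{\Delta}/\omega_d$.

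The only nontrivial step is the uniform wedge bound $\vol[d]{B(x,\delta/2)\cap\Delta}\geq c\delta^d$. I would argue this by regarding $\Delta$ as the intersection of $d+1$ closed half-spaces $H_0,\ldots,H_d$ bounded by the hyperplanes through its facets. For each $x\in\Delta$ there is a tangent cone $C_x=\bigcap_{i:\,x\in\partial H_i}H_i$, a polyhedral cone with apex $x$ containing a small neighborhood of $x$ inside $\Delta$. The cone $C_x$ has positive solid angle $\alpha_x>0$, and since it depends only on which facets of $\Delta$ pass through $x$, there are only finitely many possibilities for $C_x$ up to translation. Taking $\alpha=\min_x\alpha_x>0$, for $\delta$ smaller than the distance from $x$ to any facet not containing $x$ we get $B(x,\delta/2)\cap\Delta\supset B(x,\delta/2)\cap C_x$, whose volume is $\alpha(\delta/2)^d/\omega_d$; choosing $\delta_0$ uniformly small relative to the geometry of $\Delta$ gives the desired bound with $c=\alpha/(2^d\omega_d)$. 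This wedge-at-every-point estimate is the main technical point; everything else is the standard packing/covering dichotomy.
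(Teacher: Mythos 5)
Your construction of $\mathcal{B}$ and your lower bound on $|\mathcal{B}|$ match the paper exactly. For the upper bound, however, the paper avoids the wedge estimate entirely: since the balls $B(y_i,\delta/2)$ are pairwise disjoint and (for $\delta$ less than the inradius) each is contained in $2\Delta$, the simplex dilated by a factor $2$ about its incenter, one gets $|\mathcal{B}|\,\omega_d(\delta/2)^d\le\vol[d]{2\Delta}=2^d\vol[d]{\Delta}$ directly. Your route instead proves a uniform lower bound $\vol[d]{B(x,\delta/2)\cap\Delta}\ge c\delta^d$ and then compares with $\vol[d]{\Delta}$ (the mention of a neighborhood $\Delta'$ is superfluous, since the intersections already lie in $\Delta$). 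Both routes give the same asymptotics; the paper's is strictly simpler for this lemma.

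That said, your tangent-cone argument for the wedge bound has a real gap. You need $\delta$ small enough that $B(x,\delta/2)$ avoids every facet hyperplane $\partial H_i$ with $x\notin\partial H_i$, so that $B(x,\delta/2)\cap\Delta = B(x,\delta/2)\cap C_x$. But this threshold degenerates: as $x$ ranges over the relative interior of a face, it can approach arbitrarily closely a facet not containing it, so no single $\delta_0>0$ works uniformly over $x\in\Delta$. (Also note the inclusion should be an equality, not $\supset$; since $\Delta\subset C_x$, writing $B\cap\Delta\supset B\cap C_x$ is backwards, though what you actually use is the equality, which does hold under the non-uniform distance condition.) The uniform wedge bound is true -- the paper itself invokes it without proof in the dense-case argument of Theorem 6.3 -- but a correct proof should, e.g., observe that for any $x\in\Delta$ the convex hull of $\{x\}$ and a fixed ball $B(z_0,r/2)$ about the incenter lies in $\Delta$ and subtends a solid angle at $x$ bounded below in terms of $r$ and $\diam{\Delta}$, which gives a genuinely $x$-uniform sector inside $B(x,\delta/2)\cap\Delta$. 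As written, your proof of the key technical ingredient does not go through.
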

	
	\begin{proof}
		Regard $\Delta$ as embedded in $\R^d$, with its incenter at the origin. Let $\delta>0$ small. Since $\Delta$ is compact, we can construct $\calB$ inductively. Indeed, choose any
		point $y_1 \in\Delta$. Then, for each $n\geq2$, if $B_n = \cup_{i=1}^n B(y_i, \delta) \subsetneq \Delta$, choose any $y_{n+1} \in \Delta B_m$. Otherwise, stop
		and let $\calB = \{y_1, y_2,\dots, y_n\}$. The process stops by compactness. Clearly $\calB$ is a $\delta$-net and a $\delta$-apart set. Note $\Delta\subset\bigcup B(y_i,\delta)$, thus
		\begin{align*}
			\vol{\Delta} \leq\vol{\bigcup B(y_i,\delta)}&\leq |\calB|\vol{B(y_i,\delta)}=|\calB|\omega_d\delta^d\text{, so}\\
			\frac{\vol{\Delta}}{\omega_d\delta^d} &\leq |\calB|.
		\end{align*}  
		Here $\omega_d$ is the volume of the unit $d$-ball. 
		If $\delta$ is smaller than the inradius of $\Delta$, we always have $B(x,\delta/2)\subset 2\Delta$, for all $x\in \Delta$. Since the $y_i's$ are a $\delta$-apart set, clearly $B(y_i,\delta/2)\cap B(y_j,\delta/2)=\emptyset$ whenever $i\neq j$, and so we have	
		\begin{align*}
			2^d\vol{\Delta}=\vol{2\Delta} \geq \vol{\biguplus B(y_i,\delta/2)}&=|\calB|\vol{B(y_i,\delta/2)}=|\calB|\frac{\omega_d\delta^d}{2^d}\text{, so}\\
			\frac{4^d\vol{\Delta}}{\omega_d\delta^d} &\geq|\calB|.	
		\end{align*}
		Thus setting $C_1(\Delta)=\frac{\vol{\Delta}}{\omega_d}$ and $C_2(\Delta)=\frac{4^d\vol{\Delta}}{\omega_d}$, we get the result. 	
	\end{proof}
	
	\begin{lemma}\label{lemma:g_map_lipschitz_volumes}
		Let $G$ be a finite group and $K$ a finite geometric $G$-simplicial complex. Denote $k=\ind{G}{K}\leq\text{dim}(K)=d$. Then, there exists a Lipschitz $G$-simplicial map $\Phi:K\xrightarrow{G}E_k G$ and a constant $\bar{C}$ such that
		$$\vol[d]{\Phi^{-1}\left(B(x,\delta)\right)}\leq\bar{C}\vol[k]{B(x,\delta)}$$ for all $x$ and $\delta$ such that $B(x,\delta)$ is contained in the interior of a $k$-face of $E_k G$. 
	\end{lemma}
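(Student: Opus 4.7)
The plan is to promote an abstract $G$-map into a simplicial $G$-map, and then exploit the fact that a simplicial map sends a $d$-simplex to at most a $k$-simplex in a controlled, piecewise affine way — which is precisely the setup of Lemma \ref{lemma:volume_k_projections}. Since $\ind{G}{K}=k$, there is a continuous $G$-map $K\xrightarrow{G}E_kG$. Applying an equivariant version of the simplicial approximation theorem (subdividing $K$ barycentrically finitely many times, which is a $G$-invariant operation and does not change $\|K\|$ nor any volume we compute), I would produce a simplicial $G$-map $\Phi:K\xrightarrow{G}E_kG$. Because $K$ has only finitely many faces and $\Phi$ is piecewise linear, $\Phi$ is automatically Lipschitz.

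Next, fix a $k$-face $\tau$ of $E_kG$ and a ball $B(x,\delta)\subset\interior(\tau)$. The key structural observation is that if $\sigma$ is a face of $K$ with $\Phi(\sigma)\cap\interior(\tau)\neq\emptyset$, then $\Phi(\sigma)$ is a closed simplex of $E_kG$ containing an interior point of $\tau$; since simplices of $E_kG$ have dimension at most $k=\dim\tau$, this forces $\Phi(\sigma)=\tau$. Consequently, up to a set of $d$-measure zero,
\[
\Phi^{-1}\bigl(B(x,\delta)\bigr)=\bigsqcup_{\sigma\in K_d\,:\,\Phi(\sigma)=\tau}\bigl(\Phi|_\sigma\bigr)^{-1}\!\bigl(B(x,\delta)\bigr),
\]
where $K_d$ denotes the collection of $d$-faces of $K$. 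This reduces the problem to one $d$-simplex at a time.

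For each such $\sigma$, the restriction $\Phi|_\sigma:\sigma\to\tau$ is an affine surjective simplicial map onto a $k$-simplex. I would then apply Lemma \ref{lemma:volume_k_projections} by composing $\Phi|_\sigma$ with an affine isomorphism that identifies $\tau$ with a chosen $k$-face $\tau'\subset\sigma$ (such an isomorphism exists because both are $k$-simplices, and it distorts $k$-volume by a bounded factor depending only on $\sigma$ and $\tau$). The Lemma then yields a constant $C_\sigma$ with
\[
\vol[d]{\bigl(\Phi|_\sigma\bigr)^{-1}\!\bigl(B(x,\delta)\bigr)}\leq C_\sigma\,\vol[k]{B(x,\delta)}.
\]
Summing over the finitely many $d$-faces $\sigma$ with $\Phi(\sigma)=\tau$, and finally taking the maximum of the resulting bounds over the finitely many $k$-faces $\tau$ of $E_kG$, produces the desired uniform constant $\bar C$.

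The main subtlety — and hence the step I expect to be the principal obstacle — is the invocation of Lemma \ref{lemma:volume_k_projections}, whose hypothesis requires $\tau$ to be a face of $\sigma$. In our situation $\tau$ lives in $E_kG$ rather than in $\sigma$, so one must either argue that the proof of that Lemma goes through verbatim for any surjective simplicial map $\sigma\to\tau'$ onto a $k$-simplex, or insert the affine identification described above and track the resulting distortion constant. The rest of the argument (simplicial approximation equivariantly, Lipschitz continuity on a finite complex, and measure-zero boundary terms) is routine.
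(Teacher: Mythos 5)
Your proposal is correct and follows essentially the same route as the paper's proof: equivariant simplicial approximation, reduction to $d$-faces $\sigma$ with $\Phi(\sigma)=\tau$, and an application of Lemma \ref{lemma:volume_k_projections} after factoring $\Phi|_\sigma$ through a $k$-face of $\sigma$ via the affine isomorphism $\varphi=\Phi|_\omega$ (whose Jacobian determinant is exactly the ``distortion constant'' you flag). The subtlety you identify — that $\tau$ is not literally a face of $\sigma$ — is resolved in the paper precisely by this factorization, so your insertion of the affine identification is the intended argument, not a gap.
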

	
	\begin{proof}
		Since $k=\ind{G}{K}$, by definition there is some $G$-map $\Phi:K\xrightarrow{G}\ E_k G$. Moreover, by the simplicial approximation theorem, we may assume that $\Phi$ is a simplicial map, substituting $K$ and $E_kG$ by some finer subdivisions if necessary. While the more general simplicial approximation theorem doesn't guarantee that we still get a $G$-map, this can be achieved by the standard procedure of defining it on a representative of each $G$-orbit and then extending it appropriately (see e.g. \cite{Matousek2008,Kozlov2008}). Since a simplicial map induces a map between the geometric complexes via linear extensions,  $\Phi:K\xrightarrow{G} E_kG$ is just a piece-wise linear map, determined by the images of the vertices of $K$. As such, and since linear maps are continuous iff they are bounded, $\Phi$ must be a Lipschitz map. 
		
		Suppose now that $B(x,\delta)$ is contained in the interior of $k$-face $\tau$ of $E_kG$. Since $\Phi$ is a simplicial map, $\Phi^{-1}(\tau)$ is a sub-simplicial complex of $K$. Clearly the $d$-volume of $\Phi^{-1}(B(x,\delta))\cap\sigma$ is zero whenever $\dim(\sigma)<d$. At the same time, if $\sigma$ is a face of $\Phi^{-1}(\tau)$ such that $\dim(\Phi(\sigma))<k$, then $B(x,\delta)\cap\Phi(\sigma)=\emptyset$ so $\Phi^{-1}(B(x,\delta))\cap\sigma=\emptyset$. Hence, the $d$-volume of $\Phi^{-1}(B(x,\delta))$ only depends on it intersection with the $d$-faces of $\Phi^{-1}(\tau)$ that are mapped onto $\tau$ surjectively. 
		
		Denote $K_d(\tau)=\left\{\sigma\in K: \dim(\sigma)=d\text{ and }\Phi(\sigma)=\tau\right\}$ and let $\sigma\in K_d(\tau)$. Enumerate $\sigma=\{a_0, a_1,\dots, a_d\}$ such that if $\omega=\{a_0,\dots, a_k\}$, then $\Phi(\omega)=\tau$. Thus, for each $a_i$ with $i>k$, $\Phi(a_i)=\Phi(a_j)$ for exactly one index $0\leq j\leq k$. Let $\Psi:\sigma\to\omega$ be the simplicial map given by the assignment of vertices 
		$$\Psi(a_i)=\begin{cases}
			a_i, \text{ if } 0\leq i\leq k\\
			a_j, \text{ s.t. } \Phi(a_i)=\Phi(a_j), \text{ if } k+1\leq i\leq d
		\end{cases},$$
		and let $\varphi=\Phi|_\omega$ the restriction of $\Phi$ to $\omega$. Note that these two maps give the factorization $\Phi=\Psi\circ\varphi$. Note also that $\varphi$ is a bijective affine transformation, so there exists an invertible matrix $A$ and a vector $b$ such that $\varphi(x)=Ax+b$. Let $E(\sigma)=\Phi^{-1}(B(x,\delta))\cap\sigma$, clearly $E(\sigma)$ is a measurable subset, and applying Lemma \ref{lemma:volume_k_projections} to $\sigma$, $\omega$ and the map $\Psi$, there exists a constant $C(\sigma)$ such that
		
		\begin{align*}
			\vol[d]{E} 
			&\leq C(\sigma)\vol[k]{\Psi(E)}
			=\frac{C(\sigma)}{|\det A|}\vol[k]{\varphi\Psi(E)}\\
			&=\frac{C(\sigma)}{|\det A|}\vol[k]{\Phi(E)}
			\leq \frac{C(\sigma)}{|\det A|}\vol[k]{B(x,\delta)}\\
			&=\bar{C}(\sigma)\vol[k]{B(x,\delta)}.
		\end{align*}
		
		Thus, letting $$\hat{C}(\tau)=\sum_{\sigma\in K_d(\tau)}\bar{C}(\sigma),\phantom{xx}\text{and} \phantom{xx} \bar{C}=\max_{\substack{\tau\in E_kG\\ \dim(\tau)=k}}\hat{C}(\tau),$$ we get the desired inequality. 
	\end{proof}
	
	\subsection{Proof of Theorem \ref{thm:chromatic_random}}
	
	\subsubsection{\textit{Dense} Case}
	
	If $\epsilon\to 0$ slowly enough, with high probability the random vertices will form a small net, and so Theorem \ref{thm:rel_graph_cov} will guarantee that the chromatic number is the same as $\cov{G}{K}$. The argument behind is a union bound, and the independence of the random vertices. 
	
	\begin{proof}[Proof of dense case]
		Note that as soon as $\epsilon(n)$ is small enough, Theorem \ref{thm:rel_graph_cov} item 2 gives the upper bound $$\chi(\Ggraph{G}{n,\epsilon})\leq\cov{G}{K}.$$
		
		Let $K_d=\{\sigma\in K | \dim(\sigma)=d\}$ be the set of $d$-faces of $K$. Thus, since $K$ is pure $d$-dimensional, we have $\lVert K\rVert=\left\lVert\bigcup_{\sigma\in K_d}\sigma\right\rVert$. By Lemma \ref{lemma:delta-nets}, for each $d$-face $\sigma\in K_d$, there exist constants $\overline{C}(\sigma)$ such that for any given $\delta>0$ small, we can find a $\delta$-net $\calB(\sigma)$ of $\sigma$ such that $|\calB(\sigma)|\leq \overline{C}(\sigma)\delta^{-d}$. Define a global $\overline{C}:=\sum_{\sigma\in K_d} \overline{C}(\sigma)$. Hence, by defining $\calB=\bigcup_{\sigma\in K_d} \calB(\sigma)$ we can always find a $\delta$-net  of $K$ such that $|\calB|\leq \bar{C}\delta^{-d}$. 
		
		Similarly, the volume of a $d$-ball of radius $\delta$ is proportional to $\delta^d$. And it is known that for a $d$-face $\sigma$, if $\delta$ is small enough and $x\in \sigma$ then  
		$$V(B(x,\delta)\cap \sigma)\geq c(\sigma)\delta^d,$$ for a constant $c(\sigma)>0$ that depends on the \textit{hyper-angle} of the sharpest corner of $\sigma$. 
		Define $c:=\frac{\min_{\sigma\in K_d}c(\sigma)}{V(K)}$, thus for any point $x\in K$ and $\delta>0$ small, we have $$\frac{V(B(x,\delta)\cap K)}{V(K)}\geq\frac{V(B(x,\delta)\cap\sigma)}{V(K)}\geq\frac{c(\sigma)}{V(K)}\delta^d\geq c\delta^d.$$
		
		Let $D$ be as in Theorem \ref{thm:rel_graph_cov} item 1. Then, we claim that  $C_1=\frac{2D}{\sqrt[d]{c}}$ is as stated in the theorem. That is, if $\epsilon\geq C_1(\log n/n)^{1/d}$, then we will show that a.a.s. $\cov{G}{K}\leq\chi\left(\Ggraph{G}{n,\epsilon}\right)$.
		
		For each $n$, let $\delta=\epsilon(n)/(2D)$, and consider the corresponding $\delta$-net of $K$, $\calB=\{y_1,\dots, y_N\}$. For each $i=1,\dots, N$ define the closed set $F_i=B(y_i,\delta)\cap K$. Then $K=\cup_{i=1}^N F_i$, $\frac{V(F_i)}{V(K)}\geq c\delta^d$, and $N\leq\bar{C}\delta^{-d}\leq \bar{C}c(n/\log n)$  as discussed above.  
		
		Let $X=\{x_1,\cdots, x_n\}$ be the vertices of the random $G$-Borsuk graph, i.e. they are i.i.d. uniform random points on $K$. The computations below will establish that a.a.s. $X$ is a $(\epsilon/D)-net$ of $K$, and thus Theorem \ref{thm:rel_graph_cov} item 1 gives the desired lower bound for the chromatic number. 
		
		For any $i=1,\dots, N$ and $j=1,\dots, n$ we have	
		\begin{align*}
			\P{x_j\in F_i} & =\frac{V(F_i)}{V(K)}\geq c\delta^d=c\frac{\epsilon^d}{(2D)^d} \geq \frac{\log n}{n}.
		\end{align*}
		
		Then, the probability that some of the sets $F_i$ doesn't contain any vertex of $X$ is
		\begin{align*}
			\P{\bigvee_{i=1}^N(X\cap F_i=\emptyset)} &\leq \sum_{i=1}^N\P{X\cap F_i=\emptyset} = \sum_{i=1}^N\P{x_j\not\in F_i}^n\\
			& \leq \sum_{i=1}^N(1-\log n/n)^n\leq N(1-\log n/n)^n\\
			&\leq \bar{C}c\frac{n}{\log n}e^{-\log n}=\frac{\bar{C}c}{\log n}\to 0 \text{ as }n\to\infty.
		\end{align*}
		Thus, a.a.s. every set $F_i$ will contain some vertex of $X$, and hence the set $X$ is a $2\delta=(\epsilon/D)$-net of $K$ as desired.  
	\end{proof}
	
	\subsubsection{\textit{Sparse} Case}
	When $\epsilon\to 0$ fast, and if $K=E_kG$, a.a.s. there will be a point $x_0\in K$, not too close to $K^{(k-1)}$, such that its orbit under $G$ is always at a distance at least $\kappa\epsilon$ of all the random vertices of $\Ggraph{G}{n,\epsilon}$. In this case, Theorem \ref{thm:homom_boundary} provides a graph homomorphism from the random $G$-Borsuk graph on $K$ to a $G$-Borsuk graph on $K^{(k-1)}$, and so Theorem \ref{thm:rel_graph_cov} gives the desired upper bound. When $K$ is not a finite classifying space, the same will still be true by looking at the pre-images of the $G$-map $\Phi:K\to E_kG$.  
	
	The idea behind the proof is to consider the pre-images of open balls around many orbits $Gx_0$ in $E_kG$, and prove that a.a.s. there will be at least one of them that doesn't contain any vertex of $\Ggraph{G}{n,\epsilon}$. To get spacial independence of random events, we consider a \textit{Poissonization} of the random vertices, i.e. we draw them via a Poisson Point Process instead. We include a couple of results about Poisson Point Processes and the Poisson distribution without proof, for their proofs and a complete discussion refer to \cite{Penrose2003} or \cite{Kingman1993}. 
	
	\begin{theorem}[Poissonization]\label{thm:poissonization}
		Let $x_1, x_2, \dots$, be uniform random variables on a compact metric space $K$. Let $m\sim \text{Pois}(\lambda)$ and let $\eta$ be the random counting measure associated to the point process $P_\lambda$= $\{x_1,x_2,\dots,x_m\}$. Then $P_\lambda$ is a Poisson Point Process and for a Borel \mbox{$A\subset K$, $\eta(A)\sim \pois{\lambda\frac{V(A)}{V(K)}}$}.
	\end{theorem}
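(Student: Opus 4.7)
The plan is to verify the two defining properties of a Poisson point process with intensity measure $\mu(A) = \lambda V(A)/V(K)$: namely, (i) $\eta(A) \sim \pois{\mu(A)}$ for every Borel set $A$, and (ii) for any finite collection of pairwise disjoint Borel sets $A_1,\dots,A_r$, the counts $\eta(A_1),\dots,\eta(A_r)$ are mutually independent. Both facts follow from one key computation, namely a conditioning argument on $m$ coupled with the Poisson/multinomial mixture identity.

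First I would handle property (i) directly. Writing $p=V(A)/V(K)$, conditional on $m=k$ each of the i.i.d.\ uniform points $x_1,\dots,x_k$ lies in $A$ independently with probability $p$, so $\eta(A)\mid m=k \sim \text{Binomial}(k,p)$. Then, for any $j\geq 0$,
\begin{align*}
\P{\eta(A)=j} &= \sum_{k\geq j}\binom{k}{j}p^j(1-p)^{k-j}e^{-\lambda}\frac{\lambda^k}{k!} \\
&= e^{-\lambda}\frac{(\lambda p)^j}{j!}\sum_{k\geq j}\frac{(\lambda(1-p))^{k-j}}{(k-j)!} = e^{-\lambda p}\frac{(\lambda p)^j}{j!},
\end{align*}
so $\eta(A)\sim\pois{\lambda p}$. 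That takes care of the marginal statement in the theorem.

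Next I would verify property (ii). For disjoint Borel sets $A_1,\dots,A_r$, set $A_0 = K\setminus\bigcup_{i\geq 1}A_i$, $p_i = V(A_i)/V(K)$, and note that conditional on $m=k$ the vector $(\eta(A_0),\dots,\eta(A_r))$ is multinomial with parameters $(k;p_0,\dots,p_r)$. For nonnegative integers $k_0,\dots,k_r$ with $k=k_0+\cdots+k_r$,
\begin{align*}
\P{\eta(A_0)=k_0,\dots,\eta(A_r)=k_r} &= \frac{k!}{k_0!\cdots k_r!}p_0^{k_0}\cdots p_r^{k_r}\cdot e^{-\lambda}\frac{\lambda^k}{k!} \\
&= \prod_{i=0}^r e^{-\lambda p_i}\frac{(\lambda p_i)^{k_i}}{k_i!},
\end{align*}
where the last equality uses $\sum p_i=1$ to split the exponential. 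The joint law factors as a product of Poisson marginals, which simultaneously confirms independence and reconfirms each marginal distribution. Summing out $k_0$ yields the joint law of $(\eta(A_1),\dots,\eta(A_r))$, which is again a product of independent Poissons.

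These two properties are the classical definition of a Poisson point process with intensity measure $\lambda V(\cdot)/V(K)$, so $P_\lambda$ is a PPP with the asserted intensity and the marginal statement $\eta(A)\sim\pois{\lambda V(A)/V(K)}$ is exactly property (i). There is no real obstacle here; the only subtlety is being careful to include the complementary set $A_0$ in the multinomial step so that the probabilities $p_0,\dots,p_r$ sum to one and the mixture of Poisson and multinomial collapses cleanly into a product of independent Poissons.
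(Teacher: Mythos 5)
Your proof is correct and is essentially the standard mixture argument found in the references (Penrose, Kingman) to which the paper defers: the paper itself states this theorem without proof, explicitly citing those texts instead. Both the marginal computation via the Poisson--binomial mixture and the joint factorization via the Poisson--multinomial mixture are carried out correctly, and the observation that one must include the complementary cell $A_0$ so that the $p_i$ sum to one is exactly the right detail to flag. One minor point you could make more explicit is why the result (i) does not already follow from (ii): property (i) as stated holds for \emph{every} Borel $A$, including $A=K$, which is a degenerate case of the multinomial step with $r=0$, so strictly speaking (ii) with the complement subsumes (i); your presentation of (i) first is nonetheless a reasonable warm-up and matches how the theorem is phrased. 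It would also be worth a sentence noting that a Poisson point process is often defined by a third condition (that the random measure is a.s.\ a locally finite integer-valued measure, or equivalently supported on a countable point set), which holds here trivially since $m<\infty$ a.s.\ and $K$ is compact; this is implicit in your argument but stating it closes the loop on "$P_\lambda$ is a Poisson point process."
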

	
	\begin{lemma}\label{lemma:poisson}
		For $n\geq 0$, $\P{\pois{2n}<n}\leq e^{-0.306n}.$
	\end{lemma}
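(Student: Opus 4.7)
The plan is to apply a standard Chernoff bound to the Poisson moment generating function. Writing $X \sim \pois{2n}$, for any $t>0$ Markov's inequality gives
\[ \P{X<n} = \P{e^{-tX}>e^{-tn}} \leq e^{tn}\,\mathbb{E}\!\left[e^{-tX}\right]. \]
Since the moment generating function of a $\pois{\lambda}$ variable evaluated at $-t$ is $e^{\lambda(e^{-t}-1)}$, with $\lambda=2n$ this yields
\[ \P{X<n} \leq \exp\bigl(\,n\,(t+2e^{-t}-2)\,\bigr). \]

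Next I would optimize the exponent in $t$. Setting the derivative $1-2e^{-t}$ equal to zero gives $t=\ln 2$, at which the exponent becomes $n(\ln 2 + 1 - 2) = -n(1-\ln 2)$. Since $1-\ln 2 = 0.30685\ldots > 0.306$, this produces
\[ \P{X<n} \leq e^{-(1-\ln 2)n} \leq e^{-0.306\,n}, \]
as required.

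There is essentially no obstacle here: the argument is the textbook Chernoff bound for the lower tail of a Poisson variable, and the numerical constant $0.306$ is just a convenient rational underestimate of $1-\ln 2$. The only thing to be slightly careful about is that the $n=0$ case is trivially fine (the probability is zero and the right-hand side is one), so the bound holds for all $n\geq 0$.
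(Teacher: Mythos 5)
Your proof is correct. Note that the paper itself does not supply a proof of this lemma; it is stated without proof and the reader is referred to \cite{Penrose2003} or \cite{Kingman1993}. Your argument is the standard Chernoff bound for the lower tail of a Poisson random variable: applying Markov's inequality to $e^{-tX}$, using the Poisson moment generating function $\mathbb{E}[e^{-tX}] = e^{2n(e^{-t}-1)}$, and optimizing over $t$ gives the minimizer $t = \ln 2$ and the exponent $-(1-\ln 2)n$, and since $1-\ln 2 = 0.3068\ldots > 0.306$ and $n\geq 0$, the claimed bound follows (with the $n=0$ case trivially holding). This is precisely the argument one would find in the cited references, so there is no substantive difference in approach.
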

	
	To make our proofs cleaner, we address the probabilistic part in the following lemma. 
	
	\begin{lemma}\label{lemma:poisson_empty}
		Let $K$ be a compact metric space pure $d$-dimensional. Let $D$ be a constant such that for each $n$, there are $N=N(n)$ disjoint closed sets $F_1, \dots, F_N\subset K$ such that
		\begin{enumerate}
			\item $\frac{\vol{F_r}}{\vol{K}}\leq \frac{\log n}{3n}$ for all $r=1,\dots, N$; and
			\item $N\geq \frac{D n}{\log n}$.
		\end{enumerate}
		Let $X_n=\{x_1,\dots, x_n\}$ be a collection of $n$ i.i.d. uniform random points on $K$ with corresponding counting measure $\eta_{1}^n$. Then 
		$$\P{\bigvee_{r=1}^N\eta_{1}^n(F_i)=0}\to1\text{ as }n\to\infty.$$
	\end{lemma}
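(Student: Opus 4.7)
The plan is to Poissonize so that the emptiness events on the disjoint $F_r$ decouple. I would couple $X_n$ with a Poisson point process by drawing i.i.d.\ uniform points $y_1, y_2, \ldots$ on $K$ and, independently, $m \sim \pois{2n}$; then set $X_n = \{y_1, \ldots, y_n\}$ and $P_{2n} = \{y_1, \ldots, y_m\}$. By Theorem \ref{thm:poissonization}, $P_{2n}$ is a Poisson point process on $K$ of intensity $2n/\vol{K}$, and on the event $\{m \geq n\}$ we have $X_n \subset P_{2n}$ as point sets.

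The core estimate is then a short Poisson calculation. Writing $\eta_{2n}$ for the counting measure of $P_{2n}$, disjointness of the $F_r$ makes $\eta_{2n}(F_1), \ldots, \eta_{2n}(F_N)$ independent, each Poisson with parameter
\[
\lambda_r := 2n\,\frac{\vol{F_r}}{\vol{K}} \leq \frac{2 \log n}{3}
\]
by hypothesis 1. Hence $\P{\eta_{2n}(F_r) = 0} = e^{-\lambda_r} \geq n^{-2/3}$, and combining with hypothesis 2,
\[
\P{\bigwedge_{r=1}^N \eta_{2n}(F_r) \geq 1} \leq \bigl(1 - n^{-2/3}\bigr)^N \leq \exp\!\left(-\frac{D\, n^{1/3}}{\log n}\right) \xrightarrow[n \to \infty]{} 0,
\]
since $n^{1/3}$ dominates $\log n$.

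To transfer this back to $X_n$ I would de-Poissonize using Lemma \ref{lemma:poisson}. On $\{m \geq n\}$, any point of $X_n$ lies in $P_{2n}$, so every $F_r$ meeting $X_n$ also meets $P_{2n}$; therefore
\[
\P{\bigwedge_{r=1}^N \eta_{1}^n(F_r) \geq 1} \leq \P{\bigwedge_{r=1}^N \eta_{2n}(F_r) \geq 1} + \P{m < n} \leq \exp\!\left(-\frac{D\, n^{1/3}}{\log n}\right) + e^{-0.306 n},
\]
and passing to the complement gives the claim. I do not foresee any serious obstacle; the only delicacy is choosing the Poissonization intensity (here $2n$) so that simultaneously each $\lambda_r$ is small enough for $e^{-\lambda_r}$ to beat $1/N$ by a super-logarithmic factor and $\P{\pois{2n} < n}$ remains exponentially small in $n$.
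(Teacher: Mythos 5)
Your proposal is correct and matches the paper's argument essentially line for line: Poissonize at intensity $2n$ to exploit spatial independence on the disjoint $F_r$, bound $\P{\eta_{2n}(F_r)=0}\geq n^{-2/3}$ via hypothesis 1, take the product and use hypothesis 2 to get $\exp(-Dn^{1/3}/\log n)\to 0$, then de-Poissonize with Lemma \ref{lemma:poisson}. The only cosmetic difference is that you spell out the coupling explicitly, but the decomposition, estimates, and de-Poissonization step are identical to the paper's.
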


	\begin{proof}
		We use the fact that the $F_r$'s are disjoint, by considering instead the \textit{Poissonization} of the random points, so that we get spatial independence. Hence let $x_1,x_2, \dots$ be uniform random variables on $K$, let $M\sim\pois{2n}$, and consider the Poisson Point Process $\{x_1,\dots, x_M\}$, as described in Theorem \ref{thm:poissonization}. Letting $\eta$ be the corresponding counting measure, we get
		
		\begin{align*}
			\P{\bigwedge_{r=1}^N\eta(F_r)>0} & =\prod_{r=1}^{N}\P{\eta(F_r)>0}\\
			& =\prod_{r=1}^{N}\left(1-\P{\pois{\frac{2n\vol{
							F_r}}{\vol{K}}}=0}\right)\\
			& =\prod_{r=1}^{N}\left(1-e^{-\frac{2n\vol{F_r}}{\vol{K}}}\right) \leq
			\left(1-e^{-(2/3)\log n}\right)^N\\
			& \leq \exp\left(-Nn^{-2/3}\right)\leq \exp\left(-\frac{Dn^{1/3}}{\log n}\right)\to 0\text{ as }n\to\infty.
		\end{align*}
		
		Thus $\P{\bigvee_{r=1}^N\eta(F_r)=0}\to1$ as $n\to\infty$. Clearly when $M\geq n$, $\eta(F_r)=0$ implies $\eta_{1}^n(F_r)=0$ as well. Thus, applying Lemma \ref{lemma:poisson}, we get 
		\begin{align*}
			\P{\bigvee_{r=1}^N\eta(F_r)=0}&\leq \P{\bigvee_{r=1}^N\eta_{1}^n(F_r)=0}+\P{M<n}\\
			& \leq\P{\bigvee_{r=1}^N\eta_{1}^n(F_r)=0}+o(1).
		\end{align*}
		From where the desired result follows. 	
	\end{proof}

	\begin{proof}[Proof of Sparse Case]
		Let $M$ be a geometric realization of $E_kG$ such that $G$ acts on $M$ via isometries. Then, let $\Phi:K\xrightarrow{G}M$ be the simplicial $G$-map and $C$ the corresponding constant given by Lemma \ref{lemma:g_map_lipschitz_volumes}. Let $\lambda$ be the Lipschitz constant of $\Phi$. 
		
		Let $L=M^{(k-1)}$ be the $(k-1)$-skeleton of $M$. Restricting the action of $G$ to $L$ we also get a geometric $G$-simplicial complex. Moreover, $\dim(L)=k-1$ and, since $M$ is $k$-connected, $L$ must be $(k-1)$-connected, thus $L$ is a $(k-1)$-classifying space of $G$, and $\cov{G}{L}=\cov{G}{k-1}$. Let $\epsilon_0>0$ be as in Theorem \ref{thm:rel_graph_cov}, that is, for any $X\subset L$ and any $0<\epsilon\leq\epsilon_0$, 
		$$\chi\left(\Ggraph{G}{X,\epsilon}\right)\leq\cov{G}{L}=\cov{G}{k-1}.$$ 
		
		Let $M_d$ be the set of $d$-faces of $M$. Let $d_1=(1-\rho)\min_{\sigma\in M_d}{\text{inradius}(\sigma)}$,
		for $\rho<1$ any fixed constant close to 1. And let $\kappa$ be the constant given by Theorem \ref{thm:homom_boundary}, corresponding to $\epsilon_0$ and $d_1$. Partition the $d$-faces of $M$ into $G$-orbits, and let $\calC=\{\sigma_1,\dots\sigma_m\}$ be a collection of one representative per orbit, where $m=|\calC|$. 
		
		We will prove that the constant $$C_2=\frac{1}{\kappa\lambda}\left(\frac{\vol[d]{K}}{3m|G|C\omega_k}\right)^{1/k},$$ where $\omega_k$ is the volume of the unit $k$-ball, is as stated in the theorem. That is, if $\epsilon(n)\leq C_2(\log n/n)^{1/k}$, then a.a.s. $\chi(\Ggraph{G}{n,\epsilon})\leq\cov{G}{L}$. 
		
		For a $d$-face $\sigma$, denote by $\rho\sigma$ the homothetic copy of $\sigma$ obtained by scaling it by a factor of $\rho$ with respect to its incenter. Note that $$\dist{\rho\sigma,\partial\sigma}=(1-\rho)\text{inradius}(\sigma)\geq d_1.$$
		
		Fix $n$ large enough so that $\epsilon$ satisfies $\kappa\lambda\epsilon\ll d_1$, and let $\delta=2\kappa\lambda\epsilon$. For each $1\leq i\leq m$, Lemma~\ref{lemma:delta-nets} guarantees there is a constant $\bar{C}(\rho\sigma_i)$ and a $\delta$-apart set $\calB_i=\{y_1^i,\dots, y_{|\calB_i|}^i\}\subset\rho\sigma_i$ such that, $$\frac{\bar{C}(\rho\sigma_i)}{\delta^k}\leq |\calB_i|.$$
		
		Let $\bar{C}=\min_i{\bar{C}(\rho\sigma_i)}$ and $N=\min_i |\calB_i|$, then $\bar{C}/\delta^k\leq N$.
		
		For each $r=1,\dots, N$ construct the set $Y_r$, as $$Y_r:=\{gy_r^i: g\in G, 1\leq i\leq m\}.$$
		
		Note this construction guarantees $Y_r=gY_r$ for all $g\in G$, it has one point in the interior of each $d$-face of $M$, and $\dist{Y_r,L}\geq d_1$. Thus, for each $1\leq r\leq N$, Theorem \ref{thm:homom_boundary} guarantees that there is a graph homomorphism $$\Ggraph{G}{M\setminus B(Y_r,\kappa\lambda\epsilon),\lambda\epsilon}\to\Ggraph{G}{L,\epsilon_0}.$$ 
		Note that because the points in $Y_r$ come from $\delta$-apart sets and $\kappa\lambda\epsilon<d_1$, then $B(Y_r,\kappa\lambda\epsilon)\cap B(Y_{r'},\kappa\lambda\epsilon)=\emptyset$ whenever $r\neq r'$, and $B(Y_r,\kappa\lambda\epsilon)$ is a disjoint union of $k$-balls. 
		
		For each $r=1,\dots N$, let $F_r=\Phi^{-1}\left(B(Y_r,\kappa\lambda\epsilon)\right)$, thus by Lemma \ref{lemma:g_map_lipschitz_volumes}, we get
		\begin{align*}
			\vol[d]{F_r}&=\sum_{y\in Y_r}\vol[d]{\Phi^{-1}\left(B(y,\kappa\lambda\epsilon)\right)}
			\leq \sum_{y\in Y_r}C\vol[k]{B(y,\kappa\lambda\epsilon)}\\
			&= |Y_r|C\omega_k(\kappa\lambda\epsilon)^k
			\leq m|G|C\omega_k(\kappa\lambda C_2)^k\left(\frac{\log n}{n}\right)\\
			&\leq \frac{\vol[d]{K}\log n}{3n}.
		\end{align*}
		
		So $\frac{\vol[d]{F_r}}{\vol[d]{K}}\leq\frac{\log n}{3n}$ for all $r=1,\dots N$, and we also have 
		$$N\geq\frac{\bar{C}}{(2\kappa\lambda\epsilon)^k}\geq\frac{\bar{C}}{(2\kappa\lambda C_2)^k}\cdot\frac{n}{\log n}=C'\frac{n}{\log n}.$$
		
		Thus we can apply Lemma \ref{lemma:poisson_empty} to $X_n\subset K$, $n$ i.i.d. uniform random points on $K$, so a.a.s. there exists some index $r$ such that $X_n\cap F_r=\emptyset$. Finally, since $\Phi$ is a Lipschitz $G$-map, by Theorem~\ref{thm:properties_cov} item 2, it induces a graph homomorphism, and so we have the following composition of graph homomorphisms
		\begin{align*}
			\Ggraph{G}{X_n,\epsilon}\hookrightarrow\Ggraph{G}{K\setminus F_r,\epsilon}\xrightarrow{\Phi}\Ggraph{G}{M\setminus B(Y_r,\kappa\lambda\epsilon),\lambda\epsilon}\to\Ggraph{G}{L,\epsilon_0}
		\end{align*}
		
		So we get the desired result		
		$$\chi\left(\Ggraph{G}{n,\epsilon}\right)\leq\chi\left(\Ggraph{G}{L,\epsilon_0}\right)\leq\cov{G}{L}=\cov{G}{k-1}.$$	
	\end{proof}
	
	\section{Further Questions}
	\begin{itemize}
		\item 	While we have some computer-aided examples suggesting that Conjecture \ref{conj:cov_d} might be true, it is still open. Thus, natural follow up questions would be to prove or disprove Conjectures \ref{conj:smaller_conjectures} and \ref{conj:cov_d}. Towards this end, it would be interesting to produce $G$-covers for more small groups in dimensions 2 and 3. In particular, it would be useful to be able to compute $\cov{\Z_m}{2}$ and $\cov{\Z_m}{3}$ with more efficient algorithms, rather than solving an integer programming problem.
		\item  	For the random $G$-Borsuk graphs, we wonder whether there exist similar thresholds to that in Theorem \ref{thm:chromatic_random_intro} for each possible chromatic number. We state this question as a conjecture. 
		
		\begin{conj}
			Let $G$ be a finite group, and $K$ a geometric $E_dG$ space. Then, for each $1\leq k\leq d+1$, there exist constants $C_k, \bar{C}_k$ and functions $f_k:\N\to\R^+$, such that:
			\begin{center}
				If $C_kf_k(n)\leq\epsilon(n)\leq\bar{C}_kf_{k+1}(n)$, then a.a.s.\ $\chi\left(\Ggraph{G}{n,\epsilon}\right)=\cov{G}{k}$. 
			\end{center}
		\end{conj}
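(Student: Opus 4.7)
My guess is that $f_k(n)=(\log n/n)^{1/k}$ for $1\le k\le d$, extending the threshold already appearing in Theorem \ref{thm:chromatic_random}, with $f_{d+1}$ taken to be a sufficiently large constant so that the statement for $k=d$ agrees with the dense case of that theorem. With this choice, the conjecture reduces to two statements for each $k$: an upper bound $\chi(\Ggraph{G}{n,\epsilon})\le\cov{G}{k}$ when $\epsilon\le\bar{C}_k(\log n/n)^{1/(k+1)}$, and a matching lower bound $\chi(\Ggraph{G}{n,\epsilon})\ge\cov{G}{k}$ when $\epsilon\ge C_k(\log n/n)^{1/k}$. The upper bound for $k=d-1$ already recovers the sparse case of Theorem \ref{thm:chromatic_random}.

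For the upper bound I would generalize the argument of Section \ref{section:random_g_graphs}, replacing the $0$-dimensional ``hole'' $Y_r$ that was removed inside each top-dimensional cell by a $G$-equivariant subcomplex $Y\subset K$ of codimension $k+1$. Using the join decomposition $E_dG=G^{*(k+1)}*G^{*(d-k)}$, natural candidates for $Y$ are $G$-equivariant translates of the $(d-k-1)$-dimensional join factor $G^{*(d-k)}$: their $\kappa\epsilon$-tubular neighborhoods have $d$-volume of order $\epsilon^{k+1}$, and one can fit on the order of $\epsilon^{-(k+1)}$ of them in pairwise disjoint position. The Poissonization step of Lemma \ref{lemma:poisson_empty} then guarantees that at least one candidate has empty tubular neighborhood provided $n\epsilon^{k+1}=O(\log n)$, i.e.\ $\epsilon\le\bar{C}_k(\log n/n)^{1/(k+1)}$. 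Deformation retracting $K$ minus that neighborhood onto a $k$-dimensional $G$-subcomplex of $K$ then gives a Lipschitz $G$-map to an $E_kG$-space, which by Theorem \ref{thm:properties_cov} and Theorem \ref{thm:rel_graph_cov} produces the chromatic bound $\cov{G}{k}$.

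For the lower bound the plan is to force $\ind{G}{\hom(K_{|G|},\Ggraph{G}{n,\epsilon})}\ge k$, which by the argument of Theorem \ref{thm:lower_bound_cov} forces $\chi\ge|G|+k=\cov{G}{k}$ under Conjecture \ref{conj:cov_d}. I would build a $G$-map $\Phi:E_kG\xrightarrow{G}\hom(K_{|G|},\Ggraph{G}{n,\epsilon})$ by taking a fine $G$-simplicial triangulation $T$ of $E_kG$, embedding it $G$-equivariantly into $K$ via some $\iota$, and sending each vertex $v$ of $T$ to the cell $(\tilde v_{g_1},\dots,\tilde v_{g_{|G|}})$, where each $\tilde v_{g_i}$ is a random vertex of $\Ggraph{G}{n,\epsilon}$ close to $g_i\cdot\iota(v)$. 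The skeletal induction of Section \ref{section:lower_bound} would then extend $\Phi$ to all of $T$ using the contractibility of each $\Psi_\sigma$, and Lemma \ref{lemma:dim_hom_km_kd} supplies the dimension count that pushes the index estimate through.

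The main obstacle is the probabilistic step in this lower bound. A naive union bound asks that, for every vertex $v$ of $T$ and every $g\in G$, the $\epsilon$-ball around $g\cdot\iota(v)$ contain at least one random vertex, and since these balls have $d$-volume of order $\epsilon^d$ the union bound requires $n\epsilon^d\to\infty$, which fails when $\epsilon\approx(\log n/n)^{1/k}$ and $k<d$. Closing this gap is the crux of the conjecture. Two natural avenues are to exploit the large moduli of $G$-equivariant embeddings $\iota$ via a second-moment argument, or to work with coarser approximations of $\Phi$ that use any random vertex within an $\epsilon$-thick neighborhood of $\iota(E_kG)$, whose expected count $n\epsilon^{d-k}$ does grow with $n$; either route demands that the $G$-equivariance of the skeletal extension survive these coarser choices, which is where I would expect the real difficulty to lie.
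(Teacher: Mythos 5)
This statement is an open conjecture in the paper's ``Further Questions'' section, not a theorem: the paper gives no proof, and only remarks that Theorem \ref{thm:chromatic_random_intro} settles the $k=d$ case by taking $f_d(n)=(\log n/n)^{1/d}$ and $f_{d+1}$ constant. So there is no paper argument to compare yours against, and your proposal should be judged as an attempt at an open problem.

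Your guess $f_k(n)=(\log n/n)^{1/k}$ is the natural one and matches the paper's remark. Your upper-bound sketch is a plausible extension of the Section~\ref{section:random_g_graphs} machinery: replacing the $0$-dimensional hole per $d$-cell by translates of a $(d-k-1)$-dimensional $G$-equivariant subcomplex does give tubular neighborhoods of $d$-volume $\Theta(\epsilon^{k+1})$, and the Poissonization from Lemma \ref{lemma:poisson_empty} does produce an empty tube once $n\epsilon^{k+1}=O(\log n)$. This would still need a $G$-equivariant Lipschitz retraction of the complement onto a $k$-dimensional $G$-subcomplex (the analogue of Theorem \ref{thm:homom_boundary}), which is nontrivial but believable.

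The lower bound is where your proposal does not close, as you yourself concede. Two specific problems. First, the coarse approach you float does not escape the obstruction: if the triangulation $T$ of $E_kG$ has mesh $\sim\epsilon$, it has $\sim\epsilon^{-k}$ vertices, and matching each to a random vertex inside the $\epsilon$-tube around $\iota(E_kG)$ requires on average $n\epsilon^{d-k}\gg\epsilon^{-k}$, i.e.\ $n\epsilon^d\gg 1$, which is exactly the condition you were trying to avoid. Second, the side claim that $n\epsilon^{d-k}$ ``does grow with $n$'' is false in general: with $\epsilon\asymp(\log n/n)^{1/k}$ one has $n\epsilon^{d-k}\asymp n^{(2k-d)/k}(\log n)^{(d-k)/k}$, which tends to $0$ whenever $2k<d$. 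So neither avenue, as stated, repairs the probabilistic step. This gap is precisely why the paper leaves the statement as a conjecture; your write-up correctly locates the difficulty but does not constitute a proof.
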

		
		In particular, Theorem \ref{thm:chromatic_random_intro} shows the conjecture is true for $k=d$, taking $f_d(n)=\left(\log n/n\right)^{1/d}$ and $f_{d+1}(n)=\epsilon_0$ from Theorem \ref{thm:rel_graph_cov}. 
	\end{itemize}
	
	\section{Acknowledgments}
	The author wants to thank his advisor Matthew Kahle, for fruitful conversations, frequent words of encouragement, and for kindly proof-reading and suggesting improvements to parts of this paper. 
	
	\appendix
	\section{Computer-aided Examples}
	In this appendix we describe the approach taken to compute the values of Table \ref{table:covering_numbers} and the graphs on Figure \ref{fig:upper_bounds}. Let us start by describing a general method for any finite group $G$, and then we will discuss our specific approach in dimensions 2 and 3 for cyclic groups. 
	
	\subsection*{General approach}
	
	\begin{enumerate}
		\item Suppose $L$ is a $E_dG$ classifying space where $G$ acts via isometries. Moreover, suppose we know $\cov{G}{d}=|G|+d$ and we have a corresponding closed $G$-cover of $L$.  
		\item Take $K=G*L$, so $K$ is a $E_{d+1}G$ classifying space, and $G$ acts on $K$ via isometries.
		\item Let $\Delta$ be a \textit{subdivision operation} that respects the $G$-action, that is, an algorithm that takes $K$ and produces a $G$-simplicial complex $\Delta K$ with the same geometric realization $\|\Delta K\|=\|K\|$, with $V(K)\subset V(\Delta K)$, and such that the action of $G$ restricted to $\Delta K$ is again a $G$-action via simplicial maps. Denote by $\Delta_k K$ the repeated application of $\Delta$. Note that we can realize $L\hookrightarrow K$, so $\Delta$ restricted to $L$ also produces a subdivision of $L$
		\item For $T=\Delta_k K$, let $H(T)$ be the graph with vertices $V(T)$ and edges $u\sim v$ whenever there is a $g\in G$, $g\neq\1$ s.t. $\{u,gv\}\in T$. Similarly, we define $H(T_0)$ for $T_0=\Delta_k L$. 
		\item Let $k$ be the minimum integer such that the known $G$-cover for $L$ produces a proper graph coloring for $H(T_0)$, where $T_0=\Delta_k L$. Thus $\chi(H(T_0))=\cov{G}{d}=|G|+d$. 
		\item Let $c_0:V(T_0)\to\{1,\dots, |G|+d\}$ be this coloring.  
		\item Let $T=\Delta_k K$, and $H=H(T)$. 
		We define a partial coloring $c:V(T)\to\{1,\dots |G|+d, |G|+d+1\}$, as follows 
		$$c(v)=\begin{cases}
			c_0(v_0)&\text{ if } v_0 \in T\cap L\\
			|G|+d+1 &\text{ if } v=t\1\oplus (1-t)v_0\text{ for some }t>0, v_0\in V(T_0)
		\end{cases}$$
		\item We then attempt to extend this partial coloring to the remaining vertices of $H$: $\{tg\oplus (1-t)v_0: g\in G, g\neq\1, v_0\in V(T_0), 0<t\leq 1\}$. We do this by solving the corresponding integer programming problem. If this is possible, we continue with the next step. Otherwise, we increase $k$ by one and repeat steps 6-7. 
		\item If the partial coloring can be extended to a proper coloring for $H$, Theorem \ref{thm:general_upper_bound} guarantees $\cov{G}{d+1}\leq \chi(H)=|G|+d+1$, and as in its proof, coloring the Barycentric Subdivision with the  corresponding colors of the vertices of $T$, provides a $G$-cover for $K$. 
		\item This whole process may be repeated for dimension $d+2$, using the $G$-cover of $K$ found as the starting point. 
	\end{enumerate}
	
	\subsection*{For Cyclic Groups}
	\subsubsection*{2D Case}
	
	Let $\Z_m=\{\1, \nu, \nu^2, \dots, \nu^{m-1}\}$ be a cyclic group of order $m$. Let $C_m$ be the cycle of length $m$, with its vertices labeled by the elements of $\Z_m$. It is convenient to think of the rational points of $C_m$ as labeled by elements in the module $\Q[\Z_m]$. That is, the rational points in the segment $[\nu^i, \nu^{i+1}]$ can be described as $\alpha\nu^i + (1-\alpha)\nu^{i+1}$, for $0\leq\alpha\leq 1$, $\alpha\in \Q$. Thus, taking left multiplication in the module $\Q[\Z_m]$ corresponds to the $\Z_m$-action on $\|C_m\|$. 
	
	\begin{enumerate}
		\item We take $E_1\Z_m=\S^1=C_m$, so $\cov{\Z_m}{1}=m+1$ and can be realized by dividing $\S^1$ into $(m+1)$-arcs of the same size. 
		\item Consider $E_2=\Z_m*C_m$. Its vertices are of the form $\alpha g\oplus (1-\alpha) g$ for $g\in G$ and $\alpha\in\{0,1\}$. Similarly, we can label all rational points in $\|\Z_m*C_m\|$ by elements of the module $\Q[\Z_m]^2$. That is, $x=\alpha g \oplus (1-\alpha) b$ for $0\leq\alpha\leq 1$, $\alpha\in\Q$, $g\in \Z_m$ and $b$ a rational point in $\|C_m\|$. 
		\item For the subdivision operation, we take $\Delta$ to be the \textit{medial triangle subdivision}. This process subdivides each triangle as follows:
		\begin{enumerate}
			\item Take a triangle $\{v_1,v_2,v_3\}$, with labels $v_i=a_i\oplus b_i$ for $a_i, b_i\in \Q[\Z_m]$. 
			\item Let $v_{ij}=\frac{v_i+v_j}{2}=\frac{a_i+a_j}{2}\oplus\frac{b_i+b_j}{2}$.
			\item Then, return the 4 triangles $\{v_1, v_{12}, v_{13}\}$, $\{v_2,v_{12},v_{23}\}$, $\{v_3,v_{13},v_{23}\}$ and $\{v_{12},v_{23},v_{13}\}$. 		
		\end{enumerate}
		Notice that this process still produces a triangulation where all the vertices are rational points of $\|\Z_m*C_2\|$. Moreover, it clearly respects the $\Z_m$-action and when restricted to $\|C_m\|$, it simply halves each segment. 
		\item We then follow steps 4-9 of the \textit{general approach}. In our examples for $m=2, \dots, 6$, we took values of $k=3$ or $4$. 
	\end{enumerate}
	
	Since $\|Z_m*C_m\|$ can be seen as $m$ independent disks identified by their boundaries, we can plot the vertices of $T$ into $m$ unit disks with centers at the points $(R\cos(2\pi k/m), R\sin(2\pi k/m))$ for some large radius $R$. Then, we color a fine mesh of points in the disks with the color of their closest vertex to produce Figure~\ref{fig:upper_bounds}. 
	
	\subsubsection*{3D Case for $\bm{\Z_3}$}
	
	So far, for the 3-dimensional case we have only been able to compute $\cov{\Z_3}{3}$. For this we use the $E_2\Z_3$ space and the $\Z_2$-cover described before. Just as before, we can think of labeling all its rational points by elements of the module $\Q[\Z_m]$. We then apply the \textit{general approach} to find a $\Z_3$-cover of $E_3\Z_3=\Z_3*E_2\Z_3=(\Z_3^{*2})*C_3$, by choosing an appropriate subdivision procedure. For this, we subdivide each tetrahedron into 4 \textit{medial} tetrahedra and one central octahedron, that we further subdivide into 8 tetrahedra. That is, we proceed as follows:
	\begin{enumerate}
		\item Take a tetrahedron $\{v_1, v_2, v_3, v_4\}$. 
		\item For each pair of indices $1\leq i<j\leq 4$, define the point $v_{ij}=\frac{v_i+v_j}{2}$.
		
		Define the point $w=\frac{v_1+v_2+v_3+v_4}{4}$.  
		\item Then, return the 12 tetrahedra:
		\begin{align*}
			\{v_1,v_{12},v_{13},v_{14}\}, & 
			\{v_{2}, v_{12}, v_{23}, v_{24}\},  \{v_{3},v_{13},v_{23},v_{34}\}, \{v_4,v_{14},v_{24},v_{34}\} \\
			\{v_{12},v_{13},v_{14},w\}, & 
			\{v_{12},v_{23},v_{24},w\},
			\{v_{13},v_{23},v_{34},w\},
			\{v_{14},v_{24},v_{34},w\},\\
			\{v_{12},v_{14},v_{24},w\},&
			\{v_{12},v_{13},v_{23},w\},
			\{v_{13},v_{14},v_{34},w\},
			\{v_{23},v_{24},v_{34},w\}
		\end{align*}
	\end{enumerate}
	Notice that this procedure restricted to $E_2\Z_3$ is just the medial triangle subdivision used in the 2-dimensional case. Moreover, $\Delta K$ is still a $\Z_3$-simplicial complex. Thus we can finish this case by following steps 4-9 of the \textit{general approach}. In our computations, we use $k=3$, which produces a graph with 9,129 vertices, however, solving the related integer programming problem in \cite{gurobi} is almost immediate. 
	

	\bibliographystyle{halpha-abbrv}
	{\small
	\bibliography{References}}
\end{document}